\documentclass{amsart}
\usepackage{algorithm}
\usepackage{algorithmic}
\usepackage{amsmath,amsthm}
\usepackage{verbatim}
\usepackage{marvosym}
\usepackage{graphicx}
\usepackage{color}
\usepackage{epsfig}
\usepackage{rotating}
\usepackage{lscape}
\usepackage{bm}
\usepackage{mathrsfs, euscript, mathdesign}
\usepackage[normalem]{ulem}
\usepackage{cleveref}
    \usepackage{baskervald}
\usepackage[T1]{fontenc}

\usepackage{tikz}
\usetikzlibrary{cd}
\usepackage{thmtools}
\usepackage{thm-restate}

\theoremstyle{plain}
\newtheorem{theorem}{Theorem}[section]
\newtheorem{cor}[theorem]{Corollary}

\newtheorem{prop}[theorem]{Proposition}

\newtheorem{example}[theorem]{Example}

\theoremstyle{definition}

\newtheorem{remark}[theorem]{Remark}

{\kern -4pt}

%%%%%%%%%
\makeatletter
\newcommand*{\house}[1]{%
  \mathord{%
    \mathpalette\@house{#1}%
  }%
}
\newcommand*{\@house}[2]{%
  % #1: math style
  % #2: expression that gets the "house"
  % get the line width of `\overline' in the current math font size
  \dimen@=\fontdimen8 %
      \ifx#1\scriptscriptstyle\scriptscriptfont
      \else\ifx#1\scriptstyle\scriptfont
      \else\textfont\fi\fi
      3 %
  \sbox0{%
    $#1%
      \vrule width\dimen@\relax
      \overline{%
        \kern2\dimen@
        \begingroup % to keep changes of \dimen@ in #2 local
          #2%
        \endgroup
        \kern2\dimen@
      }%
      \vrule width\dimen@\relax
      \mathsurround=1.5\dimen@ % outside margin
    $%
  }%
  % TeX adds an empty space above `\overline', it needs to be
  % removed to get the correct height for the `\vrule's
  \ht0=\dimexpr\ht0-\dimen@\relax
  \dp0=\dimexpr\dp0+2\dimen@\relax
  \vbox{%
    \kern\dimen@ % reinsert previously removed space
    \copy0 
  }%
}

%%%%%%%%%%%%

%\newcommand{\comment}[1]{}

%{N\!d}%{N\!b\!h\!d}

\newcommand{\Ox}{\ensuremath{\mathcal{O}}}

\newcommand{\Q}{\ensuremath{\mathbb{Q}}}
\newcommand{\R}{\ensuremath{\mathbb{R}}}
\newcommand{\Z}{\ensuremath{\mathbb{Z}}}
\newcommand{\C}{\ensuremath{\mathbb{C}}}

\newcommand{\kate}[1]{{\color{blue}{#1}}}

\newcommand{\bd}{2\cdot 10^7}

\newcounter{nootje}
\setcounter{nootje}{1}

\begin{document}

\title{Minimal Mahler Measure in Quartic Galois Number Fields}

\author{ Bishnu Paudel, Kathleen Petersen and Haiyang Wang}

\thanks{The second author was supported by an AMS-Simons PUI Grant}

\keywords{Mahler Measure, Weil Height, Quartic Number Fields}

\subjclass{11G50, 11R06, 11C08}

%%%%%%%%%%%%%%%%%%%%%%%%%%%%%%%%%%%%%%%%%%%%%%%%%%%%%%%%
%%%%%%%%%%%%%%%%%%%%%%%%%%%%%%%%%%%%%%%%%%%%%%%%%%%%%%%%
\begin{abstract}
We explore the dependence of the minimal integral Mahler measure of Galois quartic fields on the discriminant of the field.  We obtain density results which are conditional on the ABC conjecture as well as several unconditional results.

 \end{abstract}
%%%%%%%%%%%%%%%%%%%%%%%%%%%%%%%%%%%%%%%%%%%%%%%%%%%%%%%%
%%%%%%%%%%%%%%%%%%%%%%%%%%%%%%%%%%%%%%%%%%%%%%%%%%%%%%%%

\maketitle

%%%%%%%%%%%%%%%%%%%%%%%%%%%%%%%%%%%%%%%%%%%%%%%%%%%%%%%%
%%%%%%%%%%%%%%%%%%%%%%%%%%%%%%%%%%%%%%%%%%%%%%%%%%%%%%%%
\section{Introduction}
%%%%%%%%%%%%%%%%%%%%%%%%%%%%%%%%%%%%%%%%%%%%%%%%%%%%%%%%
%%%%%%%%%%%%%%%%%%%%%%%%%%%%%%%%%%%%%%%%%%%%%%%%%%%%%%%%

For a number field $K$ we let $D_K$ be the absolute discriminant of $K$ and $\Ox_K$ the ring of integers of $K$. 
The Mahler measure of  a non-constant  polynomial $f(x)=c \prod_{i=1}^d (x-\alpha_i)\in \C[x]$ is  
\[
M(f) = |c| \prod_{ |\alpha_i|\geq 1} |\alpha_i|,
\]
and for an algebraic number $\alpha$, we define $M(\alpha)$ to be the Mahler measure of a minimal polynomial for $\alpha$ over $\Z$ (with content 1).  
The minimal Mahler measure of a number field $K$ is the minimal Mahler measure of a generator
\[
M(K) = \min\{ M(\alpha): \Q(\alpha) = K \},
\]
and $M(\Ox_K)$ is the minimal Mahler measure of an integral generator. 
We study the dependence of  $M(\Ox_K)$ on  the discriminant $D_K$ for Galois quartic number fields $K$, focusing on the exponent of the discriminant in this dependence. 
The main tools we use are Liouville's theorem in Diophantine approximation and Granville's work on square-free values of polynomials.  Granville's work depends on the ABC conjecture in many cases and we use these square-free values to construct explicit number fields. The known bounds for $M(\Ox_K)$ specialize to the following for Galois quartics (see Section~\ref{section:background}). If $K$ is a totally imaginary quartic and $\text{Tor}(K^{\times})\neq \{\pm 1\}$ then   
\begin{equation}\tag{$*$}
2^{-\frac{12}5} D_K^{\frac15} \leq M(\Ox_K) \leq  \big(\tfrac{2}{\pi}\big)^{2} D_K^{\frac1{2}}
\end{equation} 
and if  $\text{Tor}(K^{\times})=\{\pm 1\}$ then 
\begin{equation}\tag{$**$}
2^{-\frac{12}5} D_K^{\frac15} \leq M(\Ox_K) \leq D_K.
\end{equation} 
If $K$ is a  totally real quartic field then 
\begin{equation}\tag{$***$}
2^{-\frac43} D_K^{\frac16}  \leq M(\Ox_K) \leq  D_K^{\frac12}.
\end{equation}

We now summarize our main results, which differ depending on the Galois group of $K$ and whether $K$ is totally imaginary or totally real. 
For totally real biquadratic fields, assuming the ABC conjecture, we show that  all possible rational exponents are realized for infinitely many fields. 
\begin{theorem}\label{allexpreal}
Let $\tfrac16 \le \tfrac{p}{q} \le \tfrac12$ be a rational number. There are absolute  constants $c_1, c_2>0$ such that assuming the ABC conjecture there are infinitely many totally real biquadratic fields $K$ for which 
\[ c_1 D_K^{\frac{p}{q}} \le M(\mathcal{O}_K) \le c_2 D_K^{\frac{p}{q}}.\]
\end{theorem}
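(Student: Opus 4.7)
The plan is, for each rational $p/q \in [\tfrac{1}{6},\tfrac{1}{2}]$, to construct a one-parameter family of totally real biquadratic fields $K_n = \Q(\sqrt{a(n)},\sqrt{b(n)})$ with $a,b \in \Z[x]$ chosen so that both $D_{K_n}$ and $M(\mathcal{O}_{K_n})$ are sandwiched between matching powers of $n$ whose exponents have ratio $p/q$. By the conductor--discriminant formula, $D_{K_n}$ is, up to bounded factors of $2$, the product of the discriminants of the three quadratic subfields $\Q(\sqrt{a(n)})$, $\Q(\sqrt{b(n)})$, and $\Q(\sqrt{a(n)b(n)})$, so pinning down its exact order of magnitude requires $a(n)$, $b(n)$ and $a(n)b(n)$ to be simultaneously squarefree. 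Conditional on the ABC conjecture, Granville's theorem on squarefree values of polynomials furnishes infinitely many $n$ for which all three squarefree conditions hold, giving $D_{K_n} \asymp n^{2(\deg a + \deg b)}$.

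For the upper bound I would exhibit an explicit integral generator $\alpha_n \in \mathcal{O}_{K_n}$ whose four conjugates $\pm\sqrt{a(n)}\pm\sqrt{b(n)}$ (possibly shifted and multiplied by units) yield a computable Mahler measure. The element $\sqrt{a(n)}+\sqrt{b(n)}$ already handles the upper portion of the interval: all four conjugates exceed $1$ for $n$ large, giving $M \asymp (a(n)-b(n))^2$, and by varying the degrees of $a$ and $b$ every rational exponent in $[\tfrac{1}{4},\tfrac{1}{2}]$ can be realized. To reach the harder range $[\tfrac{1}{6},\tfrac{1}{4}]$ I would exploit Richaud--Degert type units $A+\sqrt{A^2+1}$ in one or more of the three quadratic subfields of $K_n$, forming integral products whose small conjugates are coordinated so that the resulting generator has Mahler measure a genuinely smaller power of $n$ than $D_{K_n}^{1/4}$.

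For the lower bound $M(\mathcal{O}_{K_n}) \ge c_1 D_{K_n}^{p/q}$, which is not supplied by $(***)$ when $p/q > \tfrac{1}{6}$, I would invoke Liouville's theorem on Diophantine approximation. For any integer generator $\beta$ of $K_n$, expanding $\beta$ in the integral basis involving $1, \sqrt{a(n)}, \sqrt{b(n)}, \sqrt{a(n)b(n)}$ and symmetrizing over the four conjugates of $\beta$ yields rational approximations of controlled height to the quadratic irrationals $\sqrt{a(n)}$, $\sqrt{b(n)}$ and $\sqrt{a(n)b(n)}$, and the Liouville inequality $|\theta - r/s| \ge C(\theta)/s^{2}$ for each such $\theta$ forces at least one conjugate of $\beta$ to be large enough to produce the required lower bound on $M(\beta)$.

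The hardest step is reaching the lower end of the interval. Generators with Mahler measure of order $D_{K_n}^{1/6}$ must have three of their four conjugates small relative to the fourth, which corresponds to a short vector in the rank-three unit log-lattice of $K_n$ built explicitly from polynomially growing units in the three quadratic subfields. Verifying simultaneously that the constructed element is a primitive generator of $K_n$ (rather than lying in a proper subfield), and that the Liouville-type lower bound matches the explicit upper bound up to absolute constants uniformly in $n$, is where the main technical effort concentrates.
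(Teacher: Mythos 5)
Your overall architecture --- Granville's conditional squarefree theorem to control $D_K$, an explicit integral generator for the upper bound, and Liouville's inequality for the lower bound --- is exactly the paper's. But your choice of family breaks the argument on the lower half of the exponent range. You take $K_n=\Q(\sqrt{a(n)},\sqrt{b(n)})$ and require $a(n)$, $b(n)$ and $a(n)b(n)$ to be simultaneously squarefree, which forces $\gcd(a(n),b(n))=1$. For such fields the very Liouville argument you plan to use for the lower bound already gives $M(\mathcal{O}_{K_n})\ge c\,\max(a,b)\ge c\sqrt{ab}\ge c'D_{K_n}^{1/4}$ (this is Corollary~\ref{cor:realbiquadlis1}), so \emph{no} integral generator of such a field has Mahler measure of order $D_{K_n}^{p/q}$ with $p/q<\tfrac14$. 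Richaud--Degert units and short vectors in the unit log-lattice cannot circumvent this, since the obstruction applies to every primitive integral element of the field. The paper escapes it by working with the three-parameter family $\Q(\sqrt{ml},\sqrt{nl})$, where the common factor $l=l(k)$ is itself a large power of $k$: then $D_K\asymp(lmn)^2$ while Liouville only forces $M(\mathcal{O}_K)\gtrsim n$, and tuning the three degrees $2r\le 2s\le 2t$ realizes every exponent $\tfrac{t}{2(r+s+t)}\in[\tfrac16,\tfrac12]$.

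Second, the generator $\sqrt{a(n)}+\sqrt{b(n)}$ does not achieve the minimum except at exponent $\tfrac14$. When all four conjugates exceed $1$ its Mahler measure is $(a-b)^2$, whereas the Liouville lower bound is of order $\max(a,b)$; these match only when $(a-b)^2\asymp\max(a,b)$, which forces $a\asymp b$ and pins the exponent at $\tfrac14$ (and at exponent $\tfrac12$, with $b$ bounded, $(a-b)^2\asymp D_K$ rather than $D_K^{1/2}$). To realize the other exponents one needs a generator with three small conjugates; the paper takes $\alpha_1=\lfloor\sqrt{mn}\rfloor+b'\sqrt{ml}+c'\sqrt{nl}+\sqrt{mn}$ with integer coefficients $b'=k^{t-r}$, $c'=k^{s-r}$ and Eisenstein polynomials such as $l=x^{2r}+2$, $m=x^{2s}+2x^{2s-2r}+2$, $n=x^{2t}+2x^{2t-2r}+2x^{2t-2s}+2$, engineered so that $b'^2ml-c'^2nl$ and $c'^2nl-mn$ are tiny relative to $\sqrt{mn}$; then three conjugates are $O(k^{t-s})$ and $M(\alpha_1)\asymp k^{2t}\asymp D_K^{p/q}$, matching the lower bound. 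The "shift and unit" you allow parenthetically is in fact the entire construction, and it must be coordinated with the squarefree polynomials from the outset rather than added afterwards.
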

\noindent
In Corollaries~\ref{cor:realbiqadratic16}, \ref{cor:realbiqadratic14}, and \ref{cor:realbiqadratic12} we  produce explicit unconditional examples for exponents $\tfrac{p}{q} = \tfrac{1}{6}, \tfrac{1}{4}$ and $\tfrac{1}2$.

In Theorem~\ref{real>14} we improve the theoretical lower bound for totally imaginary biquadratics by increasing the exponent from $\tfrac15$ to $\tfrac14$ .  Assuming the ABC conjecture, we show that all possible rational exponents are realized for infinitely many fields. 
\begin{theorem}\label{thm:conditionalimaginary}
Let $\tfrac14 \le \tfrac{p}{q} \le 1$ be a rational number. There are absolute  constants $c_1, c_2>0$ such that assuming the ABC conjecture there are infinitely totally imaginary biquadratic fields $K$ for which 
\[ c_1 D_K^{\frac{p}{q}} \le M(\mathcal{O}_K) \le c_2 D_K^{\frac{p}{q}}.\]
\end{theorem}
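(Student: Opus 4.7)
The argument follows the blueprint of Theorem~\ref{allexpreal}, adapted to the totally imaginary setting. For a rational $\tfrac{p}{q} \in [\tfrac14,1]$, the plan is to construct a one-parameter family $K_n = \Q(\sqrt{-a_n}, \sqrt{-b_n})$ with $a_n, b_n$ polynomial values in $n \in \N$ of comparable size, so that the conductor-discriminant formula gives $D_{K_n} \asymp (a_n b_n)^2 \asymp n^4$ up to small congruence-dependent factors. Each field is then equipped with the explicit integral generator $\alpha_n = A_n + \sqrt{-a_n} + \sqrt{-b_n}$, where $A_n \asymp n^t$ is a tunable shift. The four Galois conjugates $A_n \pm i\sqrt{a_n} \pm i\sqrt{b_n}$ give the closed form
\[
M(\alpha_n) = \bigl(A_n^2 + (\sqrt{a_n}+\sqrt{b_n})^2\bigr)\bigl(A_n^2 + (\sqrt{a_n}-\sqrt{b_n})^2\bigr)
\]
whenever all roots have modulus at least one; sweeping $t$ through $[0,1]$ lets $M(\alpha_n)$ range through every order $n^{4p/q}$, realizing every target exponent in $[\tfrac14,1]$.

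Two classical inputs drive the strategy. First, Granville's conditional theorem on squarefree values of polynomials---where the ABC hypothesis enters---guarantees that $a_n$ and $b_n$ are simultaneously squarefree for infinitely many $n$, so that the three quadratic subfields of $K_n$ attain their predicted discriminants and $D_{K_n} \asymp n^4$ is pinned down. Second, Liouville's theorem in Diophantine approximation underpins the lower bound on the Mahler measure of an arbitrary integral generator of $K_n$. With Granville's input in hand, the upper bound $M(\mathcal{O}_{K_n}) \le M(\alpha_n) \le c_2 D_{K_n}^{p/q}$ is immediate from the construction and the tuned choice of $A_n$.

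The main obstacle is the lower bound $M(\mathcal{O}_{K_n}) \ge c_1 D_{K_n}^{p/q}$ for $p/q > \tfrac14$, since the universal estimate of Theorem~\ref{real>14} only delivers $c\, D_K^{1/4}$ and does not distinguish among fields. To close this gap for the constructed family one parameterizes an arbitrary integral generator $\beta$ via a fixed integral basis of $\mathcal{O}_{K_n}$, splits into cases according to which coordinate of $\beta$ dominates, and applies a Liouville-type estimate to show that the Galois conjugates of $\beta$ must spread far enough apart to force $M(\beta) \ge c_1 D_{K_n}^{p/q}$. The chief technical difficulty is controlling the coordinate along $\sqrt{a_n b_n}$ together with the half-integer combinations permitted by the congruence classes of $a_n, b_n$; these require a Liouville bound tailored to the arithmetic of the family, exploiting the fixed biquadratic structure.
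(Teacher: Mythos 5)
Your construction has a genuine gap, and for exponents $\tfrac{p}{q}>\tfrac12$ it is not merely incomplete but unrepairable as stated. If $a_n,b_n$ are coprime and of comparable size ($a_n\asymp b_n\asymp n$, so $D_{K_n}\asymp n^4$), then the field $K_n=\Q(\sqrt{-a_n},\sqrt{-b_n})$ always contains the integral generator $\beta_n=\sqrt{-a_n}+\sqrt{-b_n}$, whose minimal polynomial is $\bigl(x^2+(\sqrt{a_n}+\sqrt{b_n})^2\bigr)\bigl(x^2+(\sqrt{a_n}-\sqrt{b_n})^2\bigr)$ and whose Mahler measure is at most $(\sqrt{a_n}+\sqrt{b_n})^4\le 16\max(a_n,b_n)^2\asymp n^2\asymp D_{K_n}^{1/2}$. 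Since $M(\mathcal{O}_{K_n})$ is a minimum over \emph{all} integral generators, tuning the shift $A_n$ in your chosen element $\alpha_n$ only raises the Mahler measure of that one generator; it cannot prevent $\beta_n$ from witnessing $M(\mathcal{O}_{K_n})\le 16\,D_{K_n}^{1/2}$. Hence no Liouville analysis, however refined, can deliver the lower bound $c_1D_{K_n}^{p/q}$ for $p/q>\tfrac12$ from this family. Even for $\tfrac14<\tfrac{p}{q}<\tfrac12$ your lower bound is not established: for coprime comparable radicands the general estimate (Proposition~\ref{Liouvilleimaginarybound}, which is where Liouville's theorem actually lives) only yields $\frac{1}{256}\max(a_n,b_n)\asymp D_{K_n}^{1/4}$, and the proposal does not say how the case analysis would improve on this for the specific family.

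The paper's proof avoids a per-family Liouville argument entirely by encoding the target exponent in the \emph{relative sizes} of the pairwise-coprime parameters $l,m,n$ of $K=\Q(\sqrt{-ml},\sqrt{-nl})$, so that the uniform lower bound $M(\mathcal{O}_K)\ge\frac{1}{256}ln$ (and $\ge\frac{1}{2304}l^2$ when $l>n$) already equals $D_K^{p/q}$ up to constants. For $\tfrac14\le\tfrac{p}{q}\le\tfrac12$ it takes $l=1$ with $m\asymp k^{2s}$ much smaller than $n\asymp k^{2t}$, choosing Eisenstein polynomials so that $k^{t-s}\sqrt{m}-\sqrt{n}$ is tiny and the upper-bound generator $k^{t-s}\sqrt{-m}+\sqrt{-n}$ has two conjugates inside the unit circle; for $\tfrac12\le\tfrac{p}{q}\le1$ it makes the two radicands share a large common factor, $K=\Q(\sqrt{-n},\sqrt{-mn})$, which forces $l\asymp k^t$ large and pushes the lower bound up to $l^2$, reaching exponent $1$ when $m$ is bounded. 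Granville's theorem is used only to make the polynomial values squarefree and coprime so that the discriminant formula applies. If you want to salvage your approach, you must abandon ``comparable size'' and instead let the arithmetic shape of the radicands (their size ratio, and for large exponents their common factor) carry the exponent, with the upper bound coming from a generator adapted to that shape rather than from a tunable additive shift.
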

\noindent
In Corollaries~\ref{cor:imagbiqadratic14}, \ref{cor:imagbiqadratic12}, \ref{cor:imagbiqadratic23}, and \ref{cor:imagbiqadratic1} we produce explicit unconditional examples for exponents $\tfrac{p}{q} =\tfrac{1}{4}, \tfrac{1}{2}, \tfrac{2}{3}$ and $1$.

For totally real cyclic quartic fields, in Theorem~\ref{theorem:realcyclic16},  we show that the theoretical lower bound exponent of $\tfrac16$ is sharp. 
The range of exponents in the theoretical bounds is $[\tfrac16, \tfrac12]$.  Assuming the ABC conjecture, we show that all rational exponents in the range $[\tfrac1{4},\tfrac12)$ are realized by infinitely many fields.
\begin{theorem}\label{realcyc_main}
	Let $\tfrac{1}{4} \le \tfrac{p}{q} < \tfrac12$ be a rational number. There are absolute  constants $c_1, c_2>0$ such that assuming the ABC conjecture there are infinitely many totally real cyclic quartic fields $K$ for which 
	\[ c_1 D_K^{\frac{p}{q}} \le M(\mathcal{O}_K) \le c_2 D_K^{\frac{p}{q}}.\]
\end{theorem}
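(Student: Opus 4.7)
The plan is, for each rational $p/q \in [\tfrac14, \tfrac12)$, to construct an explicit one-parameter family of totally real cyclic quartic fields $K_n = \Q(\alpha_n)$ in which both $M(\alpha_n)$ and $D_{K_n}$ grow as polynomial powers of $n$ whose ratio matches $p/q$. Following the template of Theorems~\ref{allexpreal} and~\ref{thm:conditionalimaginary}, I would start from a biquadratic polynomial $f_n(x) = x^4 - 2A(n)\,x^2 + C(n) \in \Z[x]$ with $A,C \in \Z[n]$ chosen so that $A^2 > C > 0$ (total reality) and $C(A^2-C)$ is a perfect square in $\Z[n]$ (which forces $\mathrm{Gal}(f_n/\Q) \cong \Z/4\Z$ rather than the Klein four group or $D_4$). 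A natural realization is the classical Kronecker-style parametrization $\alpha = \sqrt{A + B\sqrt{m}}$ with $m = r^2 + s^2$ a positive sum of two squares and $C = A^2 - B^2 m$. By independently varying the degrees of $A(n)$, $B(n)$, $m(n)$ as polynomials in $n$, the ratio $\log M(\alpha_n) / \log \mathrm{disc}(f_n)$ can be tuned to any rational in the interval $[\tfrac14, \tfrac12)$.

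The upper bound $M(\Ox_{K_n}) \leq M(\alpha_n) \leq c_2 D_{K_n}^{p/q}$ follows from a direct computation: the roots of $f_n$ are $\pm\sqrt{A \pm B\sqrt{m}}$, from which $M(\alpha_n)$ is read off as an explicit polynomial expression in $n$, while $\mathrm{disc}(f_n) = 256\, m\, B^4 (A^2 - B^2 m)^2$ is likewise a polynomial in $n$. Granville's theorem on squarefree values (conditional on ABC), applied to the squarefree kernel of $\mathrm{disc}(f_n)$, produces infinitely many $n$ for which the index $[\Ox_{K_n} : \Z[\alpha_n]]$ is absolutely bounded, so $D_{K_n} \asymp \mathrm{disc}(f_n)$. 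Choosing the degrees of $A$, $B$, $m$ so that the polynomial exponents realize the desired ratio $p/q$ then yields the upper bound.

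The main obstacle is the matching lower bound $M(\Ox_{K_n}) \geq c_1 D_{K_n}^{p/q}$. Liouville's theorem for a totally real quartic only gives $M(\beta) \gg D_{K_n}^{1/6}$ for an arbitrary integral generator $\beta$, which is weaker than $D_{K_n}^{p/q}$ throughout our range, since $p/q \geq \tfrac14 > \tfrac16$. To recover the correct exponent I would leverage the cyclic quartic Galois structure, specifically the existence of the unique real quadratic subfield $F_n = \Q(\sqrt{m(n)}) \subset K_n$ and the tower discriminant factorization $D_{K_n} = D_{F_n}^{\,2}\cdot N_{F_n/\Q}(\mathfrak{d}_{K_n/F_n})$. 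Bounding the relative different contribution by a Liouville-type estimate in the quadratic extension $K_n/F_n$, and combining it with the Liouville estimate in $F_n/\Q$ applied to $\mathrm{Tr}_{K_n/F_n}(\beta)$, should yield the exponent $p/q$ for every integral generator $\beta$, provided the squarefree input from Granville's theorem propagates cleanly through the tower. Executing this two-step subfield argument, and verifying that it is quantitatively tight for every target $p/q$ in the interval, is where I expect the bulk of the technical work to reside; the strict inequality $p/q < \tfrac12$ reflects the fact that the subfield contribution cannot be pushed all the way to the trivial upper bound $D_{K_n}^{1/2}$ from $(***)$.
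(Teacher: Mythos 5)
Your proposal has two genuine gaps, one on each side of the inequality. On the upper bound: if you take the obvious generator $\alpha_n=\sqrt{A+B\sqrt{m}}$ and ``read off'' its Mahler measure from the four roots $\pm\sqrt{A\pm B\sqrt{m}}$, you get (when all roots lie outside the unit circle, which is forced for all but degenerate parameter choices) $M(\alpha_n)=A^2-B^2m$, which in the paper's normalization $\rho=\sqrt{A(D+B\sqrt{D})}$ is $A^2DC^2\gg A^2D$. Against $D_K\asymp A^2D^3$ this yields an exponent of at least $\tfrac{2s+t}{2s+3t}>\tfrac13$ when you parametrize $A\asymp k^{2s}$, $D\asymp k^{2t}$ --- roughly the \emph{square} of the lower bound $A\sqrt{D}$ that one can actually prove, and it never reaches down to $\tfrac14$. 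No tuning of the degrees of $A,B,m$ fixes this: the whole difficulty is that the extremal generator is not $\rho$ but an element like $\lfloor\rho\rfloor+k^s\sqrt{D}+\rho+\sigma$ in which three of the four conjugates are $O(k^s)$ or even $O(1)$ by massive cancellation. Arranging that cancellation, i.e.\ making $|k^s\sqrt{D}-\sigma|$ bounded, forces special algebraic identities on the coefficient polynomials (this is why the paper's Proposition~\ref{realcyc_31012} needs Catalan-number coefficients in $A(x)$ and Proposition~\ref{realcyc_14310} needs the primes $r\equiv5\pmod 8$ and the telescoping choice of $C(x)$), and it is also why the range splits at $\tfrac3{10}$ and why the interval $(\tfrac16,\tfrac14)$ remains out of reach. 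Your proposal does not engage with any of this.

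On the lower bound: your instinct to exploit the quadratic subfield is correct, but the mechanism you describe --- a Liouville estimate applied to $\mathrm{Tr}_{K_n/F_n}(\beta)$ combined with a bound on the relative different --- does not work as stated. For $\beta=\tfrac14(x_1+x_2\sqrt{D}+x_3\rho+x_4\sigma)$ the trace to $F_n=\Q(\sqrt{D})$ is $\tfrac12(x_1+x_2\sqrt{D})$, which discards exactly the coordinates $x_3,x_4$ that control $M(\beta)$ and can even be rational, so the degree-two Liouville estimate applied to the trace yields nothing for a general integral generator. What actually works (Proposition~\ref{Liouvillerealbound_cyc}) is to subtract the two conjugates $\alpha_2,\alpha_4$ to isolate $x_4\rho-x_3\sigma$, divide by $x_3\rho$, and recognize $\sigma/\rho=\tfrac{\sqrt{D}-B}{C}$ as a quadratic irrational to which Liouville's theorem in degree $2$ applies with $\mu\ge(C+2\sqrt{D})^{-1}$; this forces $x_3\gg\rho/\lambda$ and hence $M(\Ox_K)\ge\tfrac1{48}A\sqrt{D}$, which is the exponent $\tfrac{2s+t}{4s+6t}=\tfrac pq$. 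Unless you supply an argument of this kind, valid for \emph{every} integral generator and not just your chosen $\alpha_n$, the lower bound remains unproven, and together with the overshooting upper bound your construction does not pin down the exponent.
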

\noindent
In Theorem~\ref{theorem:realcyclic12} we produce an infinite family of examples realizing the exponent of $\tfrac12$.
Our work leaves open the question of whether there is a similar density property for real cyclic quartics and exponents in the range $(\tfrac16, \tfrac1{4})$. We also calculate the minimal Mahler measure of all totally real cyclic quartic fields of discriminant at most $\bd$, and present these results in Figure~\ref{fig.realcyc1} and Figure~\ref{fig.realcyc2}.

In Theorem~\ref{thm_cyccpxlb} we improve the theoretical lower bound for totally imaginary cyclic quartic fields by increasing the exponent $\tfrac15$ to $\tfrac13$. Assuming the ABC conjecture, we show that all possible rational exponents are realized for infinitely many fields. 
\begin{theorem}\label{thm:condimg_cyc}
	Let $\tfrac13 \le \tfrac{p}{q} \le 1$ be a rational number.   There are absolute  constants $c_1, c_2>0$ such that assuming the ABC conjecture there are infinitely many totally imaginary cyclic quartic fields $K$ for which
	\[
	c_1\,D_K^{\frac{p}{q}}
	\;\le\;
	M\bigl(\mathcal{O}_K\bigr)
	\;\le\;
	c_2\,D_K^{\frac{p}{q}}.
	\]
\end{theorem}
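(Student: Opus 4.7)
The plan is to mirror the strategy used for Theorem~\ref{thm:conditionalimaginary}: combine Theorem~\ref{thm_cyccpxlb} (which supplies the universal lower bound $M(\Ox_K)\gg D_K^{1/3}$ for every totally imaginary cyclic quartic) with an explicit, ABC-conditional construction of integral generators whose Mahler measure realizes each prescribed exponent $\tfrac{p}{q}\in[\tfrac13,1]$.

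The backbone of the construction will be the classical model polynomial $f(x)=x^4+2ax^2+dc^2$, where $d>1$ is a non-square integer and $a,b,c$ are positive integers satisfying the cyclic condition $a^2=d(b^2+c^2)$ together with $a>b\sqrt d$. A root $\alpha$ of $f$ then generates a totally imaginary cyclic quartic field $K$ with real quadratic subfield $\Q(\sqrt d)$; the polynomial discriminant is $256\,b^4c^2d^3$, and once $a\ge 1+b\sqrt d$ one has $M(\alpha)=a^2-b^2d=dc^2$. For each $\tfrac{p}{q}\in[\tfrac13,1]$ I will prescribe $a=a(N),b=b(N),c=c(N),d=d(N)$ as polynomial functions of a single parameter $N$ that (i) identically solve $a^2=d(b^2+c^2)$, (ii) yield $dc^2\asymp N^{A}$ and $c^2d^3\asymp N^{B}$ with $A/B=p/q$, and (iii) give $D_{K}\asymp c^2d^3$ up to bounded factors. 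The endpoint $\tfrac{p}{q}=\tfrac13$ corresponds to $c$ bounded and $d$ growing, the endpoint $\tfrac{p}{q}=1$ to a nearly-trivial $d$ with $a$ inflated by an additional degree of freedom (such as shifting $\alpha$ by a suitably chosen integer), with the intermediate rationals produced by interpolation.

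Item (iii) is the point where the ABC hypothesis enters. The ratio $\operatorname{disc}(f)/D_K$ is the square of the index $[\Ox_K:\Z[\alpha]]$, so, after stripping the obvious $b^4c^2$ square factor, it is enough to force $d(N)$ (or a small polynomial factor thereof) to be square-free along an infinite sequence of $N$; this is exactly the output of Granville's ABC-conditional theorem on square-free values of polynomials. Given this, the fields $K_N$ are pairwise distinct along an infinite subsequence—two cyclic quartic fields are distinct as soon as their real quadratic subfields differ—and $D_{K_N}\to\infty$, yielding the upper bound $M(\Ox_{K_N})\le M(\alpha_N)\le c_2\,D_{K_N}^{p/q}$.

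For the matching lower bound when $\tfrac{p}{q}>\tfrac13$, Theorem~\ref{thm_cyccpxlb} is insufficient and a Diophantine input is required. Following the method used in Theorems~\ref{thm:conditionalimaginary} and~\ref{realcyc_main}, I will select a convenient element $\gamma\in K$ expressed in terms of $\alpha$, and argue that any integral generator $\beta\in\Ox_K$ provides, via the coefficients of its minimal polynomial, a rational approximation to $\gamma$; Liouville's theorem then forces $M(\beta)\gg D_{K}^{p/q}$. The main technical obstacle will be calibrating the four-parameter polynomial family so that the cyclic Diophantine identity $a^2=d(b^2+c^2)$ is maintained while Granville's square-free hypothesis on $d(N)$ and the correct growth ratio are simultaneously granted for every $\tfrac{p}{q}$ in $[\tfrac13,1]$. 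This is most delicate near $\tfrac{p}{q}=1$, where $M(\alpha)$ must approach the size of $D_K$ itself and $\alpha$ is therefore forced to be a highly non-minimal integral generator, placing a sharp demand on the Liouville estimate to rule out smaller generators.
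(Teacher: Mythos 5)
Your plan has the same architecture as the paper's proof (Granville under ABC for square-free parameter values, an explicit generator for the upper bound, Liouville for the lower bound), and your model polynomial $x^4+2ax^2+dc^2$ with $a^2=d(b^2+c^2)$ is just the minimal polynomial of $\rho=\sqrt{A(D+B\sqrt D)}$ in disguise, with $M(\rho)=A^2DC^2$. But as written there is a genuine gap: the entire lower bound for $\tfrac{p}{q}>\tfrac13$ is deferred to a ``convenient element $\gamma$'' and an unspecified Liouville argument, and you yourself flag it as the main technical obstacle. That argument is the actual content of the theorem. The paper proves it once and for all as Proposition~\ref{Liouvillecpxbound_cyc}: reduce any integral generator to one of the three coefficient families in~(\ref{em.3cases}), and in the family $(0,0,x_3,x_4)$ apply Liouville to the quadratic irrational $\tfrac{\sqrt D-B}{C}$ to force $x_3\gg \mu\sqrt{|A|(D+B\sqrt D)}/\lambda$, yielding $M(\mathcal{O}_K)\ge \tfrac1{2304}A^2D$. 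You need to identify that specific approximation target and carry out that case analysis; without it nothing above exponent $\tfrac13$ is established.

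Two further calibration points that your sketch misses. First, the uniform lower bound is $\asymp A^2D$ while your generator realizes $A^2DC^2$; these match up to constants only if $C$ is bounded, which is why the paper fixes $C=1$ (equivalently, in your notation, $c=|A|$ up to sign). If you let $c$ and $C$ grow independently, the upper and lower bounds separate and the two-sided estimate fails. Second, to invoke the discriminant formula $D_K=cA^2D^3$ with $c\in\{1,16,64,256\}$ you must put the field in Hardy--Williams normal form, which requires \emph{both} $A$ and $D$ square-free and coprime (plus the parity bookkeeping when $k$ is even, handled in the paper via \cite[Eq.~(2.16)]{zbMATH03995823}); making only $d(N)$ square-free does not control the index $[\mathcal{O}_K:\Z[\alpha]]$. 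Finally, your treatment of the endpoint $\tfrac{p}{q}=1$ by ``shifting $\alpha$ by an integer'' is misdirected: shifting changes neither the field nor the infimum over generators. The correct mechanism, as in the paper's choice $s=3p-q$, $t=q-p$ with $A(x)=-(x^s+2)$ and $D(x)=x^{2t}+1$, is simply to let $|A|\to\infty$ with $D$ bounded, and the lower bound $M(\mathcal{O}_K)\gg A^2D\asymp D_K$ again comes from Proposition~\ref{Liouvillecpxbound_cyc}, not from any property of a particular generator.
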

\noindent
In Corollaries~\ref{cor:ccyclic13}, \ref{cor:ccyclic12},  and \ref{cor:ccyclic1} we  produce explicit unconditional examples for exponents $\tfrac{p}{q} = \tfrac{1}{3}, \tfrac{1}{2}$ and $1$.

%\kate{Due to the differences in the upper bounds for fields that contain non-trivial roots of unity, we should explore these cases. For the biquadratic case, $\text{Tor}(K)\neq \{ \pm 1\}$ when $K$ contains at least one of $\sqrt{-1}$ or $\sqrt{-3}$.  Find out what happens here. If $K$ is a cyclic quartic then $\text{Tor}(K)\neq \{ \pm 1\}$ for exactly two cyclotomic fields. These have $M(\Ox_K)=1$ but maybe we want to compute the minimal non-torsion value. Theorem~\ref{thm:rootunityck}

%%%%%%%%%%%%%%%%%%%%%%%%%%%%%%%%%%%%%%%%%%%%%%%%%%%%%%%%
%%%%%%%%%%%%%%%%%%%%%%%%%%%%%%%%%%%%%%%%%%%%%%%%%%%%%%%%
\subsection{Background}\label{section:background}
%%%%%%%%%%%%%%%%%%%%%%%%%%%%%%%%%%%%%%%%%%%%%%%%%%%%%%%%
%%%%%%%%%%%%%%%%%%%%%%%%%%%%%%%%%%%%%%%%%%%%%%%%%%%%%%%%

In 1964, Mahler \cite{MR166188} related what we now call the Mahler measure to the discriminant, and this work implies that for $d=[K:\Q]\geq 2$, 
\[
d^{-\frac{d}{2d-2}} |D_K|^{\frac1{2(d-1)}} \leq M(\Ox_K).
\]
In 1984, Silverman  \cite[Theorem 2]{MR747871}  extended this result to  the non-integral setting, proving
\[ 
	d^{-\frac{d}{2d-1}}|D_K|^{\frac1{2(d-1)}}\leq  M(K). 
\]
Ruppert \cite[page 18]{MR1624340} and Masser \cite[Proposition 1]{MR1363143} showed that the lower bound is sharp by  providing a family of fields for which this lower bound  cannot be improved by a larger exponent on the discriminant. (See also \cite{MR3413883}.)  Vaaler and Widmer  \cite{MR3413883} showed that  for composite $d$ there are number fields $K$ for which no constant $c_d$ satisfies
\[
M(K)  \leq c_d |D_K|^{\frac1{2(d-1)}} 
\]
demonstrating that there are fields whose minimal Mahler measure grows faster than Silverman's lower bound.
Child and Widmer \cite[Corollary 1]{MR4742655} improved the lower bound for totally imaginary number fields in the integral case to 
\[
2^{\frac{d(1-d)}{2d-3}} |D_K|^\frac1{2d-3} \leq M(\Ox_K). 
\]

The best general upper bound for $M(\Ox_K)$ is 
\[ M(\Ox_K)\leq |D_K|\] which follows from Minkowski's convex body theorem (see \cite[Lemma 7.1]{MR4208221}). 
Ruppert \cite[Proposition 3]{MR1624340}  proved 
 that if $K$ is totally real of prime degree then $ M(\Ox_K)\leq    |D_K|^{\frac12}.$
(This argument  can be extended to all number fields of prime degree using an extension of Minkowski's linear forms theorem.)
Vaaler and Widmer  \cite[Theorem 1.2]{MR3074815}  proved that if $K$ is not totally imaginary, and $r_2$ denotes the number of complex places of $K$,   with $c_K =  \big(\tfrac{2}{\pi}\big)^{r_2} |D_K|^{\frac1{2}}$, then \[ M(\Ox_K) \leq c_K.\]
(The theorem does not state explicitly that the primitive element is integral, and the integrality follows from the proof. See also the commentary in \cite{AVW1}.)
Vaaler and Widmer  show \cite[Theorem 1.3]{MR3074815}  that if $K$ is totally imaginary there is a constant $B=B(d)$ such that  \[ M(K) \leq B |D_K|^{\frac12}\]  under the assumption of the truth of the generalized Riemann hypothesis for the Galois closure of $K$. This result produces a non-integral small height generator. Akhtari, Vaaler and Widmer \cite[Corollary 1.1]{AVW1} have shown that  \[ M(\Ox_K) \leq c_K\]   for $K$ with $\text{Tor}(K^{\times}) \neq \{\pm 1\}$.  Conversely, they show  \cite[Theorem 1.3]{AVW1} that if $c_K < M(\Ox_K)$ then $K$ is totally imaginary and has a totally real subfield $F$ such that $K/F$ is Galois and $\mathrm{Tor}(K^{\times})=\{ \pm 1\}$. Further, if $E\subset K$ is totally real, then $E\subset F$. They provide the example (in Section 1) of $K=\Q(\sqrt{\sqrt{3}-2})$ with these features but where the inequality $c_K< M(\Ox_K)$ holds.  For any even degree $d$ (at least 4) they also construct infinitely many CM fields of degree $d$ that satisfy $c_K < M(\Ox_K)$.
In Section~\ref{section:rootsunity}, for all Galois quartics $K$ containing non-trivial roots of unity (roots of unity other than $\pm 1$),  we determine explicit elements in $\Ox_K$ with Mahler measure smaller than $c_K$. Theorem~\ref{thm:conditionalimaginary} and \ref{thm:condimg_cyc} demonstrate that there are infinitely many totally imaginary biquadratic fields and infinitely many totally imaginary cyclic quartic fields $K$, all of which are CM, such that $c_K<M(\Ox_K)$ as well as infinitely many of both types of fields for which $\text{Tor}(K^{\times})=\{\pm 1\}$ such that $M(\Ox_K)\leq c_K.$

Many of the results  above stated using a different height. Our work could equivalently be stated, for example, in terms of the absolute multiplicative Weil height of an algebraic number $\alpha$, $H(\alpha)$, using the fact that if the degree of $\alpha$ is $d$ then $H(\alpha) = M(\alpha)^{\frac1d}$.  We will discuss number fields $K$ in terms of explicit embeddings into $\R$ or $\C$ as the Mahler measure of two conjugate elements is equal.

%%%%%%%%%%%%%%%%%%%%%%%%%%%%%%%%%%%%%%%%%%%%%%%%%%%%%%%%%%%%%%%
\subsection{Results in small degree}
%%%%%%%%%%%%%%%%%%%%%%%%%%%%%%%%%%%%%%%%%%%%%%%%%%%%%%%%%%%%%%%

For quadratic number fields, the upper and lower bounds have the same exponent on the  discriminant.  Cochrane et al \cite{MR3463562} showed that for real quadratics 
\[ \tfrac12 |D_K|^{\frac12} \leq M(K) \leq |D_K|^{\frac12}. \]    
For cubic number fields  the aforementioned bounds specialize to 
\[ 3^{-\frac{3}{4}}|D_K|^{\frac1{4}}\leq  M(\Ox_K) \leq  |D_K|^{\frac12}. \]
Eldredge and Petersen \cite{MR4468151},  showed that  the exponent on the lower bound is sharp and that there are infinitely many Kummerian cubic fields with exponent $\tfrac13$.

Galois quartic number fields are either totally real or totally imaginary and have positive discriminants.  The corresponding $c_K$ values  are $D_K^{\frac12}$ and $ \big(\tfrac{2}{\pi}\big)^{2} D_K^{\frac1{2}}$, respectively. This gives us the bounds $(*)$, $(**)$, and $(***)$.
Totally imaginary Galois quartic fields are CM fields. The only roots of unity other than $\pm 1$ that can be contained in a biquadratic are powers of $\sqrt{-1}$ or $e^{\frac{2\pi i}6}$, which occur only when $ \sqrt{-1}$ or $\sqrt{-3}$ are contained in $K$. We note that $\Q(\sqrt{-1}, \sqrt{-3})$ is a biquadratic field and is the splitting field of the cyclotomic polynomial $\Phi_{12}(x)=x^4-x^2+1$.  The splitting field of $\Phi_8(x)= x^4+1$, $\Q(\sqrt{-1}, \sqrt{2})$  is the only other cyclotomic biquadratic field.  For totally imaginary cyclic quartic fields, only the splitting fields of the cyclotomic polynomials $\Phi_5= x^4+x^2+x+1$ and $\Phi_{10}=x^4-x^3+x^2-x+1$  contain non-trivial roots of unity.

%%%%%%%%%%%%%%%%%%%%%%%%%%%%%%%%%%%%%%%%%%%%%%%%%%%%%%%%%%%%%%%
\subsection{Liouville's Theorem and Square-Free Values of Polynomials}
%%%%%%%%%%%%%%%%%%%%%%%%%%%%%%%%%%%%%%%%%%%%%%%%%%%%%%%%%%%%%%%

Many of our results use Liouville's  theorem in Diophantine approximation, which we state with an explicit approximation constant, see \cite[Chap. 6, Sec.1]{Cassels}.

\begin{theorem}[Liouville]\label{thm.Liou} 
Let $\alpha$ be a real algebraic number of degree $d \geq 2$. There is a constant $\mu=\mu(\alpha)>0$ such that for all rational numbers $\tfrac{p}{q}$, we have \[\left|\frac{p}{q}-\alpha\right|\ge \frac{\mu}{q^d}.\]
We can take 
\[
\mu(\alpha)=\frac{1}{a_d\prod_{i=2}^d(1+|\alpha|+|\alpha_i|)},\]
where $\alpha_i$ are the conjugates of $\alpha_1:=\alpha$ and $a_d>0$ is the leading coefficient of a minimal polynomial of $\alpha$ over $\mathbb{Z}$. 
\end{theorem}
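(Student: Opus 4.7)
The plan is to run the classical proof of Liouville's theorem and keep careful track of the constants so that the stated explicit value of $\mu(\alpha)$ drops out.

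\medskip

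First I would fix a minimal polynomial $f(x)=a_d\prod_{i=1}^d(x-\alpha_i)\in\Z[x]$ for $\alpha$, with $\alpha_1=\alpha$. Since $\alpha$ has degree $d\ge 2$, the polynomial $f$ is irreducible over $\Q$, so for every rational $p/q$ (in lowest terms, $q\ge 1$) we have $f(p/q)\ne 0$. The quantity $q^d f(p/q)$ is a nonzero integer, which gives the lower bound $|f(p/q)|\ge 1/q^d$. This is the one and only place where the Diophantine content of the proof enters.

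\medskip

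Next I would factor $|f(p/q)|=a_d\,|p/q-\alpha|\prod_{i=2}^d|p/q-\alpha_i|$ and seek an upper bound for the factors $|p/q-\alpha_i|$ when $p/q$ is already close to $\alpha$. I would split into two cases. If $|p/q-\alpha|\ge 1$, then since $q\ge 1$ and $\mu(\alpha)\le 1$ (because every factor $1+|\alpha|+|\alpha_i|$ in the denominator is at least $1$ and $a_d\ge 1$), the inequality $|p/q-\alpha|\ge \mu(\alpha)/q^d$ is immediate. If $|p/q-\alpha|<1$, then $|p/q|<1+|\alpha|$, and by the triangle inequality each conjugate factor satisfies
\[
\bigl|\tfrac{p}{q}-\alpha_i\bigr|\le |\tfrac{p}{q}|+|\alpha_i|<1+|\alpha|+|\alpha_i|.
\]
Combining this with the integrality lower bound yields
\[
\frac{1}{q^d}\le |f(p/q)|\le a_d\,|p/q-\alpha|\prod_{i=2}^d\bigl(1+|\alpha|+|\alpha_i|\bigr),
\]
and solving for $|p/q-\alpha|$ gives exactly the constant $\mu(\alpha)$ stated in the theorem.

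\medskip

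There is essentially no obstacle here: the argument is completely standard and the only subtlety is the bookkeeping needed to make the constant match the one in the statement. The minor points to check are that $\mu(\alpha)\le 1$ (so the trivial case $|p/q-\alpha|\ge 1$ is covered) and that the inequalities $1+|\alpha|+|\alpha_i|$ are strict in the second case but become non-strict in the final bound, which is harmless. I would conclude by noting that the bound is uniform in $p/q$ because only the conjugates and leading coefficient of $f$ enter, both of which depend only on $\alpha$.
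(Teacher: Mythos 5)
Your proof is correct and is exactly the classical argument (the one in the cited reference, Cassels): the paper itself does not reprove this theorem but simply quotes it with the explicit constant. The constant-tracking is right — the integrality bound $|q^d f(p/q)|\ge 1$, the case split on $|p/q-\alpha|\ge 1$ (covered because $\mu(\alpha)\le 1$), and the triangle-inequality bound $|p/q-\alpha_i|<1+|\alpha|+|\alpha_i|$ together yield precisely the stated $\mu(\alpha)$.
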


We will also rely upon Granville's work \cite[Theorem 1]{MR1654759} which guarantees square-free values of polynomial functions. 

\begin{theorem}[Granville]\label{thm:Granville}
Assume that $f(x)\in \Z[x]$ has no repeated roots. Let $B$ be the largest integer that divides $f(n)$ for all $n$,  $B'$ be the smallest divisor of $B$ such that $B/B'$ is square-free,  $q_p$ be the largest power of $p$ dividing $B'$ and $\omega_f(p)$ be the number of integers a with $1\leq a \leq p^{2+q_p}$ satisfying $f(a)/B' \equiv 0 \pmod {p^2}$.  Define 
\[
c_f = \prod_{p \text{ prime}} \Big( 1- \frac{\omega_f(p)}{p^{2-q_p}} \big). 
\]
Assuming the truth of the ABC conjecture then  $c_f>0$ and 
\[
\# \{ n\leq N : f(n)/B' \text{ is square-free } \} \sim c_fN.
\]

\end{theorem}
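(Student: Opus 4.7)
The plan is to prove the theorem by a sieve argument: write
\[
\#\{n\le N: f(n)/B' \text{ squarefree}\} = \sum_{d\ge 1} \mu(d)\,\#\{n\le N : d^2\mid f(n)/B'\},
\]
and estimate the contribution of each prime (and prime power) divisor separately. The overall strategy splits primes into three ranges according to a parameter $y$ (to be chosen near $N^{1/2}$), with the ABC conjecture intervening only for the largest primes.

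First, for the small-prime range $p\le y$, the count $\#\{n\le N : p^2\mid f(n)/B'\}$ reduces to a residue computation modulo $p^{2+q_p}$. By definition, $\omega_f(p)$ residue classes $a\pmod{p^{2+q_p}}$ satisfy $p^2\mid f(a)/B'$, so this count equals $\omega_f(p)\,N/p^{2+q_p} + O(\omega_f(p))$. Extending to squarefree $d$ via the Chinese remainder theorem and then summing $\mu(d)$ over $d\le y$ yields the main term $N\prod_{p\le y}\bigl(1-\omega_f(p)/p^{2-q_p}\bigr)$ plus an error of size $O(y^{O(1)})$. As $y\to\infty$ with $N$, the truncated Euler product converges to $c_f$, provided $c_f>0$.

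Second, for the medium-prime range $y<p\le f(N)^{1/2}$, I would use the elementary bound $\#\{n\le N : p^2\mid f(n)\} \ll_f N/p^2 + \deg f$ (from bounding the number of roots of $f(x)\equiv 0\pmod{p^2}$ lifting from roots mod $p$, using that $p\nmid\operatorname{disc}(f)$ for $p$ large enough). Summing $1/p^2$ over $p>y$ gives $O(N/y)$, which is $o(N)$ once $y\to\infty$.

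The main obstacle, and where ABC enters, is the large-prime range $p>f(N)^{1/2}$. Here $f(n)=p^2 k$ with $k$ small, so $f(n)$ is highly non-squarefree; the naive bound $N/p^2$ is too weak after summation. The plan is to factor $f$ over a number field $K$ containing all its roots, observe that $p^2\mid f(n)$ forces $\mathfrak{p}^2\mid (n-\alpha_i)\mathcal{O}_K$ for some root $\alpha_i$ and some prime $\mathfrak{p}\mid p$, and then apply the ABC conjecture (in the form that bounds $\max(|a|,|b|,|c|)$ in terms of the radical when $a+b=c$) to the relation $(n-\alpha_i) - (n-\alpha_j) = \alpha_j-\alpha_i$, exploiting the large square divisor $\mathfrak{p}^2$. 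This shows that the total contribution of such $n$ is $O_\epsilon(N^{1-\delta})$ for some $\delta>0$, hence $o(N)$. Combining the three ranges gives the asymptotic $c_f N$. Positivity of $c_f$ follows because each local factor is positive: $\omega_f(p)<p^{2-q_p}$, the key estimate being that $f$ has no repeated roots, so the reduction mod $p$ has at most $\deg f$ distinct roots and the number of doubly-repeated solutions mod $p^2$ is controlled by Hensel's lemma at primes $p\nmid \operatorname{disc}(f)$, with the finitely many bad primes handled by the choice of $B'$.
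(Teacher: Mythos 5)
This statement is not proved in the paper at all: it is quoted verbatim as \cite[Theorem 1]{MR1654759}, so there is no in-paper argument to compare yours against. Judged on its own merits, your sketch has the right overall architecture (a squarefree sieve, a split of the primes into ranges, ABC entering only for the hard range, and Hensel's lemma plus convergence of $\sum_p \omega_f(p)p^{-2}$ for the positivity of $c_f$), but two of its load-bearing steps do not work as written.

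First, the range decomposition is off. If $p^2\mid f(n)$ with $f(n)\neq 0$ and $n\le N$, then $p^2\le |f(n)|\ll N^{\deg f}$, so your ``large-prime range'' $p>f(N)^{1/2}$ is empty. Meanwhile the elementary bound $\#\{n\le N: p^2\mid f(n)\}\ll N/p^2+\deg f$ does \emph{not} dispose of the range $y<p\le f(N)^{1/2}$: summing the $O(\deg f)$ term over all primes up to $N^{\deg f/2}$ costs $\gg N^{\deg f/2-o(1)}$, which dwarfs $N$ once $\deg f\ge 3$. The elementary argument only reaches $p\ll N^{1+o(1)}$ (which is exactly why degree $2$ is unconditional and why Hooley's extra idea is needed already at degree $3$); the genuinely hard range is roughly $N\ll p\ll N^{\deg f/2}$, and that is where the real work lies.

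Second, the proposed invocation of ABC does not go through. The identity $(n-\alpha_i)-(n-\alpha_j)=\alpha_j-\alpha_i$ lives in the splitting field of $f$, so to exploit it you would need an ABC conjecture over number fields --- a strictly stronger hypothesis than the rational ABC conjecture assumed in the statement. Granville's actual mechanism stays over $\Q$: using Belyi's theorem he constructs, from $f$, an identity $g(n)+h(n)=e(n)$ of (essentially) coprime rational integers whose product is divisible by a controlled power of $f(n)$, applies the ordinary ABC conjecture to it, and deduces the radical lower bound $\prod_{p\mid f(n)}p\gg_\epsilon n^{\deg f-1-\epsilon}$. This forces the square-full part of $f(n)$ to be $\ll n^{1+\epsilon}$ and hence annihilates the hard range of primes. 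That Belyi construction is the essential content of the theorem, and it is absent from your sketch; without it (or an equivalent device) the argument has a genuine gap.
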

As Granville notes,  this  result can be proven unconditionally if $f$ has degree $\leq 2$ using the sieve of
Eratosthenes. The result was proven unconditionally by Hooley \cite{MR214556} for $f$ of degree three.

%%%%%%%%%%%%%%%%%%%%%%%%%%%%%%%%%%%%%%%%%%%%%%%%%%%%%%%%
%%%%%%%%%%%%%%%%%%%%%%%%%%%%%%%%%%%%%%%%%%%%%%%%%%%%%%%%
\section{Discriminants and Integers of Quartic fields}\label{section:introquart}
%%%%%%%%%%%%%%%%%%%%%%%%%%%%%%%%%%%%%%%%%%%%%%%%%%%%%%%%
%%%%%%%%%%%%%%%%%%%%%%%%%%%%%%%%%%%%%%%%%%%%%%%%%%%%%%%%

Quartic fields can be distinguished by the number of their real and imaginary embeddings as well as their Galois groups. These Galois groups are either $S_4$, $A_4$, $D_8$ (the dihedral group with 8 elements), $\Z/2\Z\times \Z/2\Z$ or $\Z/4\Z$.  In this manuscript, we focus on quartic Galois number fields, those with Galois groups  $\Z/2\Z\times \Z/2\Z$ or $\Z/4\Z$.

The theoretical lower bounds differ depending on whether $K$ has a real embedding or not. The upper bounds are more subtle, notably by   \cite[Theorem 1.3]{AVW1}, in the quartic case, if $c_K=\big(\tfrac2{\pi} \big)^2 |D_K|^{\frac12} \leq M(\Ox_K)$ then $K$ is a CM field with no roots of unity other than $\pm 1$.  A number field $K$ is a CM-field if $K$ has only complex embeddings and there exists a totally real subfield $F\subset K$  such that $K/F$ is a quadratic extension.  Of note, all  totally imaginary cyclic and biquadratic quartics are  CM.

Let $\alpha \in K$. For later use, we define
\begin{equation}\label{eq.M'}
	M'(\alpha) = |c(\alpha)|^{-1} M(\alpha),
\end{equation}
where $c(\alpha)$ denotes the leading coefficient of the minimal polynomial of $\alpha$ over $\mathbb{Q}$ with content 1.  
Note that $M'(\alpha) = M(\alpha)$ when $\alpha \in \mathcal{O}_K$.

We now collect information about the discriminants and integer rings of Galois quartic number fields. 

%%%%%%%%%%%%%%%%%%%%%%%%%%%%%%%%%%%%%%%%%%%%%%%%%%%%%%%%
\subsection{Biquadratic Fields}\label{section:introbiquad}
%%%%%%%%%%%%%%%%%%%%%%%%%%%%%%%%%%%%%%%%%%%%%%%%%%%%%%%%

A biquadratic number field is a quartic Galois number field with Galois group isomorphic to the Klein 4-group, $\mathbb Z/2\mathbb Z \times \mathbb Z/2\mathbb Z$. These fields can be written as $\Q(\sqrt{m l}, \sqrt{nl})$, where $m$, $n$, and $l$  are pairwise coprime square-free  integers.  We can reduce to considering the cases when $(ml,nl)\equiv (1,1), (1,2), (2,3)$, or $(3,3)\pmod 4$ since 
\[
\Q(\sqrt{ml},\sqrt{nl}) = \Q(\sqrt{nl}, \sqrt{ml}) = \Q(\sqrt{ml}, \sqrt{mn}) = \Q(\sqrt{nl}, \sqrt{mn}).
\]
Because of these different representations of $K$, we will often assume that $|l| \leq |m|\leq |n|$.
  From \cite[Theorem 2]{MR279069} the  discriminant of $K=\Q(\sqrt{ml}, \sqrt{nl})$ satisfies $D_K=c (l m n)^2$ where $c$ is given by the following congruence modulo 4
\[
c = 
\left\{
\begin{aligned}
1  &  \ \text{ if } \ (ml,nl) \equiv (1,1)&\pmod 4, \\
16 & \ \text{ if } \ (ml,nl) \equiv (1,2) \text{ or } (3,3)&\pmod 4,  \\
64 & \ \text{ if } \ (ml,nl) \equiv (2,3)&\pmod 4. 
\end{aligned}
\right. 
\]
The ring of integers $\mathcal{O}_K$ of $K$ is also computed in  \cite[Theorem 1]{MR279069}. Elements in $\Ox_K$ have the form 
\[ f(x_0+x_1\sqrt{ml}+x_2\sqrt{nl} +x_3\sqrt{mn}),\] where $x_0, x_1, x_2, x_3$ are rational integers and 
\begin{itemize}
\item[$\circ$] if $(ml,nl)\equiv (m,n) \equiv (1,1) \pmod 4$ then $f=\tfrac14$, $x_0\equiv x_1 \equiv x_2 \equiv x_3 \pmod 2$ and $x_0-x_1+x_2-x_3\equiv 0 \pmod 4$, 
\item[$\circ$]  if $(ml,nl)\equiv (1,1),  (m,n) \equiv (3,3) \pmod 4$ then $f=\tfrac14$, $x_0\equiv x_1 \equiv x_2 \equiv x_3 \pmod 2$ and $x_0-x_1-x_2-x_3\equiv 0 \pmod 4$, 
\item[$\circ$]  if $(ml,nl)\equiv (1,2)  \pmod 4$ then $f=\tfrac12$, $x_0\equiv x_1, x_2 \equiv  x_3 \pmod 2$, 
\item[$\circ$]  if $(ml,nl)\equiv (2,3)  \pmod 4$ then $f=\tfrac12$, $x_0\equiv x_2 \equiv 0,  x_1 \equiv  x_3 \pmod 2$, 
\item[$\circ$]  if $(ml,nl)\equiv (3,3)  \pmod 4$ then $f=\tfrac12$, $x_0\equiv x_3,  x_1 \equiv  x_2 \pmod 2$. 
\end{itemize}

%%%%%%%%%%%%%%%%%%%%%%%%%%%%%%%%%%%%%%%%%%%%%%%%%%%%%%%%
\subsection{Cyclic Quartic Fields}\label{section:introcyclic}
%%%%%%%%%%%%%%%%%%%%%%%%%%%%%%%%%%%%%%%%%%%%%%%%%%%%%%%%

A cyclic quartic field is a quartic Galois number field with Galois group isomorphic to $\Z/4\Z.$ For such a $K$,  there exist unique integers $A,B,C,D$ such that
	\[ K \;=\; \Q\bigl(\sqrt{A\,(D + B\sqrt{D})}\bigr), \]	
	where $A$  is odd and square-free, $D = B^2 + C^2$  is square-free with $B,C>0$, and  $\gcd(A,D)=1$ (see \cite[Theorem 1]{zbMATH03995823}).
Any field satisfying these properties is a  cyclic quartic extension, and it is totally real if and only if $A > 0$.

The discriminant of $K$ (see \cite{zbMATH03995823}) is
\begin{equation}\label{eq.disc_cyc}
	D_K = c\,A^2\,D^3,
\end{equation}
where
\[
c =
\begin{cases}
	256 & \text{if }D\equiv0\pmod{2},\\
	64 & \text{if }D\equiv1\pmod{2},\;B\equiv1\pmod{2},\\
	16 & \text{if }D\equiv1\pmod{2},\;B\equiv0\pmod{2},\;A+B\equiv3\pmod{4},\\
	1 & \text{if }D\equiv1\pmod{2},\;B\equiv0\pmod{2},\;A+B\equiv1\pmod{4}.
\end{cases}
\]

Set
\begin{equation}\label{rho_sig}
	\rho = \sqrt{A\,(D + B\sqrt{D})}, 
	\qquad
	\sigma = \sqrt{A\,(D - B\sqrt{D})}.
\end{equation}
Then an integral basis of $\mathcal{O}_K$ is given by one of the following, according to congruence conditions on $A$, $B$, and $D$  (see \cite[Theorem on page 146]{MR1057983}):

\begin{itemize}
	\item[$\circ$] If $D\equiv0\pmod{2}$ then  is $\{\,1,\;\sqrt{D},\;\rho,\;\sigma\}$ an integral basis.
	\item[$\circ$]  If $D\equiv B\equiv1\pmod{2}$ then
	$
	\bigl\{\,1,\;\tfrac12(1+\sqrt{D}),\;\rho,\;\sigma\bigr\}
	$ is an integral basis.
	\item[$\circ$]  If $D\equiv1\pmod{2}$, $B\equiv0\pmod{2}$, and $A+B\equiv3\pmod{4}$ then 
	\[
	\bigl\{\,1,\;\tfrac12(1+\sqrt{D}),\;\tfrac12(\rho+\sigma),\;\tfrac12(\rho-\sigma)\bigr\}
	\]
	is an integral basis.
	\item[$\circ$] If $D\equiv1\pmod{2}$, $B\equiv0\pmod{2}$, $A+B\equiv1\pmod{4}$, and $A\equiv C\pmod{4}$ then
	\[
	\bigl\{\,1,\;\tfrac12(1+\sqrt{D}),\;\tfrac14\bigl(1+\sqrt{D}+\rho+\sigma\bigr),\;\tfrac14\bigl(1-\sqrt{D}+\rho-\sigma\bigr)\bigr\}
	\]
	is an integral basis.
	\item[$\circ$]  If $D\equiv1\pmod{2}$, $B\equiv0\pmod{2}$, $A+B\equiv1\pmod{4}$, and $A\equiv -C\pmod{4}$ then
	\[
	\bigl\{\,1,\;\tfrac12(1+\sqrt{D}),\;\tfrac14\bigl(1+\sqrt{D}+\rho-\sigma\bigr),\;\tfrac14\bigl(1-\sqrt{D}+\rho+\sigma\bigr)\bigr\}
	\]
	is an integral basis.
\end{itemize}

The proposition below follows directly from this statement. 

\begin{prop}\label{prop.cycint}
	Let $A, B, C, D \in \mathbb{Z}$ and the field $K$ be as above, and let $\rho, \sigma$ be as defined in equation~\eqref{rho_sig}.  
	Then the integers of the field $K$ are precisely the elements of the following forms where  $x_1, x_2, x_3, x_4 \in \mathbb{Z}$. :
	\begin{itemize}
		\item[$\circ$] If $D \equiv 0 \pmod{2}$, the integers are
		\[
		x_1 + x_2\sqrt{D} + x_3\rho + x_4\sigma.
		\]
		
		\item[$\circ$] If $D \equiv B \equiv 1 \pmod{2}$, the integers are
		\[
		\tfrac{1}{2}(x_1 + x_2\sqrt{D} + x_3\rho + x_4\sigma),
		\]
		where $x_1 \equiv x_2$, $x_3 \equiv x_4 \equiv 0 \pmod{2}$.
		
		\item[$\circ$] If $D \equiv 1 \pmod{2}$, $B \equiv 0 \pmod{2}$, and $A + B \equiv 3 \pmod{4}$, the integers are
		\[
		\tfrac{1}{2}(x_1 + x_2\sqrt{D} + x_3\rho + x_4\sigma),
		\]
		where $x_1 \equiv x_2$, $x_3 \equiv x_4 \pmod{2}$.
		
		\item[$\circ$] If $D \equiv 1 \pmod{2}$, $B \equiv 0 \pmod{2}$, $A + B \equiv 1 \pmod{4}$, and $A \equiv C \pmod{4}$, the integers are
		\[
		\tfrac{1}{4}(x_1 + x_2\sqrt{D} + x_3\rho + x_4\sigma),
		\]
		where $x_1 \equiv x_2 \equiv x_3 \equiv x_4 \pmod{2}$, $x_1 - x_2 - x_3 + x_4 \equiv 0 \pmod{4}$.
		
		\item[$\circ$] If $D \equiv 1 \pmod{2}$, $B \equiv 0 \pmod{2}$, $A + B \equiv 1 \pmod{4}$, and $A \equiv -C \pmod{4}$, the integers are
		\[
		\tfrac{1}{4}(x_1 + x_2\sqrt{D} + x_3\rho + x_4\sigma),
		\]
		where $x_1 \equiv x_2 \equiv x_3 \equiv x_4 \pmod{2}$, $x_1 - x_2 - x_3 - x_4 \equiv 0 \pmod{4}$.
	\end{itemize}
\end{prop}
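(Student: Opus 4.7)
The plan is to translate each of the five integral bases listed immediately before the proposition into coordinate form on the $\Q$-basis $\{1,\sqrt{D},\rho,\sigma\}$ of $K$. For a fixed case, let $\{b_1,b_2,b_3,b_4\}$ denote the corresponding integral basis. I would expand a general $\Z$-linear combination $y_1 b_1 + y_2 b_2 + y_3 b_3 + y_4 b_4$ in the form $\tfrac{1}{f}(x_1+x_2\sqrt{D}+x_3\rho+x_4\sigma)$ with $f\in\{1,2,4\}$ the common denominator forced by the basis, read off the parity and mod-$4$ constraints on $(x_1,\dots,x_4)$, and verify that the $\Z$-linear map $(y_1,\dots,y_4)\mapsto(x_1,\dots,x_4)$ is a bijection from $\Z^4$ onto the tuples satisfying those constraints by writing down its inverse.

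Case~1 is immediate. In Case~2, only $b_2=\tfrac12(1+\sqrt{D})$ carries a denominator, yielding $x_1=2y_1+y_2$, $x_2=y_2$, $x_3=2y_3$, $x_4=2y_4$; this forces $x_1\equiv x_2\pmod{2}$ and $x_3\equiv x_4\equiv 0\pmod{2}$. Case~3 is analogous once one uses $\rho=b_3+b_4$ and $\sigma=b_3-b_4$, which gives $x_3\equiv x_4\pmod 2$ alongside the same $x_1\equiv x_2\pmod 2$ coming from $b_2$. In each case the inverse map is immediate from the formulas for $x_i$ in terms of $y_i$.

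The bookkeeping is heaviest in Cases~4 and~5, because three of the four basis elements carry denominators of $2$ or $4$. In both cases a direct expansion gives
\[
x_1=4y_1+2y_2+y_3+y_4,\quad x_2=2y_2+y_3-y_4,\quad x_3=y_3+y_4,\quad x_4=\epsilon(y_3-y_4),
\]
where $\epsilon=+1$ in Case~4 and $\epsilon=-1$ in Case~5. The parities $x_1\equiv x_3$, $x_2\equiv x_4$, and $x_3\equiv x_4\pmod 2$ (the last from $x_3-\epsilon x_4=2y_4$) combine to the claimed $x_1\equiv x_2\equiv x_3\equiv x_4\pmod 2$, and the identity $x_1-x_2-x_3+\epsilon x_4=4y_1$ supplies the mod-$4$ condition, which reads $x_1-x_2-x_3+x_4\equiv 0\pmod 4$ in Case~4 and $x_1-x_2-x_3-x_4\equiv 0\pmod 4$ in Case~5. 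Conversely, given any $(x_1,\dots,x_4)$ satisfying the stated conditions I would solve in order for $y_4,y_3$ from $x_3,x_4$, then for $y_2$ from $x_2$, and finally for $y_1$ from the mod-$4$ condition, the integrality of each step being handed to me by the corresponding congruence.

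No deep ideas are required; the only hurdle is tracking the sign $\epsilon$ that distinguishes Case~4 from Case~5 and checking that the mod-$2$ conditions on $x_1,x_2$ are genuine consequences of those on $x_3,x_4$ rather than independent constraints.
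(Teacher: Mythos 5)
Your proposal is correct and is exactly the verification the paper leaves implicit: the paper states that the proposition ``follows directly'' from the integral bases listed just above it and gives no further argument, and your change of coordinates from the basis coefficients $(y_1,\dots,y_4)$ to $(x_1,\dots,x_4)$, together with the explicit inverse, is the routine check being elided. I verified your formulas in Cases 4 and 5 (including the sign $\epsilon$ and the identity $x_1-x_2-x_3+\epsilon x_4=4y_1$), and they match the stated congruence conditions.
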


%%%%%%%%%%%%%%%%%%%%%%%%%%%%%%%%%%%%%%%%%%%%%%%%%%%%%%%%
%%%%%%%%%%%%%%%%%%%%%%%%%%%%%%%%%%%%%%%%%%%%%%%%%%%%%%%%
\section{Real Biquadratics}\label{section:realbiquadratics}
%%%%%%%%%%%%%%%%%%%%%%%%%%%%%%%%%%%%%%%%%%%%%%%%%%%%%%%%
%%%%%%%%%%%%%%%%%%%%%%%%%%%%%%%%%%%%%%%%%%%%%%%%%%%%%%%%

Let $K=\Q(\sqrt{ml},\sqrt{nl})$ as described in Section~\ref{section:introbiquad}. We assume that $l < m< n$.
The discriminant satisfies $l^2m^2n^2\leq D_K\leq 64l^2m^2n^2$. The theoretical bounds  are given in $(***)$ and the exponents on the discriminant are $\tfrac16$ and $\tfrac12$. 
We first determine a lower bound for $M(\Ox_K)$ that is a constant times $n$. We use this to show that many real biquadratics  satisfy a lower bound with exponent on the discriminant of $\tfrac14$, improving the known bound of $\tfrac16$.  
We then show that assuming the ABC conjecture,  for any $\frac{p}{q}$ satisfying $\tfrac16 \leq \tfrac{p}{q} \leq \tfrac14$,  there are infinitely many real quadratics with $M(\Ox_K)$ a constant times $D_K^{\frac{p}{q}}$.

From Section~\ref{section:introbiquad}, an element  $\alpha=\alpha_1$ in $\Ox_K$ can be expressed as \[\alpha_1=\tfrac{1}{4}(a+b\sqrt{ml}+c\sqrt{nl}+d\sqrt{mn})\] for some $a,b,c,d\in\mathbb{Z}$. The other conjugates of $\alpha_1$ are 
\begin{align*}
\alpha_2&=\tfrac{1}{4}(a-b\sqrt{ml}+c\sqrt{nl}-d\sqrt{mn}), \\
\alpha_3&=\tfrac{1}{4}(a+b\sqrt{ml}-c\sqrt{nl}-d\sqrt{mn}),\\
\alpha_4&=\tfrac{1}{4}(a-b\sqrt{ml}-c\sqrt{nl}+d\sqrt{mn}).
\end{align*}
If $\alpha$ is a generator then either $c\neq 0$ or $d\neq 0$.

%%%%%%%%%%%%%%%%%%%%%%%%%%%%%%%%%%%%%%%%%%%%%%%%%%%%%%%%
\subsection{Real Biquadratics Lower Bounds}
%%%%%%%%%%%%%%%%%%%%%%%%%%%%%%%%%%%%%%%%%%%%%%%%%%%%%%%%%

\begin{prop}\label{Liouvillerealbound} 
Assume that $0<l<m<n$ are pairwise coprime square-free integers. For $K=\mathbb{Q}(\sqrt{ml}, \sqrt{nl})$, we have
\[ \tfrac{1}{48} n \leq M(\mathcal{O}_K).\]
\end{prop}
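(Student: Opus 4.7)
The plan is to analyze the four conjugates of a generator $\alpha\in\mathcal{O}_K$ directly; no Liouville input will be needed, and the argument in fact gives the sharper bound $M(\alpha)\geq n/16$. Using the integral basis from Section~\ref{section:introbiquad}, I would write $\alpha=\alpha_1=\tfrac14(a+b\sqrt{ml}+c\sqrt{nl}+d\sqrt{mn})$ with $a,b,c,d\in\Z$ (the parity restrictions in certain residue cases are irrelevant to what follows), and read off the three other conjugates from the sign flips listed in the excerpt. The three proper quadratic subfields of $K$ are $\Q(\sqrt{ml}),\Q(\sqrt{nl}),\Q(\sqrt{mn})$; $\alpha$ lies in one of them precisely when $(c,d)$, $(b,d)$, or $(b,c)$ equals $(0,0)$. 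Hence if $\alpha$ generates $K$, then at least two of $b,c,d$ are nonzero, and in particular $(c,d)\neq(0,0)$.

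The computation at the heart of the argument is the product identity
\[ (\alpha_1-\alpha_3)(\alpha_2-\alpha_4)=\tfrac14\bigl(c\sqrt{nl}+d\sqrt{mn}\bigr)\bigl(c\sqrt{nl}-d\sqrt{mn}\bigr)=\tfrac{n}{4}\bigl(c^2l-d^2m\bigr). \]
Since $l$ and $m$ are coprime positive squarefree integers, the equation $c^2l=d^2m$ forces $l\mid d^2$ and $m\mid c^2$, hence $l\mid d$ and $m\mid c$; an infinite descent then yields $c=d=0$. So for a generator, $c^2l-d^2m$ is a nonzero integer, and $|(\alpha_1-\alpha_3)(\alpha_2-\alpha_4)|\geq n/4$.

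To finish, I would convert this into a lower bound on $M(\alpha)$ using the triangle inequality:
\[ \tfrac{n}{4}\leq|\alpha_1-\alpha_3|\,|\alpha_2-\alpha_4|\leq(|\alpha_1|+|\alpha_3|)(|\alpha_2|+|\alpha_4|)=\sum_{\substack{i\in\{1,3\}\\j\in\{2,4\}}}|\alpha_i\alpha_j|. \]
For any algebraic integer $\alpha$ of degree four and any $i\neq j$, one has $|\alpha_i\alpha_j|\leq\max(1,|\alpha_i|)\max(1,|\alpha_j|)\leq\prod_k\max(1,|\alpha_k|)=M(\alpha)$. So the four-term sum is at most $4M(\alpha)$, and therefore $M(\alpha)\geq n/16\geq n/48$.

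The only things requiring care are the product identity, the descent showing $c^2l=d^2m\Rightarrow c=d=0$, and verifying that every $\alpha\in\mathcal{O}_K$ can be written in the stated form with $a,b,c,d\in\Z$; the last is immediate from Section~\ref{section:introbiquad}, since the $f=\tfrac12$ cases merely impose parity conditions on $a,b,c,d$ without changing the shape of the expression. The constant $48$ in the statement is slack; the argument above actually delivers $M(\alpha)\geq n/16$.
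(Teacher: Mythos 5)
Your argument is correct, and it takes a genuinely different route from the paper. The paper's proof fixes a preferred conjugate so that $\alpha_1\ge\tfrac{d}{4}\sqrt{mn}$ (or $\tfrac{c}{4}\sqrt{nl}$ when $d=0$), sets $n^{\lambda}=\max\{|\alpha_2|,|\alpha_4|\}$, and invokes Liouville's theorem for $\sqrt{m/l}$ to force $d\ge\tfrac{\sqrt l}{4}\mu\,n^{1/2-\lambda}$, splitting into the cases $d=0$ and $d\neq 0$; multiplying $|\alpha_1|$ by $n^{\lambda}$ then gives $\tfrac1{48}n$. You instead use the identity
\[
(\alpha_1-\alpha_3)(\alpha_2-\alpha_4)=\tfrac{n}{4}\bigl(c^2l-d^2m\bigr),
\]
note that $c^2l-d^2m$ is a nonzero integer for any generator (nonzero because $ml$ is squarefree and at least $2$, so $c^2l=d^2m$ forces $c=d=0$), and convert $\tfrac n4\le(|\alpha_1|+|\alpha_3|)(|\alpha_2|+|\alpha_4|)$ into $4M(\alpha)$ via the standard bound $|\alpha_i\alpha_j|\le\prod_k\max(1,|\alpha_k|)=M(\alpha)$, valid since an integral generator has monic minimal polynomial. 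In effect you have inlined the proof of Liouville's theorem for the quadratic irrational $\sqrt{m/l}$ (the inequality $|c^2l-d^2m|\ge1$ is exactly what drives $\mu(\sqrt{m/l})$), but the packaging buys you something real: no case split on $d=0$, no selection of a preferred conjugate, and the sharper constant $\tfrac1{16}$ in place of $\tfrac1{48}$. The supporting steps all check out: every element of $\mathcal{O}_K$ has the shape $\tfrac14(a+b\sqrt{ml}+c\sqrt{nl}+d\sqrt{mn})$ with $a,b,c,d\in\Z$ (the $f=\tfrac12$ cases just absorb a factor of $2$ into the coefficients), and a generator must indeed have $(c,d)\neq(0,0)$.
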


\begin{proof} Suppose $\alpha=\alpha_1\in\mathcal{O}_K$ generates $K$ with the notation above.	We will consider the cases $d=0$ and $d\neq 0$ separately.  First, assume that $d\ne 0$. By choosing a suitable conjugate of $\alpha_1$, we may assume that $d>0$ and $c\ge0$. Additionally, we can assume that $\alpha_1\ge \frac{d}{4}\sqrt{mn}$. To see this, if $\alpha_1< \frac{d}{4}\sqrt{mn}$, then $a+b\sqrt{ml}+c\sqrt{nl}<0$ and
\[-\alpha_3=\tfrac{1}{4}(-a-b\sqrt{ml}+c\sqrt{nl}+d\sqrt{mn})> \tfrac{1}{4}(2c\sqrt{nl}+d\sqrt{mn})\ge \tfrac{d}{4}\sqrt{mn},\] and we can consider the primitive element $-\alpha_3$, which will satisfy the assumptions and have the same Mahler measure as $\alpha_1$.  Similarly, if $d=0$ then, $c\neq 0$, and, as above, we may assume that $c>0$ and $\alpha_1\ge \frac{c}{4}\sqrt{nl}$.

Let $\lambda$ be such that $n^{\lambda}=\operatorname{max}\{|\alpha_2|,|\alpha_4|\}$. Then, $-n^\lambda\le \alpha_i\le n^\lambda$ for $i=2,4$, and their difference implies that  
\[|c\sqrt{nl}-d\sqrt{mn}|\le 4n^\lambda.\]
When $d=0$ this implies that  $c\sqrt{nl}\le 4n^\lambda$. It follows that 
\[M(\alpha_1)\ge |\alpha_1|n^\lambda\ge \left(\frac{c}{4}\sqrt{nl}\right)^2\geq\tfrac{1}{16}nl \geq \tfrac1{48}n\]
as required. 

We will now assume $d\neq 0$. Dividing by $d\sqrt{nl}$, we get  
\[
\left|\frac{c}{d}-\sqrt{\frac{m}{l}}\right|\le\frac{4n^\lambda}{d\sqrt{nl}}=\frac{4}{d\sqrt{l} n^{\frac12-\lambda}}.
\]
We are now in a position to use Liouville's Theorem  (Theorem~\ref{thm.Liou}).  
The value $\mu=\mu\left(\sqrt\frac ml\right)$ is 
\[\mu=\frac{1}{l+2\sqrt{ml}}\geq\frac{1}{3\sqrt{ml}}.\]
If $d<\frac{1}{4}\mu\sqrt{l}n^{\frac12-\lambda}$, then the above inequality implies that
\[\left|\frac{c}{d}-\sqrt{\frac{m}{l}}\right|<\frac{\mu}{d^2},\] 
which contradicts the inequality of Theorem \ref{thm.Liou}. So, we must have $d\ge\frac{\sqrt{l}}{4}\mu n^{\frac12-\lambda}$. Since $\alpha_1\geq \tfrac d4\sqrt{mn}$, we have
\[ |\alpha_1|\ge \frac{\sqrt{ml}}{16}\mu n^{1-\lambda} = \tfrac{1}{48} n^{1-\lambda},\] 
and consequently 
\[M(\alpha_1)\ge |\alpha_1| \max\{ |\alpha_2|, |\alpha_4|\} =|\alpha_1|n^\lambda\ge  \tfrac{1}{48}n.\]

\end{proof}

For the biquadratics $K=\Q(\sqrt{m},\sqrt{n})$ with $\gcd(m,n)=1$, the following corollary improves the exponent $\frac16$ of the discriminant on the lower bound to $\frac14$. 
\begin{cor}\label{cor:realbiquadlis1} 
If $K=\Q(\sqrt{m},\sqrt{n})$, where $m, n>1$ are square-free and $\gcd(m,n)=1$, then 
\[ \tfrac{1}{96\sqrt2}D_K^\frac14\leq M(\mathcal{O}_K).\]
\end{cor}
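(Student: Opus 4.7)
The plan is to obtain this corollary as a direct consequence of Proposition~\ref{Liouvillerealbound} applied with $l=1$, combined with the discriminant formula recalled in Section~\ref{section:introbiquad}.

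First I would set up notation: since $m,n>1$ are distinct square-free coprime integers, we may assume (by symmetry of the presentation $\Q(\sqrt{m},\sqrt{n})$) that $1<m<n$. This puts us in the situation of Proposition~\ref{Liouvillerealbound} with the parameter $l$ taken to be $1$, so that $0<l<m<n$ are pairwise coprime and square-free. The proposition then yields immediately
\[
\tfrac{1}{48}\,n \;\leq\; M(\mathcal{O}_K).
\]

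Next I would convert the right-hand side of the desired inequality, which involves $D_K$, into a statement about $n$. From Section~\ref{section:introbiquad} we have $D_K = c\,(lmn)^2 = c\,(mn)^2$ with $c\in\{1,16,64\}$, so in all cases
\[
D_K \;\leq\; 64\,(mn)^2 \;\leq\; 64\,n^4,
\]
using $m<n$ in the second inequality. Taking fourth roots gives $D_K^{1/4}\leq 64^{1/4}\,n = 2\sqrt{2}\,n$, i.e. $n\geq D_K^{1/4}/(2\sqrt{2})$.

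Combining the two displayed inequalities,
\[
M(\mathcal{O}_K)\;\geq\;\tfrac{1}{48}\,n\;\geq\;\tfrac{1}{48}\cdot\tfrac{D_K^{1/4}}{2\sqrt{2}}\;=\;\tfrac{1}{96\sqrt{2}}\,D_K^{1/4},
\]
which is the stated bound. There is no real obstacle here — the work is already contained in Proposition~\ref{Liouvillerealbound}; the only thing to check is that the worst case of the constant $c$ in the discriminant formula ($c=64$, which occurs for $(m,n)\equiv(2,3)\bmod 4$ after possibly swapping $m$ and $n$) is what produces the $96\sqrt{2}$ in the denominator, and that the constraint $l<m<n$ is vacuously satisfiable by taking $l=1$ once $m,n>1$ are assumed distinct.
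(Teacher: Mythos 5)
Your proposal is correct and follows essentially the same route as the paper: apply Proposition~\ref{Liouvillerealbound} with $l=1$ to get $M(\mathcal{O}_K)\geq \tfrac{1}{48}n$, then use $D_K\leq 64(mn)^2$ together with $m<n$ to convert $n$ into $\tfrac{1}{2\sqrt{2}}D_K^{1/4}$. The only cosmetic difference is that the paper writes the comparison as $n\geq\sqrt{mn}$ and $\sqrt{8mn}\geq D_K^{1/4}$ rather than bounding $mn\leq n^2$, which is the same inequality in different clothing.
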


\begin{proof}
In this case, $m^2n^2\leq D_K\leq 64m^2n^2$. Asssuming $n\geq m$, we have $n\geq\sqrt{mn}$, and, by Proposition~\ref{Liouvillerealbound},
$$M(\mathcal{O}_K)\geq \tfrac{1}{48}\sqrt{mn}=\tfrac{1}{96\sqrt2}\sqrt{8mn}\geq\tfrac{1}{96\sqrt2}D_K^\frac14.$$
\end{proof}

%%%%%%%%%%%%%%%%%%%%%%%%%%%%%%%%%%%%%%%%%%%%%%%%%%%%%%%%%
\subsection{Real Biquadratics Conditional Bounds}
%%%%%%%%%%%%%%%%%%%%%%%%%%%%%%%%%%%%%%%%%%%%%%%%%%%%%%%%%

We now prove Theorem~\ref{allexpreal}. 

%\begin{theorem}\label{allexpreal}
%Let $\tfrac16 \le \tfrac{p}{q} \le \tfrac12$ be a rational number. There are absolute  constants $c_1, c_2>0$ such that assuming the ABC conjecture there are infinitely many real biquadratic fields $K$ for which 
%\[ c_1 D_K^{\frac{p}{q}} \le M(\mathcal{O}_K) \le c_2 D_K^{\frac{p}{q}}.\]
%\end{theorem}

\begin{proof}[proof of Theorem~\ref{allexpreal}]
	We can express $\tfrac{p}q$ as $\frac{t}{2(t+s+r)}$, where $0\leq r \leq s\leq t$ are integers. For instance, we may take $t = 2p$. If $2p \leq q - 2p$, then set $s = 2p$ and $r = q - 4p$. Otherwise, set $s = q - 2p$ and $r = 0$.
	
	First, assume $0<r<s<t$. Define the following
	\[
		l(x)=x^{2r}+2,\ \ m(x)=x^{2s}+2x^{2s-2r}+2,\ \ n(x)=x^{2t}+2x^{2t-2r}+2x^{2t-2s}+2.
	\]
	Each of $l(x), m(x),$ and $n(x)$ is Eisenstein and thus irreducible. As a result, the product $l(x)m(x)n(x)$ has no repeated roots. Moreover, $l(1)m(1)n(1)=105$ is square-free. By Granville's work stated as Theorem~\ref{thm:Granville} above, 
	 assuming the ABC conjecture, there exist infinitely many positive integers $k$ such that $l(k), m(k),$ and $n(k)$ are square-free and pairwise coprime. 
	
	For such $k$, consider the field $K = \mathbb{Q}(\sqrt{ml}, \sqrt{nl})$, where $l = l(k), m = m(k),$ and $n = n(k)$. We claim that as $k$ varies, this family of fields satisfies the stated property. The discriminant of $K$ is bounded by
	\[
	k^{4t+4s+4r} \leq D_K \leq 65k^{4t+4s+4r},
	\]
	when $k$ is large enough. In what follows, inequalities that depend on $k$ being large enough will not be explicitly stated, as the context will make it clear. Using Proposition~\ref{Liouvillerealbound}, we find
	\[
	\frac{1}{48(65^\frac{p}{q})}D_K^{\frac{p}q} \leq   \tfrac{1}{48}k^{2t} \leq M(\mathcal{O}_K).
	\]
	
	Next we derive an upper bound for $M(\mathcal{O}_K)$. 	Let
	\begin{align}\label{eq.alpha}
		a=\lfloor \sqrt{mn}\rfloor,\quad b = k^{t-r},\quad c = k^{s-r},\quad \alpha_1=a+b\sqrt{ml}+c\sqrt{nl}+\sqrt{mn},
	\end{align}
	and let $\alpha_2, \alpha_3, \alpha_4$ denote the other conjugates of $\alpha_1$, as defined earlier.
	It is clear that \[\left |\alpha_1\right |\le 5 k^{t+s}.\] 
	Observe that
	\[
	\left| b\sqrt{ml} - c\sqrt{nl} \right| = \left| \frac{l(b^2m - c^2n)}{b\sqrt{ml} + c\sqrt{nl}} \right| = \left| \frac{2lk^{2s-2r}}{b\sqrt{ml} + c\sqrt{nl}} \right| \leq 3,
	\]
	\[
	\left| c\sqrt{nl} - \sqrt{mn} \right| = \left| \frac{n(c^2l - m)}{c\sqrt{nl} + \sqrt{mn}} \right| = \left| \frac{2n}{c\sqrt{nl} + \sqrt{mn}} \right| \leq 3k^{t-s}.
	\]	
	From these, we obtain that
	\begin{align*}
	\left |\alpha_2\right |& =\left|a-b\sqrt{ml}+c\sqrt{nl}-\sqrt{mn}\right|\le \left|a-\sqrt{mn}\right|+\left|b\sqrt{ml}-c\sqrt{nl}\right|\le 4\\
	\left |\alpha_3\right |&=\left|a+b\sqrt{ml}-c\sqrt{nl}-\sqrt{mn}\right|\le \left|a-\sqrt{mn}\right|+\left|b\sqrt{ml}-c\sqrt{nl}\right|\le 4\\
	\left |\alpha_4\right |&=\left | a-\sqrt{mn}-b\sqrt{ml}+c\sqrt{nl}-2c\sqrt{nl}+2\sqrt{mn} \right |\\
		&\le \left | a-\sqrt{mn}\right|+\left| b\sqrt{ml}-c\sqrt{nl}\right|+2\left|c\sqrt{nl}-\sqrt{mn} \right |
		\le 7k^{t-s}.
	\end{align*}
	Therefore, $$M(\mathcal{O}_K)\le M(\alpha_1)\le 560k^{2t}\le 560 \: D_K^{\frac{p}q}.$$

	It remains to consider those cases when $0\leq r \leq s \leq t$ which do not satisfy $0<r<s<t$. We define the polynomials $l(x)$, $m(x)$, and $n(x)$ as follows: \\
	If $0<r<s=t$ we let 
	\[ l(x)=x^{2r}+2,\quad m(x)=x^{2t}+2x^{2t-2r}+2,\quad n(x)=x^{2t}+2x^{2t-2r}+10.\]
	If $0<r=s<t$ we let 
	\[ l(x)=x^{2r}+2,\quad m(x)=x^{2r}+6,\quad n=x^{2t}+6x^{2t-2r}+6.\]
	If $0=r<s<t$ we let
	\[ l(x)=1,\quad m(x)=x^{2s}+2,\quad n(x)=x^{2t}+2x^{2t-2s}+2.\]
	If $0<r=s=t$ we let
	\[ l(x)=x,\quad m(x)=x+1,\quad n(x)=x+2.\]
	If $0=r<s=t$ we let
	\[ l(x)=1,\quad m(x)=x,\quad n(x)=x+1.\]
	If  $0=r=s<t$ we let
	\[ l(x)=1,\quad m(x)=2, \quad n(x)=x^2+1.\]
	The families of fields $K$ considered in these cases are defined analogously using $l(x)$, $m(x)$, and $n(x)$ as in the case $0 < r < s < t$. The bounds for $D_K$ and the lower bound for $M(\mathcal{O}_K)$ are obtained similarly, requiring only minor adjustments. The choice of $\alpha_1$ used to obtain an upper bound is as follows. In the first four cases, $\alpha_1$ is taken as in \eqref{eq.alpha}. In the fifth case, we take
	\[
	\alpha_1 = \sqrt{k} + \sqrt{k+1}.
	\]
	In the last case, we take
	\[
	\alpha_1 = k\sqrt{2} + \sqrt{2(k^2 + 1)}.
	\]
	The details are similar to the general case.
\end{proof}

%%%%%%%%%%%%%%%%%%%%%%%%%%%%%%%%%%%%%%%%%%%%%%%%%%%%%%%%%
\subsection{Real Biquadratics Unconditional Bounds}
%%%%%%%%%%%%%%%%%%%%%%%%%%%%%%%%%%%%%%%%%%%%%%%%%%%%%%%%%

We obtain the following unconditional corollaries for the exponents of $\tfrac16$, $\tfrac14$, and $\tfrac12$.
\begin{cor}\label{cor:realbiqadratic16}
There are infinitely many positive integers $k$ so that $k(k+1)(k+2)$ is square-free. For large enough such k, the fields $K_k=\mathbb{Q}(\sqrt{k(k+1)},\sqrt{k(k+2)})$ satisfy
\[
4^{-\frac{2}{3}} D_K^\frac16\leq M(\mathcal{O}_K)\leq80 \: D_K^\frac16.
\]
\end{cor}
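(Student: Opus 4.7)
The plan is to specialize the construction in the proof of Theorem~\ref{allexpreal} for the exponent $\tfrac{p}{q}=\tfrac{1}{6}$ (i.e.\ the case $0<r=s=t$ in that argument) and to replace the ABC-conditional appeal to Granville's theorem by Hooley's unconditional result for square-free values of cubic polynomials. Hooley guarantees that $f(k)=k(k+1)(k+2)$ is square-free for a positive density of positive integers $k$, which gives infinitely many such $k$. For any such $k$, a short parity argument forces $k\equiv 1\pmod 4$: if $k$ is even then $4\mid k(k+2)$, and if $k\equiv 3\pmod 4$ then $4\mid k+1$. Hence $k,\,k+1,\,k+2$ are pairwise coprime and individually square-free, so with $l=k$, $m=k+1$, $n=k+2$ the field $K=\Q(\sqrt{ml},\sqrt{nl})$ fits the framework of Section~\ref{section:introbiquad} with $(ml,nl)\equiv(2,3)\pmod 4$.

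Applying the discriminant formula from Section~\ref{section:introbiquad} gives $D_K=64\bigl(k(k+1)(k+2)\bigr)^2$, so that $D_K^{1/6}=2\bigl(k(k+1)(k+2)\bigr)^{1/3}$ lies within a bounded multiplicative factor of $k$. The lower bound $4^{-2/3}D_K^{1/6}\le M(\Ox_K)$ then follows at once from the general totally real quartic bound $(***)$, since $2^{-4/3}=4^{-2/3}$; no new work is needed there.

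For the upper bound, mimic the fourth sub-case of the proof of Theorem~\ref{allexpreal} and take
\[
\alpha = a + \sqrt{k(k+1)}+\sqrt{k(k+2)}+\sqrt{(k+1)(k+2)},\qquad a=\bigl\lfloor\sqrt{(k+1)(k+2)}\bigr\rfloor.
\]
As a sum of algebraic integers, $\alpha\in\Ox_K$, and the coefficients $1,1,1$ of $\sqrt{k(k+1)}$, $\sqrt{k(k+2)}$, $\sqrt{(k+1)(k+2)}$ in its basis expansion are all nonzero, so $\alpha$ lies in none of the three quadratic subfields of $K$ and hence generates $K$. Bound $|\alpha|\le 4(k+2)$ trivially. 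For the other three Galois conjugates, each is an expression of the form $\pm\bigl(a-\sqrt{(k+1)(k+2)}\bigr)\pm\bigl(\sqrt{k(k+2)}-\sqrt{k(k+1)}\bigr)$ or a similar sum of two differences; each such difference is controlled by the identity $\sqrt{A}-\sqrt{B}=(A-B)/(\sqrt{A}+\sqrt{B})$, and a direct computation bounds every such conjugate by an absolute constant independent of $k$. Consequently $M(\alpha)$ is linear in $k$, and combined with $D_K^{1/6}\ge 2k$ this yields $M(\Ox_K)\le M(\alpha)\le 80\,D_K^{1/6}$ for $k$ sufficiently large.

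The main obstacle is purely bookkeeping in the upper bound step: the generic estimate $560k^{2t}$ used in the proof of Theorem~\ref{allexpreal} is far too loose here because $l(x),m(x),n(x)$ are all of degree one, so one must redo the three conjugate estimates with somewhat tighter surd algebra in order to reach the explicit constant $80$. The existence of the $k$'s, the discriminant computation, the verification that $\alpha\in\Ox_K$, and the lower bound are all immediate once the parity analysis is in place.
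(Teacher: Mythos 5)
Your proposal is correct and follows essentially the same route as the paper: the paper's proof of this corollary simply specializes the case $0<r=s=t$ of the proof of Theorem~\ref{allexpreal} (so $l=k$, $m=k+1$, $n=k+2$ and the same generator $\alpha_1=\lfloor\sqrt{mn}\rfloor+\sqrt{ml}+\sqrt{nl}+\sqrt{mn}$ from \eqref{eq.alpha} with $b=c=1$), invokes Hooley for the degree-three polynomial $x(x+1)(x+2)$, and takes the lower bound from $(***)$. Your added details --- the parity argument forcing $k\equiv 1\pmod 4$, hence $(ml,nl)\equiv(2,3)\pmod 4$ and $D_K=64\bigl(k(k+1)(k+2)\bigr)^2$, and the tightened conjugate estimates yielding the constant $80$ --- are exactly the bookkeeping the paper leaves implicit, and they check out.
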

\begin{proof}
    When $0<r=s=t$ in the proof of Theorem \ref{allexpreal}, we have $l(x)m(x)n(x)=x(x+1)(x+2)$, which is of degree $3$.  By Hooley  \cite{MR214556}  there are infinitely many positive integers $k$ such that $k(k+1)(k+2)$ is square-free. The lower bound follows from $(***)$, and proceeding as in the proof of Theorem \ref{allexpreal}  yields the upper bound.
\end{proof}

\begin{cor}\label{cor:realbiqadratic14}
There are infinitely many positive integers $k$ so that $k(k+1)$ is square-free. For large enough such $k$, the fields $K_k=\mathbb{Q}(\sqrt{k},\sqrt{k+1})$ satisfy
\[
\tfrac{1}{137}D_K^\frac14\leq M(\mathcal{O}_K)\leq5D_K^\frac14.
\]
\end{cor}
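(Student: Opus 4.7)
The plan is to specialize the construction in the proof of Theorem~\ref{allexpreal} to the fifth case there, namely $0=r<s=t=1$, with $l(x)=1$, $m(x)=x$, $n(x)=x+1$. Since the product $l(x)m(x)n(x)=x(x+1)$ has degree two, the sieve of Eratosthenes (as Granville remarks immediately after the statement of Theorem~\ref{thm:Granville}) yields unconditionally infinitely many positive integers $k$ for which $k(k+1)$ is square-free. For any such $k$, the integers $m:=k$ and $n:=k+1$ are square-free, pairwise coprime, and neither is a perfect square, so $K_k=\Q(\sqrt{k},\sqrt{k+1})$ is a genuine biquadratic field whose discriminant satisfies $k^2(k+1)^2\leq D_{K_k}\leq 64\,k^2(k+1)^2$.

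For the lower bound I would invoke Corollary~\ref{cor:realbiquadlis1} directly: it gives $M(\mathcal{O}_{K_k})\geq \tfrac{1}{96\sqrt{2}}\,D_{K_k}^{1/4}$, and since $96\sqrt{2}<137$ this already yields $M(\mathcal{O}_{K_k})\geq \tfrac{1}{137}\,D_{K_k}^{1/4}$.

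For the upper bound I would, as in the fifth case of the proof of Theorem~\ref{allexpreal}, take the algebraic integer $\alpha_1=\sqrt{k}+\sqrt{k+1}\in\mathcal{O}_{K_k}$. Its four Galois conjugates $\pm\sqrt{k}\pm\sqrt{k+1}$ are distinct (using that neither $k$ nor $k+1$ is a square), so $\alpha_1$ is a primitive element. The minimal polynomial is the monic quartic $x^4-2(2k+1)x^2+1$, and the two conjugates $\pm(\sqrt{k+1}-\sqrt{k})$ have absolute value less than $1$ while the remaining two, $\pm(\sqrt{k}+\sqrt{k+1})$, exceed $1$ in absolute value. Consequently
\[
M(\alpha_1)=(\sqrt{k}+\sqrt{k+1})^2=2k+1+2\sqrt{k(k+1)}\leq 4k+3\leq 5k
\]
for $k$ sufficiently large, and since $D_{K_k}^{1/4}\geq \sqrt{k(k+1)}\geq k$, we obtain $M(\mathcal{O}_{K_k})\leq M(\alpha_1)\leq 5\,D_{K_k}^{1/4}$, as required.

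There is no real obstacle in this argument beyond unpacking what has already been set up: the lower bound is an immediate consequence of Corollary~\ref{cor:realbiquadlis1}, the only arithmetic subtlety on the upper-bound side is checking that the four conjugates of $\alpha_1$ are distinct (which is automatic from square-freeness), and the quantitative comparison between $M(\alpha_1)$ and $D_{K_k}^{1/4}$ reduces to elementary estimates valid for all sufficiently large $k$.
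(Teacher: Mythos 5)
Your proposal is correct and follows essentially the same route as the paper: the paper's proof also specializes the case $0=r<s=t$ of Theorem~\ref{allexpreal} (with $l=1$, $m=k$, $n=k+1$ and $\alpha_1=\sqrt{k}+\sqrt{k+1}$), gets the square-free values unconditionally from the degree-two sieve, and derives the lower bound from Proposition~\ref{Liouvillerealbound} via the same $\tfrac{1}{96\sqrt{2}}>\tfrac{1}{137}$ computation you route through Corollary~\ref{cor:realbiquadlis1}. Your version simply makes explicit the details the paper leaves implicit (distinctness of conjugates, the exact value $M(\alpha_1)=(\sqrt{k}+\sqrt{k+1})^2$, and the comparison with $D_{K_k}^{1/4}$), all of which check out.
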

\begin{proof}
    If $0=r<s=t$ in the proof of Theorem \ref{allexpreal}, then $l(x)m(x)n(x)=x(x+1)$, which is of degree 2. So, there are infinitely many positive integers $k$ such that $k(k+1)$ is square-free. The rest of the proof follows from the proof of Theorem \ref{allexpreal}.
\end{proof}

\begin{cor}\label{cor:realbiqadratic12}
For any square-free integer $k>1$ with $\gcd(2,k)=1$, the fields $K_k=\mathbb{Q}(\sqrt{2},\sqrt{k})$ satisfy
\[
\tfrac{1}{768} D_K^\frac12\leq M(\mathcal{O}_K)\leq D_K^\frac12.
\]
\end{cor}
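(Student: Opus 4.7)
The plan is short. The upper bound $M(\mathcal{O}_K) \leq D_K^{1/2}$ is immediate from the general theoretical bound $(***)$ for totally real quartic fields, since $K_k = \mathbb{Q}(\sqrt{2},\sqrt{k})$ is totally real for any positive $k$. No work is needed on this side.

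For the lower bound, I would apply Proposition~\ref{Liouvillerealbound} directly to $K_k$, viewed as $\mathbb{Q}(\sqrt{ml},\sqrt{nl})$ with $l=1$, $m=2$, $n=k$. The hypotheses of the proposition are easy to check: any square-free integer $k>1$ with $\gcd(2,k)=1$ satisfies $k\geq 3$, so $0<l<m<n$, and the triple $1,2,k$ is pairwise coprime and square-free. The proposition then yields $M(\mathcal{O}_K) \geq \tfrac{1}{48}n = \tfrac{k}{48}$.

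It remains to rewrite this bound in terms of $D_K$ using the discriminant formula from Section~\ref{section:introbiquad}. With $(ml,nl)=(2,k)$ and $k$ odd, the pair $(ml,nl)\bmod 4$ is, after the admissible reordering, either $(1,2)$ or $(2,3)$, giving $c=16$ or $c=64$ respectively. In either case $D_K = c\,(lmn)^2 = 4ck^2 \leq 256\,k^2$, so $D_K^{1/2}\leq 16k$. Combining this with the lower bound produces
\[
\tfrac{1}{768}\,D_K^{1/2} \;\leq\; \tfrac{16k}{768} \;=\; \tfrac{k}{48} \;\leq\; M(\mathcal{O}_K),
\]
which is the required inequality.

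There is no real obstacle: the corollary follows immediately by pairing Proposition~\ref{Liouvillerealbound} with the explicit discriminant formula, and the only arithmetic verification is that the constants $\tfrac{1}{48}$ and the worst-case bound $D_K^{1/2}\leq 16k$ combine into the stated constant $\tfrac{1}{768}$. No condition that $k$ be sufficiently large is needed, since the argument is valid for all admissible $k\geq 3$.
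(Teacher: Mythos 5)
Your proposal is correct and follows essentially the same route as the paper: the lower bound $M(\mathcal{O}_K)\ge \tfrac{k}{48}$ comes from Proposition~\ref{Liouvillerealbound} with $(l,m,n)=(1,2,k)$, the discriminant bound $D_K^{1/2}\le 16k$ converts this into $\tfrac{1}{768}D_K^{1/2}\le M(\mathcal{O}_K)$, and the upper bound is exactly $(***)$. Your additional verification of the congruence cases for the constant $c$ is a welcome bit of explicitness that the paper's one-line proof omits.
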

\begin{proof}
    The discriminant satisfies $D_{K_k}^\frac{1}{2}\leq16k$. By Proposition \ref{Liouvillerealbound}, we have $\frac{1}{48}k\leq M(\mathcal{O}_K)$. The upper bound follows from $(***)$.
\end{proof}

%\begin{cor}\label{cor:realunconditional}
%There are infinitely many real biquadratic number fields $K$ such that 
%\[
%c_1 |D_K|^s \leq M(\Ox_K) \leq c_2 |D_K|^s 
%\]
%for $s=\tfrac16$, $\tfrac14$, and $\tfrac12$.
%\end{cor}

%\begin{proof}
%	For $(r, s, t) = (0, 0, 1)$ (resp. $(0,1,1)$, $(1, 1, 1)$), the polynomial $l(x)m(x)n(x)$ in the proof of Theorem \ref{allexpreal} has degree $1$ (resp. $2$, $3$). It follows that there are infinitely many positive integers $k$ for which $l(k)m(k)n(k)$ are square-free unconditionally (see \cite{MR1512732} and \cite{MR56635}). This yields the exponent $\tfrac12$ (resp. $\tfrac14$, $\tfrac16$) unconditionally. 
%\end{proof}

%This shows that  the exponents $\tfrac16$ in the lower bound and $\tfrac12$ in the upper bound in $(***)$ are optimal.

%%%%%%%%%%%%%%%%%%%%%%%%%%%%%%%%%%%%%%%%%%%%%%%%%%%%%%%%%
%%%%%%%%%%%%%%%%%%%%%%%%%%%%%%%%%%%%%%%%%%%%%%%%%%%%%%%%%
\section{Imaginary Biquadratics}\label{section:complexbiquads}
%%%%%%%%%%%%%%%%%%%%%%%%%%%%%%%%%%%%%%%%%%%%%%%%%%%%%%%%%
%%%%%%%%%%%%%%%%%%%%%%%%%%%%%%%%%%%%%%%%%%%%%%%%%%%%%%%%%

When representing totally imaginary biquadratics, we will depart slightly from the notation in Section~\ref{section:introbiquad} and write $K=\mathbb{Q}(\sqrt{-ml}, \sqrt{-nl})$, where $m,n,$ and $ l$ are positive pairwise coprime square-free integers. We assume that $l \leq m\leq n$.
The discriminant satisfies $l^2m^2n^2\leq D_K \leq 64l^2m^2n^2$. The theoretical bounds  are given in $(*)$ and $(**)$ and the exponents on the discriminant are $\tfrac15$ and $1$. 

Define
\[
S = \Bigl\{\,
\tfrac{1}{4}(a + b\sqrt{-ml} + c\sqrt{-nl} + d\sqrt{mn})
\;:\; a, b, c, d \in \mathbb{Z} \Bigr\}.
\]
From Section~\ref{section:introcyclic}, we know that $\mathcal{O}_K \subseteq S$. Consequently, 
$$\min_{\alpha\in S}\{M'(\alpha) \;:\; \mathbb{Q}(\alpha)=K\} \le M(\mathcal{O}_K),$$
where $M'$ is given as in the equation \eqref{eq.M'}.
Our next goal is to obtain a lower bound for $M(\mathcal{O}_K)$ by deducing a lower bound for the right-hand side of the last inequality.
Let $\alpha = \alpha_1 \in S$. Then there exist $a, b, c, d \in \mathbb{Z}$ such that
\[
\alpha_1 = \tfrac{1}{4}(a + b\sqrt{-ml} + c\sqrt{-nl} + d\sqrt{mn}).
\]
The other conjugates of $\alpha_1$ are 
\begin{align*}
\alpha_2 & = \tfrac{1}{4}(a-b\sqrt{-ml}-c\sqrt{-nl} + d\sqrt{mn}) \\
 \alpha_3 & = \tfrac{1}{4}(a-b\sqrt{-ml}+c\sqrt{-nl} - d\sqrt{mn} )\\
 \alpha_4 & = \tfrac{1}{4}(a+b\sqrt{-ml}-c\sqrt{-nl} - d\sqrt{mn}).
\end{align*}
Using this notation, $\alpha_2=\overline{\alpha}_1$ and $\alpha_4 = \overline{\alpha}_3$.
Observe that 
\[ |\alpha_1|^2=\alpha_1\alpha_2 = \tfrac{1}{16}\big((a+d\sqrt{mn})^2+ (b\sqrt{ml}+c\sqrt{nl})^2\big) \]
and 
\[ |\alpha_3|^2=\alpha_3\alpha_4 = \tfrac{1}{16}\big((a-d\sqrt{mn})^2+ (b\sqrt{ml}-c\sqrt{nl})^2\big).\]

We encode $\alpha$ by the four-tuple of coefficients $(a,b,c,d)$.
It then suffices to minimize $M'(\alpha)$ over the three families
\begin{equation}\label{em.3cases.biq}
	(0,b,c,0), \quad (a,1,0,d), \quad (a,0,1,d),
\end{equation}
where $b,c,d\neq 0$. 
The nonzero condition ensures that $\alpha$ is a primitive element in the biquadratic field. The reason of using $M'$ rather than $M$ is that elements of these families may not be integral. For example, consider the field $K = \mathbb{Q}(\sqrt{-7}, \sqrt{-14})$.  
Take the element
\[
\alpha = \tfrac{1}{2}(1 + \sqrt{-7} + \sqrt{-14} + \sqrt{2}) \in \mathcal{O}_K,
\]
for which $M'(\alpha) \approx 11.66$.  
On the other hand,
\[
\beta = \tfrac{1}{2}(\sqrt{-7} + \sqrt{-14}) \in S
\]
gives $M'(\beta) \approx 10.20$.  
Although $M'(\beta) < M'(\alpha)$, note that $\beta \notin \mathcal{O}_K$.

\subsection{Lower Bounds}
The following theorem provides a general lower bound for $M(\mathcal{O}_K)$, with the imaginary biquadratic $K$ as above, in terms of $l$ and $n$. 

\begin{prop}\label{Liouvilleimaginarybound} Let $K=\mathbb{Q}(\sqrt{-ml}, \sqrt{-nl})$ be a biquadratic field, where $m,n,l>0$ are pairwise coprime square-free integers with $n > m$. Then
\[
  M(\mathcal{O}_K)\geq
  \begin{cases}
     \frac{1}{256}ln &\text{if $n>l$,}\\
     \frac{1}{2304}l^2 & \text{if $l> n$}. 
  \end{cases}
  \]
 In general, $\frac{1}{256}ln \leq M(\mathcal{O}_K).$ 
\end{prop}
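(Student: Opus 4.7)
The strategy is to exploit the reduction already in place: since $\mathcal{O}_K \subseteq S$ and $M'$ agrees with $M$ on integers, any lower bound for $M'(\alpha)$ over primitive $\alpha \in S$ gives the same lower bound for $M(\mathcal{O}_K)$. So I will bound $M'(\alpha)$ below in each of the three normalized families $(0,b,c,0)$ with $bc \neq 0$, $(a,1,0,d)$ with $d \neq 0$, and $(a,0,1,d)$ with $d \neq 0$ from \eqref{em.3cases.biq}. Throughout I use the elementary inequality $\max(1,x)\max(1,y) \geq xy$ for $x,y\geq 0$, which gives $M'(\alpha) \geq |\alpha_1|^2|\alpha_3|^2$; this lets product bounds transfer directly to $M'(\alpha)$ without worrying about whether individual $|\alpha_i|$ exceed $1$.

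For the first family, a direct computation gives
\[
|\alpha_1|^2|\alpha_3|^2 \;=\; \tfrac{l^2(b^2m - c^2n)^2}{256}, \qquad
|\alpha_1|^2+|\alpha_3|^2 \;=\; \tfrac{l(b^2m + c^2n)}{8}.
\]
Comparing $p$-adic valuations and using that $m$ and $n$ are coprime squarefree with $m \neq n$ rules out $b^2m = c^2n$, so $|\alpha_1|^2|\alpha_3|^2 \geq \tfrac{l^2}{256}$. Since $b^2 m + c^2 n \geq m + n$, averaging shows $\max(|\alpha_1|^2,|\alpha_3|^2) \geq \tfrac{ln}{16}$, hence $M'(\alpha) \geq \max\bigl(\tfrac{l^2}{256},\,\tfrac{ln}{16}\bigr)$.

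For the second family, $16|\alpha_i|^2 = (a\pm d\sqrt{mn})^2 + ml \geq ml$, and the identity
\[
256\,|\alpha_1|^2|\alpha_3|^2 \;=\; (a^2 - d^2 mn + ml)^2 + 4 d^2 m^2 nl \;\geq\; 4 m^2 n l
\]
yields $M'(\alpha) \geq \tfrac{m^2 nl}{64} \geq \tfrac{nl}{64}$. The third family is the symmetric analogue with $ml$ replaced by $nl$, giving $|\alpha_i|^2 \geq \tfrac{nl}{16}$ and $M'(\alpha) \geq \tfrac{n^2 ml}{64}$.

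Assembling these, the general bound $M(\mathcal{O}_K) \geq \tfrac{ln}{256}$ is immediate since every family contributes at least $\tfrac{nl}{64}$. For the sharper bound $\tfrac{l^2}{2304}$ under $l > n$, family 1 already supplies $\tfrac{l^2}{256}$. In families 2 and 3 I split on whether $l \leq 36 m^2 n$ (respectively $l \leq 36 n^2 m$): in the small-$l$ case the product bound rearranges to at least $\tfrac{l^2}{2304}$; in the large-$l$ case $\tfrac{ml}{16}>1$ (respectively $\tfrac{nl}{16}>1$) forces both $|\alpha_i|^2 > 1$, and the product gives $M'(\alpha) \geq \bigl(\tfrac{ml}{16}\bigr)^2 \geq \tfrac{l^2}{256}$. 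The main obstacle is keeping this case split clean; the constant $2304 = 64\cdot 36$ is precisely what is needed to interpolate between the product bound and the pointwise bound $|\alpha_i|^2 \geq \tfrac{ml}{16}$. Beyond that, each case reduces to routine algebraic manipulation.
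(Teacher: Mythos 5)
Your proof is correct, and it reaches the stated bounds by a genuinely different mechanism in the one place where the paper has to work hardest: the family $(0,b,c,0)$ when $l>n$. There the paper invokes Liouville's theorem for $\sqrt{n/m}$, introducing a parameter $\lambda$ with $l^\lambda=|\alpha_3|$ and the constant $\mu\ge \tfrac{1}{3\sqrt{mn}}$, which is where the $2304$ originates. You instead observe that
\[
|\alpha_1|^2|\alpha_3|^2=\tfrac{1}{256}\,l^2\bigl(b^2m-c^2n\bigr)^2
\]
and that $b^2m-c^2n$ is a nonzero integer (if it vanished, $mn$ would be a perfect square, contradicting that it is squarefree and at least $2$), so the product is at least $\tfrac{l^2}{256}$ with no Diophantine approximation needed. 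This is exactly the norm-form argument that underlies Liouville's theorem for quadratic irrationals, but stated directly it is sign-free, avoids choosing a preferred conjugate, and improves the constant in that family from $\tfrac{1}{2304}$ to $\tfrac{1}{256}$. Your treatment of the families $(a,1,0,d)$ and $(a,0,1,d)$ via the identity $256|\alpha_1|^2|\alpha_3|^2=(a^2-d^2mn+ml)^2+4d^2m^2nl$ (which I checked) is likewise a conjugate-free version of the paper's pointwise bounds $16|\alpha_i|^2\ge ml$ after arranging $a,d>0$; both yield the stated conclusions. One small remark: your case split on $l\le 36m^2n$ in families 2 and 3 is unnecessary, since the pointwise bounds you already have give $M'(\alpha)\ge\bigl(\tfrac{ml}{16}\bigr)^2\ge\tfrac{l^2}{256}$ (resp. $\bigl(\tfrac{nl}{16}\bigr)^2\ge\tfrac{l^2}{256}$) unconditionally, so every family in fact clears $\tfrac{l^2}{256}$ and the $\tfrac{1}{2304}$ in the statement is only an artifact of the paper's Liouville constant. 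The reduction to the three families and the passage from $M'$ on $S$ to $M(\mathcal{O}_K)$ are taken as given in both arguments, as the paper sets them up before the proposition.
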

\begin{proof}

If $\alpha=\alpha_1$ is represented by the four-tuple $(a,1,0,d)$, we can assume $a,d>0$ by choosing a preferred conjugate. Then, $|\alpha_3|^2\geq\frac{1}{16}lm$ and
\[
  |\alpha_1|^2\geq
  \begin{cases}
     \tfrac{1}{16}mn &\text{if $n> l$,}\\
     \tfrac{1}{16}lm & \text{if $l> n$}, 
  \end{cases}
  \]
and hence 
\[
  M'(\alpha)\ge|\alpha_1|^2|\alpha_3|^2\geq
  \begin{cases}
     \tfrac{1}{256}lm^2n &\text{if $n> l$,}\\
     \tfrac{1}{256}l^2m^2 & \text{if $l> n$}. 
  \end{cases}
  \]
If $\alpha_1$ is represented by $(a,0,1,d)$, we assume $a,d>0$. Then, $$M'(\alpha)\ge|\alpha_1|^2|\alpha_3|^2\geq\left(\tfrac{1}{16}ln\right)^2=\tfrac{1}{256}l^2n^2.$$
Now, we consider those $\alpha_1$ represented by $(0,b,c,0)$ and assume $b,c>0$. Then, 
$$M'(\alpha)\geq|\alpha_1|^2\geq\tfrac{1}{16}ln.$$
For $l> n$, we again use Liouville's theorem. Clearly, $|\alpha_1|^2\geq\frac{1}{16} c^2ln$. Let $\lambda$ be a number such that $l^\lambda=|\alpha_3|=\frac{1}{4}|b\sqrt{ml}-c\sqrt{nl}|$. Then, $$\left|\sqrt{\frac nm}-\frac{b}{c}\right|=\frac{4l^\lambda}{c\sqrt{ml}}=\frac{4}{cm^{1/2}l^{1/2-\lambda}}.$$
Liouville's theorem implies that $c\geq \frac{1}{4}\mu m^\frac12 l^{\frac12-\lambda}$, where 
$$\mu=\frac{1}{m+2\sqrt{mn}}\geq\frac{1}{3\sqrt{mn}}.$$
Therefore,  $c\geq \frac{1}{12}n^{-\frac12} l^{\frac12-\lambda}$, and hence $$M'(\alpha)\geq |\alpha_1|^2|\alpha_3|^2\geq \tfrac{1}{16} \left(\tfrac{1}{12}n^{-\frac12} l^{\frac12-\lambda}\right)^2nl^{1+2\lambda}=\tfrac{1}{2304}l^2.$$
\end{proof}

We can now improve the exponent $\frac15$ on the discriminant on the left-hand side of $(*)$ and $(**)$ to $\frac14$ with a $\sqrt{l}$ factor. 

\begin{cor}\label{real>14withl}
Let $K=\mathbb{Q}(\sqrt{-ml}, \sqrt{-nl})$ be a biquadratic field, where $m,n,l>0$ are pairwise coprime square-free integers. Then
\[  \frac{\sqrt{l}}{512\sqrt{2}} D_K^\frac14 \leq M(\mathcal{O}_K).\]
\end{cor}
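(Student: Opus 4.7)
The plan is a short calculation combining the discriminant bound with the general lower bound of Proposition~\ref{Liouvilleimaginarybound}. First I would invoke the estimate $D_K \leq 64\, l^2 m^2 n^2$ recorded at the start of Section~\ref{section:complexbiquads}, take fourth roots to obtain $D_K^{1/4} \leq 2\sqrt{2}\,\sqrt{lmn}$, and multiply by $\tfrac{\sqrt{l}}{512\sqrt{2}}$ to arrive at
\[
\frac{\sqrt{l}}{512\sqrt{2}}\, D_K^{1/4} \;\leq\; \frac{l\sqrt{mn}}{256}.
\]
Using the section-wide convention $l \leq m \leq n$ to bound $\sqrt{mn} \leq n$, the right-hand side is at most $\tfrac{ln}{256}$, and the ``in general'' bound $\tfrac{ln}{256} \leq M(\mathcal{O}_K)$ recorded in Proposition~\ref{Liouvilleimaginarybound} then yields the corollary.

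The only wrinkle worth pointing out is verifying that the ``in general'' clause of Proposition~\ref{Liouvilleimaginarybound} is valid in both regimes $n>l$ and $l>n$. In the case $n > l$ it is stated directly. In the case $l > n$, each of the family bounds $\tfrac{l^2 m^2}{256}$, $\tfrac{l^2 n^2}{256}$, and $\tfrac{ln}{16}$ arising respectively from $(a,1,0,d)$, $(a,0,1,d)$, and the non-Liouville part of $(0,b,c,0)$ in the proof of the proposition already dominates $\tfrac{ln}{256}$, using only $l, m, n \geq 1$. Since the Diophantine input (Liouville's theorem) has already been absorbed into the proposition, no additional analytic work is required here; the argument is essentially a rearrangement of constants together with the size ordering on $l, m, n$, and I do not anticipate a substantive obstacle.
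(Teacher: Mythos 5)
Your proposal is correct and follows essentially the same route as the paper: both arguments combine the ``in general'' bound $\tfrac{1}{256}ln \le M(\mathcal{O}_K)$ from Proposition~\ref{Liouvilleimaginarybound} with $D_K \le 64\,l^2m^2n^2$ and $\sqrt{mn}\le n$ (after arranging $m\le n$). Your extra verification of the ``in general'' clause is a harmless addition (though note the bound $\tfrac{l^2m^2}{256}\ge\tfrac{ln}{256}$ uses $l>n$, not merely $l,m,n\ge 1$), since the proposition already asserts that clause.
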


\begin{proof} We may assume $n>m$. Then, by Proposition~\ref{Liouvilleimaginarybound},
$$M(\mathcal{O}_K)\geq\tfrac{1}{256}ln\geq\tfrac{\sqrt{l}}{512\sqrt2}\sqrt{8lmn}\geq\tfrac{\sqrt{l}}{512\sqrt2}D_K^\frac14.$$\end{proof}

An immediate consequence of Corollary~\ref{real>14withl} is that we can improve the exponent $\frac15$ on the discriminant on the left-hand side of $(*)$ and $(**)$ to $\frac14$. Specifically, we have the following.
\begin{theorem}\label{real>14}
Let $K=\mathbb{Q}(\sqrt{-ml}, \sqrt{-nl})$ be a biquadratic field, where $m,n,l>0$ are pairwise coprime square-free integers. Then
\[  \tfrac{1}{512\sqrt{2}} \: D_K^{\frac14} \leq M(\mathcal{O}_K).\]
\end{theorem}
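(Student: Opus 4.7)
The plan is to deduce this essentially as an immediate consequence of Corollary~\ref{real>14withl}, which already gives the sharper statement $\frac{\sqrt{l}}{512\sqrt{2}} D_K^{1/4} \leq M(\mathcal{O}_K)$. The only extra observation needed is that $l$, as a positive square-free integer, satisfies $l \geq 1$, hence $\sqrt{l} \geq 1$; substituting this into the inequality of Corollary~\ref{real>14withl} yields the stated bound.

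So concretely, the proof I would write is one line: given positive pairwise coprime square-free integers $l, m, n$, we may without loss of generality assume $l \leq m \leq n$ (as in the preceding discussion), apply Corollary~\ref{real>14withl}, and then bound $\sqrt{l} \geq 1$ to conclude
\[
M(\mathcal{O}_K) \;\geq\; \frac{\sqrt{l}}{512\sqrt{2}}\, D_K^{1/4} \;\geq\; \frac{1}{512\sqrt{2}}\, D_K^{1/4}.
\]

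There is no real obstacle here; all the work has been done in Proposition~\ref{Liouvilleimaginarybound} (the Liouville-approximation argument that splits into the cases $(0,b,c,0)$, $(a,1,0,d)$, $(a,0,1,d)$ and handles the subcase $l>n$ via Diophantine approximation of $\sqrt{n/m}$) and in Corollary~\ref{real>14withl} (which compares $ln$ with $\sqrt{lmn} = (lmn)$ raised to $\tfrac12$ and then with $D_K^{1/4}$ via $D_K \leq 64 l^2 m^2 n^2$). The present theorem is just the clean, $l$-free packaging of that result, useful because it improves the exponent in the bounds $(*)$ and $(**)$ from $\tfrac15$ to $\tfrac14$ in full generality.
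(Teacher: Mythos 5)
Your proposal is correct and is exactly how the paper obtains this theorem: the paper states it as "an immediate consequence of Corollary~\ref{real>14withl}," i.e.\ one drops the factor $\sqrt{l}\geq 1$ from the sharper bound. Nothing further is needed.
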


The exponent can be further improved for the following family.

\begin{cor}\label{cor-mn} Let $K=\mathbb{Q}(\sqrt{-m}, \sqrt{n})=\mathbb{Q}(\sqrt{-m},\sqrt{-mn})$ be a biquadratic field, where $m,n>0$ are square-free and $gcd(m,n)=1$. Then 
\begin{equation*}
\tfrac{1}{2048} D_K^\frac12 \leq M(\mathcal{O}_K).\end{equation*}
\end{cor}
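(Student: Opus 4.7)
The plan is to rewrite $K$ in the normal form $\mathbb{Q}(\sqrt{-l'm'}, \sqrt{-l'n'})$ used throughout Section~\ref{section:complexbiquads}, invoke the ``in general'' bound of Proposition~\ref{Liouvilleimaginarybound}, and combine with the discriminant bound from Section~\ref{section:introbiquad}.

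As noted in the statement, $K$ can be written as the compositum of its two purely imaginary quadratic subfields: $K=\mathbb{Q}(\sqrt{-m},\sqrt{-mn})$. To place this in the normal form with pairwise coprime radicands, I would factor out the common part $m=\gcd(m,mn)$ from the two radicals and take $l'=m$, $m'=1$, $n'=n$. The hypothesis $\gcd(m,n)=1$ together with $m,n$ squarefree guarantees that $l',m',n'$ are positive, pairwise coprime, and squarefree, and since $K$ is biquadratic we must have $n>1$ and hence $n'>m'$, so that Proposition~\ref{Liouvilleimaginarybound} applies.

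Applying the proposition's ``in general'' bound then yields $M(\mathcal{O}_K) \geq \frac{1}{256}\, l' n' = \frac{mn}{256}$. The discriminant formula recalled in Section~\ref{section:introbiquad} gives $D_K \leq 64(l'm'n')^2 = 64 m^2 n^2$, so $mn \geq \frac{1}{8} D_K^{1/2}$, and chaining these inequalities produces $M(\mathcal{O}_K) \geq \frac{mn}{256} \geq \frac{1}{2048} D_K^{1/2}$, as claimed.

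The only mildly subtle point, and the main thing to get right, is the choice of normal form: one must combine the two purely imaginary quadratic subfields of $K$ (which forces $l'=m$), rather than trying to use the real subfield $\mathbb{Q}(\sqrt{n})$, since the framework of Section~\ref{section:complexbiquads} is set up only for purely imaginary generators. Once this identification is in place, the proof is a direct substitution into the existing proposition, and no further work is needed.
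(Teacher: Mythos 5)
Your proposal is correct and is essentially the paper's own proof: the paper likewise applies the ``in general'' bound of Proposition~\ref{Liouvilleimaginarybound} with the implicit identification $l=m$, $m=1$, $n=n$ and the estimate $D_K^{1/2}\le 8mn$. Your write-up merely makes explicit the normalization and the check that the proposition's hypotheses hold, which the paper leaves tacit.
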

\begin{proof}
By Proposition~\ref{Liouvilleimaginarybound},
\[
\tfrac{1}{2048} D_K^\frac12 \leq \tfrac{1}{2048}(8mn) = \tfrac{1}{256}mn \leq M(\alpha).
\]
\end{proof}

%%%%%%%%%%%%%%%%%%%%%%%%%%%%%%%%%%%%%%%%%%%%%%%%%%%%%%%%%
\subsection{Conditional Bounds}
%%%%%%%%%%%%%%%%%%%%%%%%%%%%%%%%%%%%%%%%%%%%%%%%%%%%%%%%%

Assuming ABC conjecture, we now show that for any rational exponent $\frac{p}{q}$ between $\tfrac14$ and 1 that there are infinitely many biquadratics whose minimal Mahler measure is a constant times $D_K^{\frac{p}{q}}$, proving Theorem~\ref{thm:conditionalimaginary}.

%\begin{theorem}\label{thm:conditionalimaginary}
%Let $\tfrac14 \le \tfrac{p}{q} \le 1$ be a rational number. There are absolute  constants $c_1, c_2>0$ such that assuming the ABC conjecture there are infinitely many non-real biquadratic fields $K$ for which 
%\[ c_1 |D_K|^{\frac{p}{q}} \le M(\mathcal{O}_K) \le c_2 |D_K|^{\frac{p}{q}}.\]
%\end{theorem}

\begin{proof}[proof of Theorem~\ref{thm:conditionalimaginary}]
We prove this in two cases, the first for $\frac14\leq\frac pq\leq\frac12$ and the second for $\tfrac12 \leq \tfrac{p}{q} \leq 1$.

We first consider the case when  $\frac14\leq\frac pq\leq\frac12$. In this case $\tfrac{p}{q}$ can be written as $\frac pq=\frac{t}{2(t+s)}$ with $0\leq s \leq t$. One can choose $t=2p$ and $s=q-2p$.

Assume first $s<t$. By Eisenstein's criterion, $m(x):=x^{2s}+2$ and $n(x):=x^{2t}+2x^{2t-2s}+2$ are both irreducible, and observe that they do not share roots; if $x^{2s}+2=0$, then $x^{2t}+2x^{2t-2s}+2=2$ . Also, $f(x)=m(x)n(x)$ has $f(1)=15$, which is square-free. 
By Granville's work stated as Theorem~\ref{thm:Granville}, assuming the ABC conjecture, 
there are infinitely many positive integers $k$ with $f(k)$ square-free. For these $k$, let $m=m(k)$ and $n=n(k)$ and 
\[K=\Q(\sqrt{-m},\sqrt{-n})=\mathbb{Q}\left(\sqrt{-(k^{2s}+2)}, \sqrt{-(k^{2t}+2k^{2t-2s}+2)}\right).\]
Then, $k^{4t+4s}\leq D_K\leq 65 k^{4t+4s}$ for sufficiently large $k$. All inequalities involving $k$ below are assumed to hold 
for sufficiently large $k$, as needed. By Proposition~\ref{Liouvilleimaginarybound}, 
\[ M(\mathcal{O}_K)\geq\tfrac{1}{256}(k^{2t}+2k^{2t-2s}+2)\geq\tfrac{1}{256}k^{2t}\geq\frac{1}{256(65^\frac{p}{q})}D_K^{\frac pq}.\]

To get an upper bound, consider the algebraic integer $\alpha_1$ represented by $(0,4k^{t-s},4,0)$. Then, 
\[|\alpha_1|^2=\tfrac{1}{16}\left(4k^{t-s}\sqrt{k^{2s}+2}+4\sqrt{k^{2t}+2k^{2t-2s}+2}\right)^2\leq5k^{2t},\]
and 
\begin{align*}
 |\alpha_3|^2
 & =\left(k^{t-s}\sqrt{k^{2s}+2}-\sqrt{k^{2t}+2k^{2t-2s}+2}\right)^2 \\
 & =\frac{4}{\left(k^{t-s}\sqrt{k^{2s}+2}+\sqrt{k^{2t}+2k^{2t-2s}+2}\right)^2} < 1.
 \end{align*}
Thus,
\begin{equation}\label{bi-im-con-up}
M(\mathcal{O}_K)\leq M(\alpha_1)\leq 5k^{2t}\leq 5 D_K^{\frac pq}.
\end{equation}

For $t=s$, consider $m(x)=x$ and $n(x)=x+1$, and consider the algebraic integer $\alpha_1$ represented by $(0,4,4,0)$.

Now we consider the case where $\frac12\leq\frac pq\leq1$.
We write $\frac pq$ as $\frac pq=\frac{t}{t+s}$ with $0\leq s \leq t$. Note that $m(x)=x^s+1$ and $n(x)=x^t+2$ are both irreducible and do not share roots. For $f(x)=m(x)n(x)$, we have $f(1)=6$, which is square-free. Again,  by Theorem~\ref{thm:Granville}, 
there are infinitely many positive integers $k$ with $f(k)$ square-free. For these $k$, let $m=m(k)$ and $n=n(k)$ and 
\[
K=\Q(\sqrt{-n},\sqrt{m}) = \Q(\sqrt{-n},\sqrt{-mn}).
\]
Again, for large enough $k$, $k^{2s+2t}\le D_K\le 65k^{2s+2t}$. By Theorem \ref{Liouvilleimaginarybound},
\[M(\mathcal{O}_K)\geq\tfrac{1}{2304}(k^t+2)^2\geq\tfrac{1}{2304}k^{2t}\geq\frac{1}{2304(65^\frac pq)}D_K^{\frac pq}.\]

For an upper bound, we take the algebraic integer $\alpha_1$ represented by $(0,4,0,4)$. Then,
$$|\alpha_1|^2, |\alpha_3|^2=k^t+k^s+3\leq 3k^t,$$ and hence 
\begin{equation*}%\label{bi-im-con-up2}
M(\mathcal{O}_K)\leq9k^{2t}\leq9D_K^\frac pq.\end{equation*}

\end{proof}

%%%%%%%%%%%%%%%%%%%%%%%%%%%%%%%%%%%%%%%%%%%%%%%%%%%%%%%%%
\subsection{Unconditional Bounds}
%%%%%%%%%%%%%%%%%%%%%%%%%%%%%%%%%%%%%%%%%%%%%%%%%%%%%%%%%

We now show unconditionally that we have infinitely many number fields achieving the exponents of  $ \tfrac14, \tfrac12$, $\tfrac23$ and $1$.

\begin{cor}\label{cor:imagbiqadratic14}
    There are infinitely many positive integers $k$ such that $k(k+1)$ is square-free. For such $k>2$, the fields $K_k=\mathbb{Q}(\sqrt{-k},\sqrt{-(k+1)})$ satisfy
    \begin{equation*}
       \tfrac{1}{512\sqrt{2}}D_{K_k}^\frac{1}{4}\leq M(\mathcal{O}_{K_k})\leq 5D_{K_k}^\frac{1}{4}.
    \end{equation*}
\end{cor}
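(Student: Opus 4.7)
The plan is to combine the unconditional lower bound from Theorem~\ref{real>14} with a direct construction of a small-height integer in $\mathcal{O}_{K_k}$, mirroring the $s=t=1$ boundary case inside the proof of Theorem~\ref{thm:conditionalimaginary} but making the construction wholly explicit so that no appeal to ABC is needed.

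The first step is the existence of infinitely many $k$ with $k(k+1)$ squarefree. The polynomial $f(x)=x(x+1)$ has no repeated roots and is of degree $2$, so by the sieve of Eratosthenes (as noted in the remarks following Theorem~\ref{thm:Granville}) the set of positive integers $k$ with $f(k)$ squarefree is infinite; this can also be quoted from Hooley's result. For each such $k>2$ the integers $l=1$, $m=k$, $n=k+1$ are pairwise coprime and squarefree, so $K_k=\mathbb{Q}(\sqrt{-k},\sqrt{-(k+1)})$ is an imaginary biquadratic of the form considered in Section~\ref{section:introbiquad}, and its discriminant satisfies $k^{2}(k+1)^{2}\le D_{K_k}\le 64\,k^{2}(k+1)^{2}$.

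For the lower bound I would simply invoke Theorem~\ref{real>14}, which gives $\tfrac{1}{512\sqrt{2}}D_{K_k}^{1/4}\le M(\mathcal{O}_{K_k})$ unconditionally, so nothing further is needed on that side. For the upper bound, I would take
\[
\alpha \;=\; \sqrt{-k}+\sqrt{-(k+1)} \;\in\; \mathcal{O}_{K_k},
\]
which in the four-tuple encoding of Section~\ref{section:complexbiquads} corresponds to $(0,4,4,0)$. Primitivity is an almost tautological check: the identity $(\sqrt{-k}+\sqrt{-(k+1)})(\sqrt{-k}-\sqrt{-(k+1)})=1$ shows that $\alpha^{-1}=\sqrt{-k}-\sqrt{-(k+1)}\in\mathbb{Q}(\alpha)$, whence $\tfrac12(\alpha+\alpha^{-1})=\sqrt{-k}$ and $\tfrac12(\alpha-\alpha^{-1})=\sqrt{-(k+1)}$ both lie in $\mathbb{Q}(\alpha)$, giving $\mathbb{Q}(\alpha)=K_k$.

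The Mahler measure now collapses to a single explicit expression. The four $\mathbb{Q}$-conjugates of $\alpha$ are $\pm\alpha$ and $\pm\alpha^{-1}$, with $|\alpha|=\sqrt{k}+\sqrt{k+1}>1$ and $|\alpha^{-1}|=\sqrt{k+1}-\sqrt{k}<1$, so
\[
M(\alpha)\;=\;|\alpha|^{2}\;=\;2k+1+2\sqrt{k(k+1)}.
\]
Using $D_{K_k}^{1/4}\ge\sqrt{k(k+1)}$ from $c\ge 1$ in the discriminant formula, together with $(2k+1)^{2}\le 5k(k+1)$ for $k\ge 1$, one obtains $M(\alpha)\le(2+\sqrt{5})\sqrt{k(k+1)}<5\,D_{K_k}^{1/4}$, completing the chain. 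The only step requiring care is the primitivity verification (ensuring that $\alpha$ genuinely generates the biquadratic rather than a proper subfield) and keeping the constants clean for the upper bound, but neither presents a real obstacle: the slick reciprocal identity $\alpha\alpha_3=1$ does all the work and also explains why the bound $5$ in the corollary is so small despite the degree being $4$.
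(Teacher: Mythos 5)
Your proof is correct and follows essentially the same route as the paper: the lower bound is quoted from Theorem~\ref{real>14}, the squarefree values of the degree-$2$ polynomial $x(x+1)$ come from the unconditional case of Granville's theorem, and your generator $\sqrt{-k}+\sqrt{-(k+1)}$ is exactly the element $(0,4,4,0)$ used in the $s=t$ case of the proof of Theorem~\ref{thm:conditionalimaginary}. Your explicit evaluation $M(\alpha)=(\sqrt{k}+\sqrt{k+1})^{2}$ via the reciprocal identity, together with the check $(2k+1)^{2}\le 5k(k+1)$, is a clean and self-contained way to verify the constant $5$ that the paper obtains by referring back to inequality~\eqref{bi-im-con-up}.
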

\begin{proof}
Considering the case $\frac{1}{4}\leq\frac{p}{q}\leq\frac{1}{2}$ with $s=t$ in the proof of Theorem \ref{thm:conditionalimaginary}, we have $f(x)=x(x+1)$, which is of degree $2$. So, there are infinitely many positive integers $k$ such that $k(k+1)$ is square-free. Theorem \ref{real>14}  and the  inequality \eqref{bi-im-con-up} complete the proof.
\end{proof}

\begin{cor}\label{cor:imagbiqadratic12}
    There are infinitely many positive integers $k$ so that $k(k+1)$ is square-free. For such $k>1$, the fields $K_k=\mathbb{Q}(\sqrt{-(k+1)},\sqrt{-k(k+1)})$ satisfy
    \begin{equation*}
       \tfrac{1}{2048}D_{K_k}^\frac{1}{2}\leq M(\mathcal{O}_{K_k})\leq 9\: D_{K_k}^\frac{1}{2}.
    \end{equation*}
\end{cor}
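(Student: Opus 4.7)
The plan is to treat this as the unconditional, $\tfrac{p}{q}=\tfrac{1}{2}$ version of the second case in the proof of Theorem~\ref{thm:conditionalimaginary}, replacing the polynomials used there with $f(x) = x(x+1)$. Since $f$ has degree $2$, the sieve of Eratosthenes (as Granville remarks following Theorem~\ref{thm:Granville}) yields infinitely many positive integers $k$ for which $k(k+1)$ is square-free, with no appeal to the ABC conjecture.

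For each such $k > 1$, Corollary~\ref{cor-mn} applies to $K_k = \mathbb{Q}(\sqrt{-(k+1)}, \sqrt{-k(k+1)}) = \mathbb{Q}(\sqrt{-(k+1)}, \sqrt{k})$ with its parameters set to $m = k+1$ and $n = k$ (both positive, square-free, and coprime), giving the lower bound $\tfrac{1}{2048}\,D_{K_k}^{1/2} \leq M(\mathcal{O}_{K_k})$. I would also record the elementary discriminant bound $D_{K_k} \geq (k(k+1))^2$, obtained from $D_K \geq (lmn)^2$ in Section~\ref{section:introbiquad} applied to the representation with $(l,m,n) = (k+1,1,k)$.

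For the upper bound I would take the explicit integer $\alpha = \sqrt{k} + \sqrt{-(k+1)}$, corresponding to the four-tuple $(a,b,c,d) = (0,4,0,4)$ used in the second case of the proof of Theorem~\ref{thm:conditionalimaginary}. Squaring gives $\alpha^2 = -1 + 2\sqrt{-k(k+1)}$, and squaring again produces the monic minimal polynomial $x^4 + 2x^2 + (2k+1)^2 \in \mathbb{Z}[x]$, confirming both that $\alpha \in \mathcal{O}_{K_k}$ and that $\alpha$ is a primitive element of $K_k$. All four Galois conjugates $\pm\sqrt{k}\pm\sqrt{-(k+1)}$ have modulus $\sqrt{2k+1}$, giving
\[
M(\alpha) = (2k+1)^2,
\]
and the elementary inequality $(2k+1)^2 \leq 9\,k(k+1)$, valid for all $k \geq 1$, then yields $M(\mathcal{O}_{K_k}) \leq M(\alpha) \leq 9\,D_{K_k}^{1/2}$.

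The only step requiring a short verification is the irreducibility of $x^4 + 2x^2 + (2k+1)^2$ over $\mathbb{Q}$, which is needed so that $\alpha$ is primitive and $M(\alpha)$ is correctly computed from all four conjugates. A direct factor-matching of a hypothetical split into two integer quadratics forces $a^2 = 4k$, i.e., $k$ a perfect square, which is excluded by the hypotheses $k > 1$ and $k(k+1)$ square-free (so $k$ itself is square-free). Everything else is routine bookkeeping parallel to the existing case analysis.
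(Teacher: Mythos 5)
Your proposal is correct and follows essentially the same route as the paper: the unconditional degree-$2$ square-free input, the lower bound via Corollary~\ref{cor-mn}, and the upper bound from the element $(0,4,0,4)$, i.e.\ $\alpha=\sqrt{k}+\sqrt{-(k+1)}$, exactly as in the $s=t=1$ case of Theorem~\ref{thm:conditionalimaginary}. The only difference is that you spell out the minimal polynomial $x^4+2x^2+(2k+1)^2$ and its irreducibility (using that square-free $k>1$ is not a perfect square), details the paper leaves implicit, and these check out.
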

\begin{proof}
As in the previous corollary, there are infinitely many positive integers $k$ such that $k(k+1)$ is square-free. The lower bound follows from Corollary \ref{cor-mn}, and proceeding as in the case $\frac12\leq\frac pq\leq1$ with $s=t=1$ in the proof of Theorem \ref{thm:conditionalimaginary} yields the upper bound.
\end{proof}

\begin{cor}\label{cor:imagbiqadratic23}
    There are infinitely many positive integers $k$ so that $k(k^2+1)$ is square-free. For large enough such $k>1$, the fields $K_k=\mathbb{Q}(\sqrt{-(k^2+1)},\sqrt{-k(k^2+1)})$ satisfy 
    $$\tfrac{1}{37248}D_{K_k}^\frac{2}{3}\leq M(\mathcal{O}_{K_k})\leq4 D_{K_k}^\frac{2}{3}.$$
\end{cor}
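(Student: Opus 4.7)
The plan is to apply the machinery of Theorem~\ref{thm:conditionalimaginary} at the exponent $\tfrac{p}{q}=\tfrac{2}{3}$, which in the parameterization $\tfrac{p}{q}=\tfrac{t}{t+s}$ corresponds to $(t,s)=(2,1)$. In place of the conditional theorem's polynomials $m(x)=x^s+1,\ n(x)=x^t+2$, I would take the simpler choice $m(x)=x$ and $n(x)=x^2+1$, so that $m(x)n(x)=x(x^2+1)$ has degree three with no repeated roots. Hooley's unconditional theorem for cubic polynomials then produces infinitely many positive integers $k$ for which $k(k^2+1)$ is square-free, and for such $k$ the identity $\gcd(k,k^2+1)=1$ automatically makes $k$ and $k^2+1$ individually square-free and coprime.

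For each such $k$, I would set $l=k^2+1$, $m=1$, $n=k$ in the framework of Section~\ref{section:complexbiquads}, so $K_k=\mathbb{Q}(\sqrt{-ml},\sqrt{-nl})$ has $l,m,n$ positive, pairwise coprime, and square-free, with $k^2(k^2+1)^2\leq D_{K_k}\leq 64\,k^2(k^2+1)^2$. Since $l=k^2+1>k=n$, the second case of Proposition~\ref{Liouvilleimaginarybound} gives $M(\mathcal{O}_{K_k})\geq \tfrac{1}{2304}\,l^2=\tfrac{1}{2304}(k^2+1)^2$. Combining with the discriminant upper bound via $64^{2/3}=16$ and $(k^2+1)^{2/3}\geq k^{4/3}$ then yields a bound of the form $M(\mathcal{O}_{K_k})\geq C\,D_{K_k}^{2/3}$ with $C\geq \tfrac{1}{36864}$, which in particular implies the stated constant $\tfrac{1}{37248}$.

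For the upper bound I would follow the construction in the proof of Theorem~\ref{thm:conditionalimaginary} for $\tfrac{p}{q}\geq\tfrac{1}{2}$ and take the integer $\alpha=\sqrt{k}+\sqrt{-(k^2+1)}$, encoded as $(0,4,0,4)$ in the $(a,b,c,d)$ notation. The four Galois conjugates $\pm\sqrt{k}\pm\sqrt{-(k^2+1)}$ all have squared modulus $k+(k^2+1)=k^2+k+1$, so once $\alpha$ is shown to generate $K_k$ we have $M(\alpha)=(k^2+k+1)^2$ and
\[
\frac{M(\alpha)}{D_{K_k}^{2/3}}\;\leq\;\frac{(k^2+k+1)^2}{\bigl[k(k^2+1)\bigr]^{4/3}}\;\longrightarrow\;1\quad\text{as }k\to\infty,
\]
so this ratio is at most $4$ for $k$ sufficiently large. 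The only real step to verify is the degree claim, i.e., the irreducibility of the minimal polynomial $x^4+2(k^2-k+1)x^2+(k^2+k+1)^2$; this is a short check ruling out factorizations of the form $(x^2+b)(x^2+c)$ (whose required discriminant $-16k(k^2+1)$ is negative and hence not a rational square) and $(x^2+ax+b)(x^2-ax+b)$ (which would force $a^2\in\{4k,\,-4(k^2+1)\}$, neither a rational square for square-free $k>1$). All remaining arithmetic is routine bookkeeping of the constants produced by Proposition~\ref{Liouvilleimaginarybound} and the discriminant bound.
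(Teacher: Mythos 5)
Your proposal is correct and follows essentially the same route as the paper: the paper's proof of this corollary simply invokes Hooley for the cubic $x(x^2+1)$ and then runs the case $\tfrac12\le\tfrac pq\le 1$ of Theorem~\ref{thm:conditionalimaginary} with $s=1$, $t=2$ and the polynomials $m(x)=x$, $n(x)=x^2+1$, which is exactly your construction (including the element $(0,4,0,4)$, i.e.\ $\sqrt{k}+\sqrt{-(k^2+1)}$, and the $l>n$ case of Proposition~\ref{Liouvilleimaginarybound}). Your extra verifications (irreducibility of the quartic and the slightly sharper constant $\tfrac1{36864}$ in place of $\tfrac1{37248}$) are sound and only make explicit what the paper leaves to the reader.
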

\begin{proof}
Since $f(x)=x(x^2+1)$ is of degree $3$, by Hooley's work \cite{MR214556}, there are infinitely many positive integers $k$ such that $k(k^2+1)$ is square-free. Proceeding as in the case $\frac12\leq\frac pq\leq1$ with $s=1,\ t=2$ in the proof of Theorem \ref{thm:conditionalimaginary} completes the proof.
 \end{proof}
\begin{cor}\label{cor:imagbiqadratic1}
    For any square-free integer $k>1$ with $\gcd(k,2)=1$, the fields $K_k=\mathbb{Q}(\sqrt{-2k},\sqrt{-k})$ satisfy $$\tfrac{1}{589824}D_{K_k}\leq M(\mathcal{O}_{K_k})\leq D_{K_k}.$$ 
\end{cor}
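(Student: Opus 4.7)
The plan is to apply Proposition~\ref{Liouvilleimaginarybound} directly, by recognizing the family $K_k = \mathbb{Q}(\sqrt{-2k}, \sqrt{-k})$ as an instance of the setup there. Writing $K_k = \mathbb{Q}(\sqrt{-ml}, \sqrt{-nl})$ in the notation of Section~\ref{section:complexbiquads}, I take $l = k$, $m = 1$, $n = 2$. Because $k$ is odd, square-free, and $k>1$ (hence $k\geq 3$), the triple $(l,m,n)$ consists of positive, pairwise coprime, square-free integers with $n>m$. Moreover, $l=k\geq 3>2=n$, which puts us squarely in the second case ($l>n$) of Proposition~\ref{Liouvilleimaginarybound}, giving $M(\mathcal{O}_{K_k}) \geq \tfrac{1}{2304}\,l^2 = \tfrac{1}{2304}\,k^2$.

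I would then combine this with the discriminant bound $D_{K_k} \leq 64\,l^2 m^2 n^2 = 256\,k^2$ recalled at the start of Section~\ref{section:complexbiquads}, which rearranges to $k^2 \geq \tfrac{1}{256}D_{K_k}$ and yields
\[
M(\mathcal{O}_{K_k}) \;\geq\; \frac{1}{2304\cdot 256}\,D_{K_k} \;=\; \frac{1}{589824}\,D_{K_k}.
\]
For the upper bound, the general Minkowski inequality $M(\mathcal{O}_K)\leq D_K$ from Section~\ref{section:background} (also recorded in $(**)$) gives $M(\mathcal{O}_{K_k})\leq D_{K_k}$ with no further work. There is essentially no obstacle here; the only thing to verify carefully is the parity/coprimality bookkeeping needed to enter the hypotheses of Proposition~\ref{Liouvilleimaginarybound}, which is precisely where the hypothesis that $k$ be odd and square-free is used. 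The numerics match cleanly because $2304\cdot 256 = 2^{16}\cdot 9 = 589824$.
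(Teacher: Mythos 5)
Your proposal is correct and follows essentially the same route as the paper: identify $(l,m,n)=(k,1,2)$, invoke the $l>n$ case of Proposition~\ref{Liouvilleimaginarybound} to get $M(\mathcal{O}_{K_k})\geq\tfrac{1}{2304}k^2$, combine with $D_{K_k}\leq 256k^2$, and use the bounds $(*)$/$(**)$ for the upper bound. The parity and coprimality checks you flag are exactly the hypotheses the paper implicitly verifies, and the constant $2304\cdot 256=589824$ matches.
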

\begin{proof}
    The discriminant satisfies $D_{K_k}\leq256k^2$. By Proposition \ref{Liouvilleimaginarybound}, we have $\frac{1}{2304}k^2\leq M(\mathcal{O}_{K_k})$. The upper bound follows from $(*)$ and $(**)$.
\end{proof}

%\begin{cor}\label{cor:complexbiquadraticnoABC}
%There are absolute constants $c_1,c_2>0$ such that there are infinitely many non-real biquadratic number fields $K$ with 
%\[
%c_1 |D_K|^{\frac{p}{q}} \leq M(\Ox_K) \leq c_2 |D_K|^{\frac{p}{q} }
%\]
%for $\tfrac{p}{q} = \tfrac14, \tfrac12$, $\tfrac23$ and $1$.
%\end{cor}

%\begin{proof}
%From the proof of the first case, for $\tfrac14 \leq \tfrac{p}{q} \leq \tfrac12$ we have the following from the proof of Theorem~\ref{thm:conditionalimaginary}. 
%    For $(s,t)=(0,1), (t,t)$, $f(x)$ will be of degree $2$, and so there are infinitely many positive integers $n$ at which $f(x)$ is square-free. Thus, we obtain $C_1|D_K|^\frac 12\leq M(\mathcal{O}_K)\leq C_2|%D_K|^\frac 12$ and $C_1|D_K|^\frac 14\leq M(\mathcal{O}_K)\leq C_2|D_K|^\frac 14$ unconditionally. 
 
%From the proof of the second case, for $\tfrac12 \leq \tfrac{p}{q} \leq 1$ we have the following from the proof of Theorem~\ref{thm:conditionalimaginary}. 
%    For $(s,t)=(0,1), (1,1),(1,2)$, $f(x)$ will be of degree $1, 2,3$, respectively. We recall that for $\deg(f(x))\leq 3$, there are infinitely many positive integers $n$ at which $f(x)$ is square-free. Thus, we get $C_1|D_K|^{\frac 12}\leq M(\mathcal{O}_K)\leq C_2|D_K|^{\frac 12}, C_1|D_K|^{\frac 23}\leq M(\mathcal{O}_K)\leq C_2|D_K|^{\frac 23}$, and $C_1|D_K|\leq M(\mathcal{O}_K)\leq C_2|D_K|$ unconditionally. 
%\end{proof}

%%%%%%%%%%%%%%%%%%%%%%%%%%%%%%%%%%%%%%%%%%%%%%%%%%%%%%%%%
\subsection{Biquadratic fields containing non-trivial roots of unity}\label{section:rootsunity}
%%%%%%%%%%%%%%%%%%%%%%%%%%%%%%%%%%%%%%%%%%%%%%%%%%%%%%%%%

Due to work of  Akhtari, Vaaler, and Widmer \cite[Corollary 1.1]{AVW1}, there are different theoretical upper bounds for $M(\Ox_K)$ amongst totally imaginary biquadratic $K$ which contain either  $\sqrt{-1}$ or $\sqrt{-3}$ and those that do not. Specifically, they show that with $c_K=(\tfrac{2}{\pi})^{r_2}D_K^{\frac12}$  if $\sqrt{-1}, \sqrt{-3}\not\in K$ then $M(\Ox_K)\leq c_K$. Further, if $c_K\leq M(\Ox_K)$ then neither $\sqrt{-1}$ nor $\sqrt{-3}$ is in $K$.   

In this section we determine $\alpha_1\in \Ox_K$ such that $M(\alpha_1) \leq c_K$ for  biquadratic number fields which contain either $\sqrt{-1}$ or $\sqrt{-3}$. The field $\Q(\sqrt{-1}, \sqrt{-3})=\Q(\zeta_{12})$ is the only such field containing both. The other fields can be written as $\Q(\sqrt{-1}, \sqrt{-k})$, or $\Q(\sqrt{-3}, \sqrt{-k})$ where $k$ is square-free, where we allow for the case when $3$ divides $k$.

Consider $K =  \Q(\sqrt{-1}, \sqrt{-k})$. 
If $k\equiv 1\pmod 4$, let
\[ \alpha_1=\tfrac{1}{2}\left((\lfloor{\sqrt{k}}\rfloor+\epsilon)\sqrt{-1}+\sqrt{-k}\right),\] where $\epsilon\in\{0,1\}$ is determined uniquely by the condition that $\lfloor{\sqrt{k}}\rfloor+\epsilon$ is odd. 
Then $M(\alpha_1) \leq c_K$ for $k\geq 4$. 
If $k\equiv2\pmod4$, let
\[ \alpha_1=\tfrac{1}{2}\left(\lfloor{\sqrt{k}}\rfloor+\epsilon+(\lfloor{\sqrt{k}}\rfloor+\epsilon)\sqrt{-1}+\sqrt{-k}+\sqrt{k}\right),\] where $\epsilon\in\{0,1\}$ is determined uniquely by the condition that $\lfloor{\sqrt{k}}\rfloor+\epsilon$ is even. Then, $M(\alpha_1) \leq c_K$ for all $k\geq 4$. If $k\equiv3\pmod{4}$, then $(-1,k)\equiv(3,3)\pmod{4}$ so that we work with $K=\mathbb{Q}(\sqrt{-1},\sqrt{k})$ instead of $\mathbb{Q}(\sqrt{-1},\sqrt{-k})$. Let
\[ \alpha_1=\tfrac{1}{2}\left(1+(\lfloor{\sqrt{k}}\rfloor+\epsilon)\sqrt{-1}+\sqrt{-k}\right),\] where $\epsilon\in\{0,1\}$ is determined uniquely by the condition that $\lfloor{\sqrt{k}}\rfloor+\epsilon$ is odd. Then $M(\alpha_1) \leq c_K$. 
When $k = 2$, we take 
$\alpha_1 = \tfrac{1}{2}(\sqrt{-2} + 2\sqrt{-1} + \sqrt{2})$.
When $k = 3$, we take 
$\alpha_1 = \tfrac{1}{2}(1 + \sqrt{-1} + \sqrt{3} + \sqrt{-3})$.
 In both cases, one can verify that $M(\alpha_1)\leq c_K$.

Consider $K =  \Q(\sqrt{-3}, \sqrt{-k})$ with $\gcd(3,k)=1$. 
If $k\equiv3\pmod{4}$, then let
\[ \alpha_1=\tfrac{1}{4}\Big((2\Big\lfloor{\sqrt{\tfrac{k}{3}}} \Big\rfloor+\epsilon)\sqrt{-3}+2\sqrt{-k}\Big),\] 
where $\epsilon\in\{0,2\}$ is determined uniquely by the condition that $2\Big\lfloor{\sqrt{\tfrac{k}{3}}} \Big\rfloor+\epsilon\equiv2\pmod{4}$. Then, $M(\alpha_1) \leq c_K$  for all $k\geq72.$
If  $\epsilon=0$ then $M(\alpha_1)\leq c_K$. Those $k\le 71$ with $\epsilon=2$ are $k=19,23, 55, 59, 67, 71.$  If $k\equiv2\pmod{4}$, then with 
\[ \alpha_1=\tfrac{1}{2}\Big(2\Big\lfloor{\sqrt{\tfrac{k}{3}}} \Big\rfloor\sqrt{-3}+2\sqrt{-k}\Big)\]
we have $M(\alpha_1) \leq c_K$ for all $k\geq1$.
If $k\equiv1\pmod{4}$, then $(3k,-k)\equiv(3,3)\pmod{4}$ so  we work with $K=\mathbb{Q}(\sqrt{3k},\sqrt{-k})$ instead of $\mathbb{Q}(\sqrt{-3},\sqrt{-k})$. With 
\[ \alpha_1=\tfrac{1}{2}\Big(2\sqrt{-k}+2\Big\lfloor{\sqrt{\tfrac{k}{3}}} \Big\rfloor\sqrt{-3}\Big),\]  then $M(\alpha_1) \leq c_K$ for all $k\geq1$
With $\alpha_1\in\mathcal{O}_K$ chosen as in the table below, we obtain $M(\alpha_1)\leq c_K$ in each remaining case.
 \begin{center}
 	{\renewcommand{\arraystretch}{1.6}
 		\begin{tabular}{|c|c|c|c|}
 			\hline
 			$k$ & $\alpha_1$ & $M(\alpha_1)$ & $c_K$ \\ \hline
 			\rule{0pt}{3.2ex}$19$ & $\frac{1}{4}\left(2 + 4\sqrt{-3} + 2\sqrt{-19}\right)$ & $15.55$ & $23.10$ \\ \hline
 			\rule{0pt}{3.2ex}$23$ & $\frac{1}{4}\left(2 + 4\sqrt{-3} + 2\sqrt{-23}\right)$ & $17.31$ & $27.96$ \\ \hline
 			\rule{0pt}{3.2ex}$55$ & $\frac{1}{4}\left(6\sqrt{-3} + 2\sqrt{-55}\right)$       & $49.00$ & $66.87$ \\ \hline
 			\rule{0pt}{3.2ex}$59$ & $\frac{1}{4}\left(2 + 8\sqrt{-3} + 2\sqrt{-59}\right)$ & $53.61$ & $71.74$ \\ \hline
 			\rule{0pt}{3.2ex}$67$ & $\frac{1}{4}\left(2 + 8\sqrt{-3} + 2\sqrt{-67}\right)$ & $57.35$ & $81.46$ \\ \hline
 			\rule{0pt}{3.2ex}$71$ & $\frac{1}{4}\left(2 + 8\sqrt{-3} + 2\sqrt{-71}\right)$ & $59.19$ & $86.33$ \\ \hline
 		\end{tabular}
 	}
 \end{center}

Now consider $\mathbb{Q}(\sqrt{-3},\sqrt{-k})$ with $\gcd(3,k)=3$. If $k\equiv3\pmod{4}$, then let  $$\alpha_1=\tfrac{1}{4}\Big(2\Big\lfloor{\sqrt{\tfrac{k}{3}}} \Big\rfloor+\epsilon+2\sqrt{-3}+2\sqrt{\tfrac{k}{3}}\Big),$$ where $\epsilon\in\{0,2\}$ is determined uniquely by the condition that $2\Big\lfloor{\sqrt{\tfrac{k}{3}}} \Big\rfloor+\epsilon\equiv0\pmod{4}$. Then, $M(\alpha_1) \leq c_K$  for all $k\geq 91.$
This excludes  $k=15,39,51,87$. If $k\equiv2\pmod{4}$, then we choose $$\alpha_1=\tfrac{1}{2}\Big(2\Big\lfloor{\sqrt{\tfrac{k}{3}}} \Big\rfloor+1+\sqrt{-3}+2\sqrt{\tfrac{k}{3}}\Big).$$
 Then, $M(\alpha_1) \leq c_K$ for all $k\geq23$. This bound excludes $k=6$. If $k\equiv1\pmod{4}$, $(k/3,-k)\equiv(3,3)\pmod{4}$ so this time we work with $K=\mathbb{Q}(\sqrt{k/3},\sqrt{-k})$ instead of $\mathbb{Q}(\sqrt{-3},\sqrt{-k})$.  We choose $$\alpha_1=\tfrac{1}{2}\Big(2\Big\lfloor{\sqrt{\tfrac{k}{3}}} \Big\rfloor+1+2\sqrt{\tfrac{k}{3}}+\sqrt{-3}\Big).$$
Then, $M(\alpha_1) \leq c_K$ for all $k\geq23$.  The only missing integer in this case is $k= 21$. With $\alpha_1\in\mathcal{O}_K$ chosen as in the table below, we obtain $M(\alpha_1)\leq c_K$ in each remaining case.
\begin{center}
	{\renewcommand{\arraystretch}{1.6}
		\begin{tabular}{|c|c|c|c|}
			\hline
			$k$ & $\alpha_1$ & $M(\alpha_1)$ & $c_K$ \\ \hline
			\rule{0pt}{3.2ex}$6$ & $\frac{1}{2}\left( \sqrt{2}+\sqrt{-6}  \right)$ & $4.00$ & $9.73$ \\ \hline
			\rule{0pt}{3.2ex}$15$ & $\frac{1}{4}\left(2\sqrt{-3} + 2\sqrt{5}\right)$       & $4.00$ & $6.08$ \\ \hline
			\rule{0pt}{3.2ex}$21$ & $\frac{1}{2}\left(2 + 2\sqrt{-3} + \sqrt{7}+\sqrt{-21}\right)$ & $21.58$ & $34.04$ \\ \hline
			\rule{0pt}{3.2ex}$39$ & $\frac{1}{4}\left(4 + 2\sqrt{-3} + 2\sqrt{13}\right)$ & $12.00$ & $15.81$ \\ \hline
			\rule{0pt}{3.2ex}$51$ & $\frac{1}{4}\left(8 + 2\sqrt{-3} + 2\sqrt{17}\right)$ & $17.25$ & $20.67$ \\ \hline
			\rule{0pt}{3.2ex}$87$ & $\frac{1}{4}\left(8 + 2\sqrt{-3} + 2\sqrt{29}\right)$ & $28.00$ & $35.26$ \\ \hline
		\end{tabular}
	}
\end{center}

The only cyclic quartic fields with roots of unity other that $\pm 1$ are splitting fields of the cyclotomic polynomials $\Phi_5= x^4+x^2+x+1$ and $\Phi_{10}=x^4-x^3+x^2-x+1$. In both of these cases, the discriminant of the field is $5^3$ and so the $c_K$ value is $c_K=(\tfrac{2}{\pi})^{2}5^{\frac32}=4.531\dots$ and the minimal Mahler measure of a generator is 1 as they are cyclotomics, and $1 \leq c_K$.

%The field $K=\Q(\sqrt{-1}, \sqrt{-3})=\Q(\zeta_{12})$  and is the splitting field of $\Phi_{12}=x^4-x^2+1$, and $M(\Ox_K)=1$ since $M(\zeta_{12})=1$.
%\kate{ I see that with $\alpha = \tfrac12 (\sqrt{-1}+\sqrt{-3})$ (which I think is integral) we get $M(\alpha) = 2$. This might be the smallest non-cyclotomic. Do we want to find the minimal non-root of unity MM? }

%The field  $K=\Q(\zeta_8)= \Q(\sqrt{-1},\sqrt{-2})$ is the splitting field of $\Phi_8=x^4+1$.  As a cyclotomic extension, $M(\Ox_K)=1$ realized by $M(\zeta_8)=1$.
%\kate{ Do we want to determine the minimal torsion free generators?}  

%%%%%%%%%%%%%%%%%%%%%%%%%%%%%%%%%%%%%%%%%%%%%%%%%%%%%%%%
%%%%%%%%%%%%%%%%%%%%%%%%%%%%%%%%%%%%%%%%%%%%%%%%%%%%%%%%
\section{Real Cyclic Quartics}\label{section:rcyclic}
%%%%%%%%%%%%%%%%%%%%%%%%%%%%%%%%%%%%%%%%%%%%%%%%%%%%%%%%
%%%%%%%%%%%%%%%%%%%%%%%%%%%%%%%%%%%%%%%%%%%%%%%%%%%%%%%%

As in Section~\ref{section:introcyclic} we  write a real cyclic quartic number field as $K=(\sqrt{A(D+B\sqrt{D})})$ where $A,B,C,$ and $D$ are rational integers which satisfy $A>0$ and $D=B^2+C^2$ with $B>0$. 
The theoretical bounds for real cyclic quartics are 
\[
4^{-\frac23}D_K^{\frac16} \leq M(\Ox_K) \leq D_K^{\frac12}.
\]
From Section~\ref{section:introcyclic}, an element $\alpha=\alpha_1\in\Ox_K$ can be written as
\[
\alpha_1=\tfrac{1}{4}(x_1+x_2\sqrt{D}+x_3\rho+x_4\sigma),
\quad x_i\in\mathbb{Z},
\]
where
\[
\rho=\sqrt{A(D+B\sqrt{D})},\qquad 
\sigma=\sqrt{A(D-B\sqrt{D})}.
\]
The other conjugates of $\alpha_1$ are
\[
\begin{aligned}
	\alpha_2&=\tfrac{1}{4}(x_1-x_2\sqrt{D}-x_4\rho+x_3\sigma),\\
	\alpha_3&=\tfrac{1}{4}(x_1+x_2\sqrt{D}-x_3\rho-x_4\sigma),\\
	\alpha_4&=\tfrac{1}{4}(x_1-x_2\sqrt{D}+x_4\rho-x_3\sigma).
\end{aligned}
\]
First, we establish a lower bound for the integral Mahler measure which depends on $A$ and $D$.

\begin{prop}\label{Liouvillerealbound_cyc}
	Let $K=\mathbb{Q}(\sqrt{A(D+B\sqrt{D})})$ be a real cyclic quartic field, where $A,B,C,$ and $D$ with $A>0$ satisfy the conditions Section~\ref{section:introcyclic}. Then
	\[ \tfrac1{48} A\sqrt{D} \leq 
	M\bigl(\mathcal{O}_K\bigr).
	\]
\end{prop}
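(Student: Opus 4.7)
The plan is to mirror the structure of Proposition~\ref{Liouvillerealbound}, substituting $\rho$ and $\sigma$ for the roles of $\sqrt{ml}$ and $\sqrt{nl}$. Let $\alpha=\alpha_1\in\mathcal{O}_K$ be a generator of $K$, written in the form above with $x_i\in\mathbb{Z}$. Since $\mathbb{Q}(\sqrt{D})$ is the unique proper nontrivial subfield of $K$, $\alpha$ generates $K$ precisely when $\alpha_1\neq\alpha_3$, equivalently $(x_3,x_4)\neq(0,0)$. The listed conjugate formulas give
\[
\alpha_1-\alpha_3=\tfrac{1}{2}(x_3\rho+x_4\sigma), \qquad \alpha_2-\alpha_4=\tfrac{1}{2}(x_3\sigma-x_4\rho),
\]
so the triangle inequality yields $\max(|\alpha_1|,|\alpha_3|)\geq\tfrac{1}{4}|x_3\rho+x_4\sigma|$ and $\max(|\alpha_2|,|\alpha_4|)\geq\tfrac{1}{4}|x_3\sigma-x_4\rho|$. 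Since $\alpha\in\mathcal{O}_K$ is an algebraic integer, $M(\alpha)=\prod_i\max(1,|\alpha_i|)\geq\max(|\alpha_1|,|\alpha_3|)\cdot\max(|\alpha_2|,|\alpha_4|)$, so it will suffice to bound the product $|x_3\rho+x_4\sigma|\cdot|x_3\sigma-x_4\rho|$ from below.

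The key computation is the identity $\sigma/\rho=(\sqrt{D}-B)/C$, which follows from $\rho\sigma=AC\sqrt{D}$ (using $D-B^2=C^2$) together with the definitions of $\rho$ and $\sigma$. Thus $\sigma/\rho$ is a real quadratic irrational whose integer minimal polynomial divides $Cx^2+2Bx-C$ and whose Galois conjugate is $-(\sqrt{D}+B)/C$. Theorem~\ref{thm.Liou} then supplies an explicit Liouville constant $\mu(\sigma/\rho)\geq 1/(C+2\sqrt{D})\geq 1/(3\sqrt{D})$, where the inequality $C\leq\sqrt{D}$ is used at the final step.

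The argument then splits on whether $x_3 x_4=0$. If one of $x_3, x_4$ vanishes, the two differences reduce to nonzero integer multiples of $\rho$ and $\sigma$, so their product is at least $\rho\sigma=AC\sqrt{D}\geq A\sqrt{D}$, giving $M(\alpha)\geq A\sqrt{D}/16$. If both $x_3,x_4$ are nonzero and $x_3x_4>0$, then $|x_3\rho+x_4\sigma|\geq|x_3|\rho$ is automatic, while Liouville applied to the rational approximation $x_4/x_3\approx\sigma/\rho$ gives
\[
|x_3\sigma-x_4\rho|=|x_3|\rho\left|\tfrac{x_4}{x_3}-\tfrac{\sigma}{\rho}\right|\geq \frac{\rho}{3|x_3|\sqrt{D}}.
\]
When $x_3x_4<0$ the roles swap: $|x_3\sigma-x_4\rho|\geq|x_4|\rho$ is automatic, and Liouville, after absorbing the sign into $|x_4|$, bounds $|x_3\rho+x_4\sigma|\geq\rho/(3|x_4|\sqrt{D})$. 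In both subcases the extraneous $|x_3|$ or $|x_4|$ factors cancel, the product of the two differences is at least $\rho^2/(3\sqrt{D})\geq A\sqrt{D}/3$, and we conclude $M(\alpha)\geq A\sqrt{D}/48$. The principal obstacle is this sign bookkeeping: the Galois action interchanges $\rho$ and $\sigma$ up to sign, producing the cross-term $x_3\sigma-x_4\rho$ rather than a clean second coordinate, so one must carefully identify which of $x_3\rho\pm x_4\sigma$ can be made small and apply Liouville to that combination while using the elementary bound on the other.
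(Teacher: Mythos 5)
Your proof is correct and rests on the same key idea as the paper's: Liouville's theorem applied to the quadratic irrational $\sigma/\rho=(\sqrt{D}-B)/C$ with the constant $\mu\ge 1/(C+2\sqrt{D})\ge 1/(3\sqrt{D})$, producing the identical final bound $\tfrac{1}{48}A\sqrt{D}$. The only difference is organizational: you bound $M(\alpha)$ below by $\max(|\alpha_1|,|\alpha_3|)\cdot\max(|\alpha_2|,|\alpha_4|)$ via the two conjugate differences and split on the sign of $x_3x_4$, whereas the paper first normalizes a conjugate so that $\alpha_1\ge\tfrac14 x_3\rho$ and tracks $\lambda=\max(|\alpha_2|,|\alpha_4|)$ --- the two bookkeeping schemes are equivalent.
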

\begin{proof}
	Let $\rho$, $\sigma$ be as in \eqref{rho_sig}, and assume that $\alpha_1 \in \mathcal{O}_K$ generates $K$. Then there exist $x_1, x_2, x_3, x_4 \in \mathbb{Z}$ such that
	\[
	\alpha_1 = \tfrac{1}{4}\left(x_1 + x_2\sqrt{D} + x_3\rho + x_4\sigma\right),
	\]
	where at least one of $x_3$, $x_4$ is nonzero. Let $\alpha_2$, $\alpha_3$, $\alpha_4$ be the other conjugates of $\alpha_1$ as defined earlier. By selecting an appropriate conjugate of $\alpha_1$ or $-\alpha_1$, we may assume, without loss of generality, that $x_3 > 0$ and $x_4 \ge 0$. Moreover, we can impose the condition
	\begin{equation}\label{eq.alpbd}
		\alpha_1 \ge \tfrac{1}{4}x_3\rho,
	\end{equation}
	for the following reason. Suppose instead that $\alpha_1 < \frac{1}{4}x_3\rho$. Then
	\[
	\tfrac{1}{4}(-x_1 - x_2\sqrt{D} - x_4\sigma) > 0,
	\]
	from which it follows that
	\[
	-\alpha_3 = \tfrac{1}{4}(-x_1 - x_2\sqrt{D} + x_3\rho + x_4\sigma) > \tfrac{1}{4}(x_3\rho + 2x_4\sigma) \ge \tfrac{1}{4}x_3\rho.
	\]
	We may then work with $-\alpha_3$, which satisfies our assumptions and has the same Mahler measure as $\alpha_1$.
	
	Set $\lambda = \max\{|\alpha_2|, |\alpha_4|\}$. Then for $i = 2, 4$ we have $-\lambda \le \alpha_i \le \lambda$, so taking the difference yields
	\[
	|x_4\rho - x_3\sigma| \le 4\lambda.
	\]
	Dividing both sides by $x_3\rho$, we obtain
	\[
	\left| \frac{x_4}{x_3} - \frac{\sqrt{D} - B}{C} \right| \le \frac{4\lambda}{x_3\sqrt{A(D + B\sqrt{D})}}.
	\]
	
	We use Liouville's Theorem, Theorem~\ref{thm.Liou}, with  $\mu = \mu\left( \frac{\sqrt{D} - B}{C} \right)$. Because $\frac{\sqrt{D} - B}{C}$ is a root of the equation $Cx^2 + 2Bx - C = 0$, we derive
	\[
	\mu \ge \frac{1}{C + 2\sqrt{D}}.
	\]
	Theorem~\ref{thm.Liou} then gives
	\[
	x_3 \ge \frac{\mu}{4\lambda} \sqrt{A(D + B\sqrt{D})}.
	\]
	When combined with \eqref{eq.alpbd}, this establishes
	\[
	|\alpha_1| \ge \frac{\mu}{16\lambda} A(D + B\sqrt{D}) \ge \frac{A\sqrt{D}(B + \sqrt{D})}{16\lambda(C + 2\sqrt{D})}.
	\]
	Hence,
	\[
	M(\alpha_1) \ge |\alpha_1|\lambda \ge \frac{A\sqrt{D}(B + \sqrt{D})}{16(C + 2\sqrt{D})} \ge \tfrac{1}{48}A\sqrt{D},
	\]
	where the last inequality follows by taking $B = 0$ and $C = \sqrt{D}$. Since $\alpha_1$ was arbitrary, the desired bound follows.
\end{proof}

We now show that the exponent of $\tfrac16$ on the lower bound is sharp.
\begin{theorem}\label{theorem:realcyclic16}
	%	There are absolute constants $c_1,c_2>0$ such that there are infinitely many real cyclic quartic fields $K$ for which 
	%	\[ c_1 |D_K|^{\frac{1}{6}} \le M(\mathcal{O}_K) \le c_2 |D_K|^{\frac{1}{6}}.\]	
	There are infinitely many integers $k>0$ so that $k^2+1$ is square-free. For large enough such $k$, the fields $K_k=\Q\big(\sqrt{k^2+1+\sqrt{k^2+1}} \big)$ satisfy
	\[
	4^{-\frac{2}{3}} D_K^{\frac16}\leq M(\Ox_K) \leq 60 D_K^{\frac16}.
	\]
	
\end{theorem}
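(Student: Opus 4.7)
The density of positive integers $k$ with $k^2+1$ squarefree is positive (the sieve of Eratosthenes handles the degree-two polynomial $x^2+1$), so infinitely many such $k$ exist. For any such $k$ I take $(A,B,C,D)=(1,1,k,k^2+1)$; these satisfy the hypotheses of Section~\ref{section:introcyclic}, so $K_k$ is a totally real cyclic quartic field. Equation~\eqref{eq.disc_cyc} then gives $D_K=c(k^2+1)^3$ with $c=64$ when $k$ is even (so $D$ is odd and $B=1$ is odd) and $c=256$ when $k$ is odd (so $D$ is even); in particular $D_K^{1/6}\ge 2\sqrt{k^2+1}$. The lower bound $4^{-2/3}D_K^{1/6}\le M(\Ox_K)$ is exactly $(***)$, since $2^{-4/3}=4^{-2/3}$.

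\textbf{Constructing the unit generator.} For the upper bound I will exhibit an explicit unit. Write $\rho=\sqrt{D+\sqrt D}$, $\sigma=\sqrt{D-\sqrt D}$, and let $u=k+\sqrt D$ be the fundamental unit of $F:=\Q(\sqrt D)$, so $N_{F/\Q}(u)=-1$ and hence $N_{K/\Q}(u)=1$. I propose
\[\alpha\;=\;\rho+\sigma-u\;=\;\rho+\sigma-k-\sqrt D.\]
Proposition~\ref{prop.cycint} shows $\alpha\in\Ox_K$ in both parity cases: when $k$ is odd the basis is $\{1,\sqrt D,\rho,\sigma\}$ and $\alpha$ has coefficients $(-k,-1,1,1)\in\Z^4$; when $k$ is even, one writes $\alpha=\tfrac12(-2k-2\sqrt D+2\rho+2\sigma)$ and checks the congruences $x_1\equiv x_2$, $x_3\equiv x_4\equiv 0\pmod 2$. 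Since $(\rho+\sigma)^2=2D+2k\sqrt D\notin\Q$ we have $\rho+\sigma\notin F$, hence $\alpha\notin F$ and $\alpha$ generates $K_k$.

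\textbf{Computing $M(\alpha)$.} Two auxiliary identities are needed: $\sqrt D\,\rho=\rho+k\sigma$ and $\sqrt D\,\sigma=k\rho-\sigma$, verified by squaring and using $\rho\sigma=k\sqrt D$ together with $D=k^2+1$. Expanding $u(\rho-\sigma)$ with these yields the key identity $u(\rho-\sigma)=\rho+\sigma$, so $\alpha=u(\rho-\sigma-1)$. Using $\tau:\rho\mapsto\sigma,\ \sigma\mapsto-\rho,\ \sqrt D\mapsto-\sqrt D$ (whence $\tau(u)=-u^{-1}$), the four conjugates of $\alpha$ are
\[\alpha_1=(\rho+\sigma)-u,\ \alpha_2=-(\rho-\sigma)+u^{-1},\ \alpha_3=-(\rho+\sigma)-u,\ \alpha_4=(\rho-\sigma)+u^{-1}.\]
Because $D-k^2=1$, a direct computation gives the Pell-type identities $(\rho+\sigma)^2=u^2+1$ and $(\rho-\sigma)^2=u^{-2}+1$, so $\alpha_1\alpha_3=u^2-(\rho+\sigma)^2=-1$ and $\alpha_2\alpha_4=u^{-2}-(\rho-\sigma)^2=-1$; in particular $N(\alpha)=1$. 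The inequality $\rho-\sigma>1$ (which follows from $D-k^2=1$ by the same algebra) forces $|\alpha_1|<1<|\alpha_3|$ and $|\alpha_2|<1<|\alpha_4|$, so
\[M(\alpha)\;=\;|\alpha_3|\,|\alpha_4|\;=\;(\rho+\sigma+u)\bigl((\rho-\sigma)+u^{-1}\bigr)\;=\;2\rho+2\sqrt D+1,\]
the last equality using $\rho+\sigma=u(\rho-\sigma)$ once more to collapse the cross terms.

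\textbf{Bound and main obstacle.} Since $\rho=\sqrt{D+\sqrt D}\le\sqrt{2D}$, one obtains
\[M(\Ox_K)\;\le\;M(\alpha)\;\le\;(2+2\sqrt 2)\sqrt D+1\;\le\;6\sqrt D\;\le\;3\,D_K^{1/6}\;\le\;60\,D_K^{1/6}\]
for all $k$ large enough, completing the proof. The main obstacle is identifying the unit $\alpha$ in the first place: the construction hinges on the Pell-type coincidence $(\rho+\sigma)^2-u^2=1$, which is forced by the special shape $D=k^2+1$ and which makes the dominant conjugate of $\alpha$ grow only like $\sqrt D$, exactly the size needed for the exponent $\tfrac16$ to be attained.
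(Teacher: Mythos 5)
Your proposal is correct, and your upper bound argument takes a genuinely different route from the paper's. The paper takes the non-unit integral generator $\alpha_1=\lfloor\rho\rfloor+\sqrt D+\rho+\sigma$, shows the three non-dominant conjugates are bounded by absolute constants ($|\alpha_2|,|\alpha_3|\le 2$, $|\alpha_4|\le 3$) while $\alpha_1\le 5k$, and concludes $M(\alpha_1)\le 60k\le 60\,D_K^{1/6}$. You instead exploit the Pell-type identities $(\rho+\sigma)^2=u^2+1$ and $(\rho-\sigma)^2=u^{-2}+1$ (which I have verified, along with $u(\rho-\sigma)=\rho+\sigma$, $\alpha_1\alpha_3=\alpha_2\alpha_4=-1$, and the closed form $M(\alpha)=2\rho+2\sqrt D+1$) to produce an explicit \emph{unit} generator whose Mahler measure is computed exactly; this yields the sharper constant $M(\mathcal{O}_K)\le 3\,D_K^{1/6}$ and is arguably more illuminating about why the exponent $\tfrac16$ is attained, at the cost of being special to the shape $D=k^2+1$, whereas the paper's floor-function trick generalizes to its Propositions~\ref{realcyc_31012} and~\ref{realcyc_14310}. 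One small repair: your justification that $\rho+\sigma\notin F$ (``since $(\rho+\sigma)^2\notin\Q$'') only shows $\rho+\sigma\notin\Q$; the clean argument is that the order-two element of $\mathrm{Gal}(K/\Q)$ fixing $F$ sends $\rho\mapsto-\rho$, $\sigma\mapsto-\sigma$ and hence negates $\rho+\sigma\neq 0$ (equivalently, $\rho+\sigma\in F$ would force $\rho-\sigma=2\sqrt D/(\rho+\sigma)\in F$ and thus $\rho\in F$). Everything else, including the discriminant computation $D_K\in\{64,256\}\cdot D^3$, the integrality check in both parity cases via Proposition~\ref{prop.cycint}, and the identification $4^{-2/3}=2^{-4/3}$ with the constant in $(***)$, checks out.
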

\begin{proof} 
	Let 
	\[
	D(x) = x^2 + 1.
	\]
	There exist infinitely many positive integers $k$ for which $D(k)$ is square-free (see \cite{MR1512732}). For any such $k$, we set $D := D(k)$ and consider the field
	\[
	K := \mathbb{Q} \left( \sqrt{D + \sqrt{D}} \right).
	\]
	From \eqref{eq.disc_cyc}, the discriminant $D_K$ satisfies
	\[
	k^{6} \leq D_K \leq 257 k^{6}.
	\]
	The lower bound follows from $(***)$. To obtain an upper bound, consider
	\[
	\alpha_1 = \lfloor \rho \rfloor + \sqrt{D} + \rho + \sigma,
	\]
	where $\rho = \sqrt{D + \sqrt{D}}$ and $\sigma = \sqrt{D - \sqrt{D}}$. Let $\alpha_2, \alpha_3, \alpha_4$ be the other conjugates of $\alpha_1$ as given earlier. It is clear that
	\[
	\alpha_1 \leq 5k.
	\]
	Direct computations give that
	\[ |\sqrt{D} - \sigma| \leq 1,\quad |\sqrt{D} - \rho| \leq 1,\quad |\rho - \sigma| \leq 1. \]
	From these, we deduce
	\[
	|\alpha_2| \leq |\lfloor \rho \rfloor - \rho| + |\sqrt{D} - \sigma| \leq 2,
	\]
	\[
	|\alpha_3| \leq |\lfloor \rho \rfloor - \rho| + |\sqrt{D} - \sigma| \leq 2,
	\]
	\[
	|\alpha_4| \leq |\lfloor \rho \rfloor - \rho| + |\rho - \sqrt{D}| + |\rho - \sigma| \leq 3.
	\]
	Therefore
	\[
	M(\mathcal{O}_K) \leq M(\alpha_1) \leq 60k \leq 60 \:D_K^{\frac16}.
	\]
\end{proof}

We also show that the exponent of $\tfrac12$ in the upper bound is sharp.
\begin{theorem}\label{theorem:realcyclic12}
	There are infinitely many square-free integers $k$. For such an $k>0$ the fields $K_k=\Q\big(\sqrt{k(5 +\sqrt{5}} \big)$ satisfy
	\[
	\tfrac1{1920} \: D_K^{\frac12}\leq M(\Ox_K) \leq   D_K^{\frac12}.
	\]
	
\end{theorem}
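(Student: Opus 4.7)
The plan is to restrict to the infinite family of odd square-free positive integers $k$ with $\gcd(k,5)=1$, for which $K_k=\Q(\sqrt{k(5+\sqrt{5})})$ already appears in the normalized form of Section~\ref{section:introcyclic} with $A=k$, $B=1$, $C=2$, and $D=B^2+C^2=5$. First I would check the uniqueness conditions: $A=k$ is odd and square-free, $D=5$ is square-free, $B,C>0$, and $\gcd(A,D)=1$, all of which hold by our choice of $k$. Since $A>0$, the field $K_k$ is totally real, and the number of valid $k\le N$ has positive density in $N$, so there are infinitely many such $k$.

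Next I would compute the discriminant from equation~\eqref{eq.disc_cyc}. Because $D\equiv 1$ and $B\equiv 1 \pmod{2}$, the constant in~\eqref{eq.disc_cyc} is $c=64$, giving
\[ D_{K_k}=64\cdot A^2\cdot D^3=64\cdot k^2\cdot 125=8000\,k^2, \]
so $D_{K_k}^{1/2}=40\sqrt{5}\,k$. The lower bound then follows immediately from Proposition~\ref{Liouvillerealbound_cyc}:
\[ M(\Ox_{K_k})\ge \tfrac{1}{48}\,A\sqrt{D} = \tfrac{1}{48}\,k\sqrt{5} = \tfrac{1}{1920}\cdot 40\sqrt{5}\,k = \tfrac{1}{1920}\,D_{K_k}^{1/2}. \]
For the upper bound, since $K_k$ is totally real, the inequality $M(\Ox_{K_k})\le D_{K_k}^{1/2}$ is just the general totally real quartic bound $(***)$ recorded in Section~\ref{section:background}.

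There is essentially no obstacle: once the family is put in the canonical form so that Proposition~\ref{Liouvillerealbound_cyc} applies, the theorem reduces to matching constants, with the factor $\tfrac{1}{1920}$ arising as $\tfrac{1}{48\cdot 40}$ from $\sqrt{D_{K_k}}=40\sqrt{5}\,k$. If one wished to extend the statement to cover even $k$ or $k$ divisible by $5$, the only subtlety would be rewriting $K_k$ in the unique normal form of Section~\ref{section:introcyclic} (re-identifying the parameters $A,B,C,D$ and the resulting value of $c$ in~\eqref{eq.disc_cyc}), but the stated theorem already follows cleanly from the sub-family singled out above.
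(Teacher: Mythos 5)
Your proposal is correct and follows essentially the same route as the paper: identify the parameters $A=k$, $B=1$, $C=2$, $D=5$, compute $D_{K_k}^{1/2}=40\sqrt{5}\,k$ from~\eqref{eq.disc_cyc}, apply Proposition~\ref{Liouvillerealbound_cyc} for the lower bound $\tfrac{\sqrt{5}}{48}k=\tfrac{1}{1920}D_{K_k}^{1/2}$, and invoke $(***)$ for the upper bound. Your explicit restriction to odd square-free $k$ with $\gcd(k,5)=1$ is a small but sensible refinement ensuring the canonical form of Section~\ref{section:introcyclic} applies directly.
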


\begin{proof}
	The discriminant satisfies $D_{K_k}^{\frac12}=40\sqrt{5}k$. By Proposition~\ref{Liouvillerealbound_cyc},  $\frac{\sqrt{5}}{48}k\leq M(\Ox_K)$. By $(***)$ we have that $M(\Ox_K) \leq  D_{K_k}^{\frac12}$.

\end{proof}

The theoretical bounds for these real cyclic quartics have exponents in the range $[\tfrac16, \tfrac12]$, and in Theorem~\ref{theorem:realcyclic16} we have shown that the lower bound exponent of $\tfrac16$ is sharp.

We now prove Theorem~\ref{realcyc_main} in two parts. First in Proposition~\ref{realcyc_31012}, assuming the ABC conjecture,  we will obtain all rational exponents in the range $[\tfrac3{10},\tfrac12)$. In Proposition~\ref{realcyc_14310}, assuming the ABC conjecture,  we will obtain all rational exponents in the range $(\tfrac14,\tfrac3{10})$.

\begin{prop}\label{realcyc_31012}
	Let $\tfrac{3}{10} \le \tfrac{p}{q} < \tfrac12$ be a rational number. There are absolute  constants $c_1, c_2>0$ such that assuming the ABC conjecture there are infinitely many real cyclic quartic fields $K$ %for which 
	\[ c_1 \: D_K^{\frac{p}{q}} \le M(\mathcal{O}_K) \le c_2 \: D_K^{\frac{p}{q}}.\]
\end{prop}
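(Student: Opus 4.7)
The plan is to mirror the ABC-conditional arguments of Theorems~\ref{allexpreal} and \ref{thm:conditionalimaginary}. For $\tfrac{p}{q}\in[\tfrac{3}{10},\tfrac12)$, set $a=6p-q$ and $b=q-2p$; these are positive integers satisfying the identity $(a+b)/(2a+6b)=p/q$, and the assumption $\tfrac{p}{q}\ge\tfrac{3}{10}$ translates into $a\ge 2b$. The strategy is to produce a family of cyclic quartic fields with $A\sim k^a$, $B\sim k^b$, $C=1$, so that $D=B^2+1\sim k^{2b}$, $D_K\asymp A^2D^3\sim k^{2a+6b}$, and the bounds on $M(\mathcal{O}_K)$ are both of order $k^{a+b}$.

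Concretely, I would take $A(x)=x^a+2$ and $B(x)=x^b$, so $D(x)=x^{2b}+1$. The polynomial $A$ is Eisenstein at $2$, hence irreducible, and $A(x)D(x)$ has no repeated roots (the roots of $A$ have modulus $2^{1/a}\ne 1$, whereas those of $D$ lie on the unit circle). Applying Theorem~\ref{thm:Granville} under the ABC conjecture, there are infinitely many positive integers $k$ for which $A(k)D(k)$ is square-free, which in turn forces both $A(k)$ and $D(k)$ individually to be square-free and coprime (any common prime would contribute a square factor to the product). Further restricting $k$ to a suitable arithmetic progression makes $A(k)$ odd and enforces the congruence conditions of Section~\ref{section:introcyclic} so that the discriminant formula \eqref{eq.disc_cyc} applies to $K_k=\mathbb{Q}\bigl(\sqrt{A(k)(D(k)+B(k)\sqrt{D(k)})}\bigr)$, yielding $D_{K_k}\asymp k^{2a+6b}$. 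Proposition~\ref{Liouvillerealbound_cyc} then gives
\[
M(\mathcal{O}_{K_k})\;\ge\;\tfrac{1}{48}\,A(k)\sqrt{D(k)}\;\asymp\;k^{a+b}\;\asymp\;D_{K_k}^{p/q},
\]
which is the desired lower bound.

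For the upper bound, the natural candidate is $\rho_k=\sqrt{A(k)(D(k)+B(k)\sqrt{D(k)})}\in\mathcal{O}_{K_k}$: its minimal polynomial $x^4-2A(k)D(k)x^2+A(k)^2D(k)C^2$ is monic over $\mathbb{Z}$, and its four conjugates are $\pm\rho_k$ and $\pm\sigma_k$ with $\sigma_k=\sqrt{A(k)(D(k)-B(k)\sqrt{D(k)})}$. A direct computation yields $\rho_k\sigma_k=A(k)C\sqrt{D(k)}=A(k)\sqrt{D(k)}$, and both $\rho_k,\sigma_k>1$ for large $k$, so $M(\rho_k)=A(k)\sqrt{D(k)}\asymp k^{a+b}\asymp D_{K_k}^{p/q}$. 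This gives the matching upper bound $M(\mathcal{O}_{K_k})\le M(\rho_k)\le c_2D_{K_k}^{p/q}$.

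The main obstacle will be the bookkeeping needed to line up the arithmetic conditions: square-freeness (handled by Granville under ABC), coprimality of $A(k)$ and $D(k)$ (automatic from square-freeness of the product $A(k)D(k)$), and the congruences required for \eqref{eq.disc_cyc} to give the correct discriminant and for $\rho_k$ to lie in the expected form. As in the biquadratic proofs, this amounts to restricting $k$ to a suitable residue class, with minor ad hoc modifications for very small values of $a$ or $b$. The hypothesis $a\ge 2b$ coming from $\tfrac{p}{q}\ge\tfrac{3}{10}$ is precisely what makes this clean construction with $C=1$ succeed; the complementary range treated in Proposition~\ref{realcyc_14310} requires a different polynomial family in which $D$ grows faster than $A$.
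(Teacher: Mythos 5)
Your setup for the family and the lower bound is sound and essentially matches the paper: the parametrization $\tfrac{p}{q}=\tfrac{a+b}{2a+6b}$, Granville's theorem to get square-free coprime values of $A(k)$ and $D(k)$, the discriminant estimate $D_K\asymp k^{2a+6b}$, and Proposition~\ref{Liouvillerealbound_cyc} giving $M(\mathcal{O}_K)\gtrsim A\sqrt{D}\asymp k^{a+b}$ all go through. The fatal problem is the upper bound. The minimal polynomial of $\rho_k$ is $x^4-2ADx^2+A^2DC^2$, whose four roots are $\pm\rho_k$ and $\pm\sigma_k$; since $\sigma_k^2=A(D-B\sqrt{D})=AC^2\sqrt{D}/(\sqrt{D}+B)\asymp A\gg 1$, \emph{all four} roots lie outside the unit circle, so
\[
M(\rho_k)\;=\;\rho_k^2\,\sigma_k^2\;=\;A^2DC^2\;=\;A^2D\;\asymp\;k^{2a+2b}\;\asymp\;D_K^{2p/q},
\]
not $\rho_k\sigma_k=A\sqrt{D}$. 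You have taken the product of one representative from each pair $\pm\rho_k$, $\pm\sigma_k$ rather than over all conjugates of modulus at least $1$. Since $2p/q\ge 3/5>1/2$, the element $\rho_k$ is actually worse than the trivial bound $M(\mathcal{O}_K)\le D_K^{1/2}$ from $(***)$, so it cannot certify the exponent $p/q$.

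Closing this gap is where all the real work in the paper lies. One must exhibit a generator whose conjugates are unbalanced — the paper uses $\alpha_1=\lfloor\rho\rfloor+k^{s}\sqrt{D}+\rho+\sigma$, arranging two conjugates to be $O(1)$ and a third to be $O(k^{s})$, which forces the key estimate $|k^{s}\sqrt{D}-\sigma|=O(1)$. That estimate fails for generic choices: it requires $B$ to be a \emph{constant} polynomial (otherwise $|\rho-\sigma|$, and hence $|\alpha_4|$, is too large — see Case 1 of the Remark following Proposition~\ref{realcyc_14310}), and it requires the lower-order coefficients of $A(x)$ to be tuned so that $B^2A^2-A_0^2D$ has degree less than $3s$; this is precisely what drives the Catalan-number construction of $A(x)$ in the paper's proof. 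Your choice $A(x)=x^{a}+2$, $B(x)=x^{b}$, $C=1$ has the roles of $B$ and $C$ reversed relative to what this cancellation needs, and no simple generator of your fields will have Mahler measure $\asymp k^{a+b}$ without such tuning.
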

\begin{proof}
	Represent $\frac{p}{q}$ in the form $\frac{2s+t}{4s+6t}$, where $s,t \in \mathbb{N}_{\geq 1}$. For example, we can take $s = 6p-q$ and $t = 2q-4p$. Define the polynomials:
	\[
	B(x) = 2, \quad C(x) = x^t, \quad D(x) = B(x)^2 + C(x)^2 = 4 + x^{2t}.
	\]
	If $t$ is odd, then $D(x)$ is irreducible by \cite[Ch. VI, Theorem 9.1]{Lang}. If $t$ is even, then
	\[
	x^{2t} + 4 = (x^t - 2x^{\frac{t}{2}} + 2)(x^t + 2x^{\frac{t}{2}} + 2),
	\]
	with both factors Eisenstein and thus irreducible.

	Let $T(k)$ be the $k$-th Catalan number, i.e., $T(k) = \frac{1}{k+1}\binom{2k}{k} \in \mathbb{Z}$.
	Construct the polynomial:
	\[
	A(x) = \sum_{i=0}^{2s} a(i) x^{i},
	\]
	where the coefficients $a(i)$ are given by:
	\[
	a(i) =
	\begin{cases}
		1 & \text{if } i = 2s,\\
		2 & \text{if } i = 2s - t \text{ or } i = 0,\\
		4 & \text{if } i = 2s - 2t \text{ and } i \ne 0,\\
		(-1)^{(j+1)/2} 4 T\left(\frac{j - 3}{2}\right) & \text{if } i = 2s - jt \text{ for some odd } j \ge 3,\\
		0 & \text{otherwise},
	\end{cases}
	\]
	so that 
	\[A(x) = x^{2s} + 2x^{2s-t} + 4x^{2s-2t} + 4x^{2s-3t} - 4x^{2s-5t} + 8x^{2s-7t} + \cdots+2.\]
	See Remark \ref{rk.Ax} for another description of $A(x)$. The polynomial $A(x)$ is Eisenstein and hence irreducible. 
	It follows from the factorization of $D(x)$ that $A(x)$ and $D(x)$ have no common roots. Observe that 
	\[
	A(0)D(0) = 8, \quad A(1)D(1)\text{ is odd},
	\]
	so there is no prime that divides $A(k)D(k) $ for all integers $k$. By  Granville's work, stated as Theorem~\ref{thm:Granville} above, 
	assuming the ABC conjecture, there are infinitely many positive integers $k$ such that $A(k)$ and $D(k)$ are square-free and coprime. 
	Moreover, $A(k)$ must be odd, since otherwise $k$ would be even, implying $4 \mid D(k)$ and contradicting the square-freeness of $D(k)$. Thus, $A(k), B(k), C(k)$, and $D(k)$ satisfy the conditions in Section~\ref{section:introcyclic}.
	
	Let $k$ be such an integer and define 
	\[
	K := \mathbb{Q}\bigl(\sqrt{A\,(D+2\sqrt{D})}\bigr),
	\]
	where $A = A(k)$ and $D = D(k)$. By Section~\ref{section:introcyclic}, we have
	\[D_K = c\,A^2 D^3,
	\quad \text{where } c \in \{1, 16, 64, 256\}.\]
	Hence,
	\[A^2 D^3 \;\le\; D_K \;\le\; 256 A^2 D^3.\]
	The leading term of $A(x)$ is $x^{2s}$. Thus, for any fixed $\epsilon > 0$, 
	\[(1-\epsilon)\,k^{2s} \;\le\; A \;\le\; (1+\epsilon)\,k^{2s}.\]
	Here and for the remainder of the proof, any inequality involving $k$ is assumed to hold for sufficiently large $k$. Similarly, since the leading term of $D(x)$ is $x^{2t}$, for any fixed $\delta > 0$,
	\[(1-\delta)\,k^{2t} \;\le\; D \;\le\; (1+\delta)\,k^{2t}.\]
	By choosing $\epsilon$ and $\delta$ sufficiently small, it follows that
	\[\tfrac{1}{2}\,k^{4s+6t} \;\le\; D_K \;\le\; 257\,k^{4s+6t}.\]

	Applying Proposition~\ref{Liouvillerealbound_cyc}, we obtain the lower bound
	\[\tfrac{1}{12593} D_K^{\frac{p}q} \leq \tfrac{1}{49}k^{2s+t} \leq M(\mathcal{O}_K).\]
	To derive an upper bound, consider \[\alpha_1 = \lfloor\rho\rfloor + k^s \sqrt{D} + \rho + \sigma,\]
	where 
	$\rho = \sqrt{A(D+2\sqrt{D})}$ and $\sigma = \sqrt{A(D-2\sqrt{D})}$. Let $\alpha_2, \alpha_3, \alpha_4$ be the other conjugates of $\alpha_1$ as given earlier.
	Then 
	\[\alpha_1 \leq 5k^{s+t}.\]

	We now show that $|k^s\sqrt{D} - \sigma| \leq 9.$
	Let $A_0(x) = A(x) - x^{2s}$ and $A_0 = A_0(k)$. 
	Then
	\begin{align*}
		\left|k^s \sqrt{D} - \sigma \right|
		= \left| \frac{k^{2s} D - A(D - 2\sqrt{D})}{k^s \sqrt{D} + \sigma} \right| 
		\le \left| \frac{k^{2s} D - A(D - 2\sqrt{D})}{k^{s+t}} \right|,
	\end{align*}
	where the inequality follows from $k^{s+t} \le k^s \sqrt{D} + \sigma$.  
	Factoring out $\sqrt{D}$ from the numerator in the last fraction and using $\sqrt{D} \le 2k^t$, we obtain
	\begin{align*}
		\left|k^s \sqrt{D} - \sigma \right|
		\le 2 \left| \frac{2A - (A - k^{2s})\sqrt{D}}{k^s} \right| 
		= 2 \left| \frac{4A^2 - A_0^2 D}{(2A + A_0 \sqrt{D}) k^s} \right|.
	\end{align*}
	Since $ 2k^{2s}\le 2A + A_0 \sqrt{D}$, it follows that
	\begin{align*}
		\left|k^s \sqrt{D} - \sigma \right|
		\le \left| \frac{4A^2 - A_0^2 D}{k^{3s}} \right| 
		= \left| \frac{4(k^{4s} + 2k^{2s} A_0) - A_0^2 k^{2t}}{k^{3s}} \right|=\left| \frac{f(k)}{k^{3s}}\right|
	\end{align*}
	where
	\[f(x) := \sum_{i=0}^{4s} f(i)x^i := 4x^{4s} + 8x^{2s}A_0(x) - A_0(x)^2x^{2t}.\]
	Consider the case when $\frac{p}{q} > \frac{3}{10}$, so that $s > t$. 
	In this case, the degree of $f(x)$ is less than $3s$. Suppose the degree is at least $2s$ and let $i \geq 3s$, which implies $i > 2s + t$. It is easy to see that $f(i) = 0$ if $i$ is not of the form $4s - jt$, or if $i = 4s - jt$ with $j = 0, 2$, or $j$ odd. When $j \geq 4$ and $j$ is even, we obtain
	\[
	f(4s - jt)
	= 16(-1)^{j/2} \left( -T\!\left(\tfrac{j-2}{2}\right) 
	+ \sum_{i=0}^{(j-4)/2} T(i)\,T\!\left(\tfrac{j-4}{2}-i\right) \right) 
	= 0,
	\]
	where the second equality follows from Segner's recurrence relation 
	(see, e.g., \cite[Eq.~(5.6)]{Koshy}). Since $f(i) = 0$ for $i \geq 3s$, the degree of $f(x)$ is less than $3s$.  As such, when $\tfrac{p}q> \tfrac3{10}$ the above calculation shows for sufficiently large $k$ 
	that $\left|k^s \sqrt{D} - \sigma \right| \leq 9$. 
	If $\frac{p}{q} = \frac{3}{10}$, setting $s = t = 1$, we find that $f(x) = 8x^3 + 12x^2$ and 
	we have
	\[
	\left| k^s \sqrt{D} - \sigma \right| 
	\leq \left| \frac{f(k)}{k^3} \right| = \left| \frac{8k^3 + 12k^2}{k^3} \right|.
	\]
	Hence,	$\left| k^s \sqrt{D} - \sigma \right| \leq 9$ when $\frac{p}{q} = \frac{3}{10}$ as well.
	
	We now bound the conjugates, recalling that from above, $|\alpha_1| \leq 5k^{s+t}$.
	Meanwhile,
	\begin{align*}
		|\rho - \sigma| &= \left| \sqrt{A(D + B\sqrt{D})} - \sqrt{A(D - B\sqrt{D})} \right| \le \frac{2AB\sqrt{D}}{k^{s + t}} \le 5 k^s.
	\end{align*}
	It follows that
	\[
	|\alpha_2| \le |\lfloor \rho \rfloor - \rho| + |k^s \sqrt{D} - \sigma| \le 10,
	\]
	\[
	|\alpha_3| \le |\lfloor \rho \rfloor - \rho| + |k^s \sqrt{D} - \sigma| \le 10,
	\]
	\[
	|\alpha_4| \le |\lfloor \rho \rfloor - \rho| + |k^s \sqrt{D} - \sigma| + 2|\rho - \sigma| \le 11 k^s.
	\]
	Therefore
	\[
	M(\mathcal{O}_K) \le M(\alpha_1) \le 5500 k^{2s + t} \le 11000 \:D_K^{\frac{p}q}.
	\]
	This completes the proof.
\end{proof}
\begin{remark}\label{rk.Ax}
	When $i \ne 0$ and is of the form $i = 2s - jt$ for some integer $j > 0$, the coefficient $a(i)$ of the polynomial $A(x)$ above can be uniformly expressed via the hypergeometric function:
	\[
	a(i) = 4 \cdot {}_2F_1(3-j, 2-j; 2; -1).
	\]
	For further details, see \cite[A198786]{oeis:A198786}. 
\end{remark}

\begin{prop}\label{realcyc_14310}
	Let $\tfrac{1}{4} \le \tfrac{p}{q} < \tfrac{3}{10}$ be a rational number. There are absolute  constants $c_1, c_2>0$ such that assuming the ABC conjecture there are infinitely many real cyclic quartic fields $K$ for which 
	\[ c_1 D_K^{\frac{p}{q}} \le M(\mathcal{O}_K) \le c_2 D_K^{\frac{p}{q}}.\]
\end{prop}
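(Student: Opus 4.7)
The plan is to adapt the argument of Proposition~\ref{realcyc_31012} to the regime $s<t$.  I would write $\tfrac{p}{q}=\tfrac{2s+t}{4s+6t}$ with positive integers $s,t$ (for instance $s=6p-q$ and $t=2q-4p$); the range $\tfrac{p}{q}\in[\tfrac{1}{4},\tfrac{3}{10})$ translates to $t\le 2s<2t$, in particular $s<t$.  Keeping the auxiliary polynomials $B(x)=2$, $C(x)=x^{t}$, $D(x)=x^{2t}+4$ from Proposition~\ref{realcyc_31012}, I would choose a degree-$2s$ polynomial $A(x)$ whose top two terms are $x^{2s}+2x^{2s-t}$, and apply Granville's Theorem~\ref{thm:Granville} under the ABC conjecture to $A(x)D(x)$ to extract infinitely many odd integers $k$ for which $A(k)$ and $D(k)$ are squarefree and coprime.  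For such $k$, set $K=\mathbb{Q}(\sqrt{A(D+2\sqrt{D})})$; this is a real cyclic quartic with $D_K=cA^2D^3$ for some $c\in\{1,16,64,256\}$, so $D_K$ has size $k^{4s+6t}$ up to a multiplicative constant, and Proposition~\ref{Liouvillerealbound_cyc} gives $M(\mathcal{O}_K)\ge \tfrac{1}{48}A\sqrt{D}\ge c_1\,D_K^{p/q}$, yielding the lower bound.

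For the upper bound I would try the same candidate $\alpha_1=\lfloor\rho\rfloor+k^{s}\sqrt{D}+\rho+\sigma$ as in Proposition~\ref{realcyc_31012}, with $\rho=\sqrt{A(D+2\sqrt{D})}$ and $\sigma=\sqrt{A(D-2\sqrt{D})}$.  The estimates $|\lfloor\rho\rfloor-\rho|\le 1$, $|\alpha_1|=O(k^{s+t})$, and $|\rho-\sigma|=2AB\sqrt{D}/(\rho+\sigma)=O(k^{s})$ (hence $|\alpha_4|=O(k^{s})$) all carry over unchanged from the previous proof.  What remains is to control $|k^{s}\sqrt{D}-\sigma|$; writing
\[
k^{s}\sqrt{D}-\sigma \;=\; \frac{-A_0 D+2A\sqrt{D}}{k^{s}\sqrt{D}+\sigma}, \qquad A_0 := A-x^{2s},
\]
and observing that the denominator has order $k^{s+t}$, it suffices to show that the numerator has polynomial size $O(k^{s+t})$ in $k$.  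Granted this, $|\alpha_2|,|\alpha_3|=O(1)$ and we obtain $M(\alpha_1)=O(k^{s+t})\cdot O(1)\cdot O(1)\cdot O(k^{s})=O(k^{2s+t})=O(D_K^{p/q})$, completing the upper bound.

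The main obstacle is the construction of $A(x)$.  The leading-order matching $A_0\approx 2A/\sqrt{D}$ forces the top term of $A_0$ to be $2x^{2s-t}$; with the minimal choice $A(x)=x^{2s}+2x^{2s-t}$ the numerator $-A_0D+2A\sqrt{D}$ is $4x^{2s}+O(x^{2s-t})$, of degree $2s<s+t$, and one would obtain $|k^{s}\sqrt{D}-\sigma|=O(k^{s-t})\to 0$ as desired.  However $x^{2s}+2x^{2s-t}=x^{2s-t}(x^t+2)$ has $x^{2s-t}$ as a repeated factor when $2s-t\ge 2$, so $A(k)$ fails to be squarefree for $k>1$.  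The Eisenstein-type fix $A(x)=x^{2s}+2x^{2s-t}+2$ is irreducible (hence squarefree), but its constant term contributes $2x^{2t}$ to $A_0\cdot D$, so the numerator above acquires a term of size $k^{2t}>k^{s+t}$ and the bound on $|k^{s}\sqrt{D}-\sigma|$ is destroyed.  The key technical task is therefore to find a squarefree degree-$2s$ polynomial $A(x)$ agreeing with $x^{2s}+2x^{2s-t}$ in its top two coefficients, satisfying Granville's hypotheses for $A(x)D(x)$, and whose lower-order corrections contribute to $-A_0D+2A\sqrt{D}$ only through cancellations analogous to (but distinct from) the Segner/Catalan-type recurrence employed in Proposition~\ref{realcyc_31012}.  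I expect this to be achievable by a case split on $2s-t$ together with an explicit family of low-degree coefficient adjustments, with special treatment for the boundary cases $2s-t\in\{0,1\}$.
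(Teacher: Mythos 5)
Your reduction of the upper bound to the estimate $\lvert k^{s}\sqrt{D}-\sigma\rvert=O(1)$, and your diagnosis of the obstruction, are both correct (the lower-bound half of your argument is fine and matches the paper). But the fix you anticipate --- keeping $D(x)=x^{2t}+4$ and adjusting only the lower-order coefficients of $A(x)$ --- cannot work, and this is exactly where the paper's proof departs from Proposition~\ref{realcyc_31012}. Write $A(x)=x^{2s}+A_0(x)$; rationalizing twice as in the paper gives $\lvert k^{s}\sqrt{D}-\sigma\rvert\asymp \lvert f(k)\rvert/k^{3s}$ with $f=4x^{4s}+8x^{2s}A_0-A_0^{2}x^{2t}$, so one needs $\deg f\le 3s$. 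Cancelling the $x^{4s}$ terms forces $\deg A_0=2s-t$ with leading coefficient $\pm2$, so $A_0=\pm2x^{2s-t}+g$ with $e:=\deg g<2s-t$, and then
\[
f=\pm16\,x^{4s-t}+8x^{2s}g\mp4x^{2s+t}g-g^{2}x^{2t}.
\]
In the regime $s<t\le 2s$ the four summands have top degrees $4s-t$, $2s+e$, $2s+t+e$, $2t+2e$, and the third strictly dominates the others (since $e<2s-t$ gives $2s+t+e>2t+2e$, and $e\ge0>2s-2t$ gives $2s+t+e>4s-t$). Hence if $g\neq0$ then $\deg f=2s+t+e>3s$ with non-cancelling leading coefficient, so $\lvert k^{s}\sqrt{D}-\sigma\rvert\to\infty$ and the bounds on $\lvert\alpha_2\rvert,\lvert\alpha_3\rvert$ fail. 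You are therefore forced into $g=0$, i.e.\ $A(x)=x^{2s-t}(x^{t}\pm2)$, and $A(k)$ is divisible by $k^{2s-t}$, hence never squarefree once $2s-t\ge2$ --- which is unavoidable for all but a thin set of exponents (e.g.\ $p/q=7/25$ gives $s=17$, $t=22$, $2s-t=12$, and no rescaling of $(s,t)$ helps). There is no ``explicit family of low-degree coefficient adjustments'' to $A$ alone; the missing idea is not a refinement of $A$ but a redesign of $D$.

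The paper escapes by abandoning $C(x)=x^{t}$: it chooses $m$ according to where $s/t$ sits in $(\tfrac12,1)$, picks a prime $r\equiv5\pmod 8$ with $r=r_1^{2}+r_2^{2}$ and $r>100m^{2}$, and sets $B(x)=r_2$, $A(x)=x^{2s}+r_2x^{2s-t}+r_2$, and $C(x)=r_1+\sum_{i=0}^{2m-1}(-1)^{i}x^{\,t-i(2s-t)}$, a lacunary alternating sum whose gaps equal $2s-t$. The analogue of $f$ is $r_2^{2}(x^{4s}+2x^{2s}A_0)-A_0^{2}C^{2}$, and the high-degree terms now cancel through the cross terms of $A_0^{2}C^{2}$ (the role played by the Catalan/Segner identity in Proposition~\ref{realcyc_31012} is here played by the telescoping of the alternating sum); the large prime constant term $r$ of $D(x)=r_2^{2}+C(x)^{2}$ forces irreducibility via a dominant-coefficient criterion, and Granville's theorem is then applied to $A(x)D(x)$ as you intended. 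So your overall architecture (same $\alpha_1$, same Liouville lower bound, same use of Granville) is right, but the construction of the auxiliary polynomials --- the actual content of the proposition --- is not supplied by your proposal and cannot be obtained within the constraints you impose.
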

\begin{proof}
	We can write $\frac{p}{q} = \frac{2s + t}{4s + 6t}$ with $s, t \in \mathbb{N}_{\geq 1}$. For example, we may take $s = 6p - q$ and $t = 2q - 4p$.
	
	First, assume that $\frac{1}{4} < \frac{p}{q} < \frac{3}{10}$, so that $s < t < 2s$. Then there exists a positive integer $m$ such that
	\[
	\frac{8(m+1)-2}{32(m+1)-12} \le \frac{p}{q} < \frac{8m - 2}{32m - 12},
	\]
	which is equivalent to
	\[
	\frac{2(m+1)}{4(m+1)-2} \le \frac{s}{t} < \frac{2m}{4m - 2}.
	\]
	
	Let $r \equiv 5 \pmod{8}$ be a prime such that $r > 100m^2$. Such a prime can be expressed as $r = r_1^2 + r_2^2$, where $r_1, r_2 \in \mathbb{N}$, with $r_1$ odd and $r_2 \equiv 2 \pmod{4}$. Define the polynomials
	\[
	A(x) = x^{2s} + r_2 x^{2s - t} + r_2, \ \ 
	B(x) = r_2, \ \ 
	C(x) = r_1 + \sum_{i = 0}^{2m - 1} (-1)^i x^{t - i(2s - t)}, 
	\]
and let 	
	\[
	 D(x) = B(x)^2 + C(x)^2,
	\]
	where the inequality $\frac{s}{t} < \frac{2m}{4m - 2}$ ensures that each exponent of $x$ in the summation in $C(x)$ is positive. The polynomial $A(x)$ is Eisenstein and thus irreducible.
	We will now show that $D(x)$ is also irreducible. A direct computation shows
	\begin{align*}
		C(x)^2 ={}& \sum_{i = 0}^{2m - 1} (-1)^i(i + 1)x^{2t - i(2s - t)} + \sum_{i = 2m}^{4m - 2} (-1)^i(4m - 1 - i)x^{2t - i(2s - t)} \\
		&+ 2r_1 \sum_{i = 0}^{2m - 1} (-1)^i x^{t - i(2s - t)} + r_1^2.
	\end{align*}
	The sum of absolute values of the coefficients corresponding to positive-degree terms in $D(x)$ is at most
	\[
	F := \sum_{i = 0}^{2m - 1}(i + 1) + \sum_{i = 2m}^{4m - 2}(4m - 1 - i) + 4r_1 m = 4m(m + r_1),
	\]
	while the constant term is the prime $r$. If $r_1 > r_2$, then $r_1 > \frac{1}{2} \sqrt{r} > 5m$, and
	\[
	r - F > (r_1 - 2m)^2 - 8m^2 > 0.
	\]
	Likewise, if $r_2 > r_1$, then $r_2 > \frac{1}{2} \sqrt{r} > 5m$, and
	\[
	r - F > (r_2 - 2m)^2 - 8m^2 > 0.
	\]
	Hence, by \cite[Theorem~2.2.7]{Prasolov}, $D(x)$ is irreducible. Consequently, $A(x)D(x)$ has no repeated roots. Observe that the greatest common divisor of 
	\[
	A(0) D(0) = r_2 r \quad \text{and} \quad A(1) D(1) = (2r_2 + 1) r
	\]
	is the prime $r$. By Theorem~\ref{thm:Granville}, assuming the ABC conjecture, there exist infinitely many positive integers $k$ such that $A(k)$ and $D(k)$ are square-free and coprime.
	
	For such $k$, define $A = A(k)$, $B = B(k)$, $C = C(k)$, $D = D(k)$, and set
	\[
	K := \mathbb{Q} \left( \sqrt{A(D + B\sqrt{D})} \right).
	\]
	If $A(k)$ is odd, then $A, B, C, D$ satisfy the conditions in Section~\ref{section:introcyclic}. If $A(k)$ is even, then by \cite[Eq.~2.16]{zbMATH03995823},
	\[
	K = \mathbb{Q} \left( \sqrt{\frac{A}{2}(D + C\sqrt{D})} \right),
	\]
	and $\frac{A}{2}, C, B, D$ satisfy the conditions in Section~\ref{section:introcyclic}. In either case, by~\eqref{eq.disc_cyc}, the discriminant satisfies
	\[
	\tfrac{1}{2} k^{4s + 6t} \le D_K \le 257 k^{4s + 6t}.
	\]
	Here and for the remainder of the proof, any inequality involving $k$ is assumed to hold for sufficiently large $k$.
	Applying Proposition~\ref{Liouvillerealbound_cyc}, we derive the lower bound
	\[
	\tfrac{1}{24929} D_K^{\frac{p}q} \le \tfrac{1}{97} k^{2s + t} \le M(\mathcal{O}_K).
	\]
	
	Next we establish an upper bound. Consider
	\[
	\alpha_1 = \lfloor \rho \rfloor + k^s \sqrt{D} + \rho + \sigma,
	\]
	where 
	\[
	\rho = \sqrt{A(D + B\sqrt{D})},\quad \sigma = \sqrt{A(D - B\sqrt{D})}.
	\]
	Clearly, $\alpha_1 \in \mathcal{O}_K$ and $K = \mathbb{Q}(\alpha_1)$. Let $\alpha_2, \alpha_3, \alpha_4$ denote the other conjugates of $\alpha_1$ as given earlier. We find
	\[
	|\alpha_1| \le 5 k^{s + t}.
	\]
	We now show that $|k^s\sqrt{D} - \sigma| \leq 9$. Define $A_0(x) := A(x) - x^{2s}$, and set $A_0 := A_0(k)$. We have
	\begin{align*}
		\left| k^s \sqrt{D} - \sigma \right|
		= \left| \frac{k^{2s}D - A(D - r_2 \sqrt{D})}{k^s \sqrt{D} + \sigma} \right| 
		\leq \left| \frac{k^{2s}D - A(D - r_2 \sqrt{D})}{k^{s+t}} \right|,
	\end{align*}
	where the inequality follows from $k^{s+t} \leq k^s \sqrt{D} + \sigma$.  
	Factoring out $\sqrt{D}$ from the numerator in the last fraction and using $\sqrt{D} \leq 2k^t$, we obtain
	\begin{align*}
		\left| k^s \sqrt{D} - \sigma \right|
		\leq 2 \left| \frac{r_2 A - (A - k^{2s})\sqrt{D}}{k^s} \right| 
		= 2 \left| \frac{r_2^2 A^2 - A_0^2 D}{(r_2 A + A_0 \sqrt{D}) \, k^s} \right|.
	\end{align*}
	Since $r_2 A + A_0 \sqrt{D} \geq 2k^{2s}$, it follows that
	\begin{align*}
		\left| k^s \sqrt{D} - \sigma \right|
		\leq \left| \frac{r_2^2 A^2 - A_0^2 D}{k^{3s}} \right| 
		= \left| \frac{r_2^2 (k^{4s} + 2k^{2s}A_0) - A_0^2 C^2}{k^{3s}} \right| =\left| \frac{f(k)}{k^{3s}}\right|
	\end{align*}
	where $f(x) := r_2^2 (x^{4s} + 2 x^{2s} A_0(x)) - A_0(x)^2 C(x)^2$. Expanding $f(x)$ yields
	\begin{align*}
		f(x) 
		={}&\, 2r_2^2 x^{2t - (2m - 2)(2s - t)} + 2r_1 r_2^2 x^{t - (2m - 2)(2s - t)}- r_2^2 x^{2t - (4m - 2)(2s - t)}\\
		&  + 2r_1 r_2^2 x^{t - (2m - 1)(2s - t)} - 2r_1 r_2^2 x^{4s - t} + 2r_2^3 x^{4s - t} - 2r_1 r_2^2 x^{2s} \\
		&+ 2r_2^3 x^{2s} - r_1^2 r_2^2 x^{4s - 2t} - 2r_1^2 r_2^2 x^{2s - t} - r_1^2 r_2^2.
	\end{align*}
	If $\tfrac{s}{t} \ne \tfrac{2}{3}$, we claim that the degree of $f(x)$ is less than $3s$. First, observe that the largest exponent among the first four terms in the formula for $f(x)$ above is \[2t - (2m - 2)(2s - t).\] The conditions \[\frac{2(m+1)}{4(m+1)-2} \le \frac{s}{t} \quad \text{and} \quad \frac{s}{t} \ne \frac{2}{3}\] together imply that \[2t - (2m - 2)(2s - t) < 3s.\] Hence the exponent of each of the first four terms is less than $3s$. Moreover, as $t > s$, it follows that the exponent of every remaining term is also less than $3s$. This proves the claim. Therefore, for $k$ sufficiently large, when$\tfrac{s}{t} \ne \tfrac{2}{3}$ we have  $| k^s \sqrt{D} - \sigma | \le 9.$

	When $\tfrac{s}{t} = \tfrac{2}{3}$, we have $m = 1$, and the leading term of $f(x)$ is $2r_2^2 x^{3s}$. Choosing $r = 173$ yields $r_1 = 13$ and $r_2 = 2$, so the leading term becomes $8x^{3s}$. Because  $| k^s \sqrt{D} - \sigma | \le | \frac{f(k)}{k^{3s}} |,$ we obtain $| k^s \sqrt{D} - \sigma| \le 9.$

	Additionally, 	
	\[
	|\rho - \sigma| = \left| \frac{2AB\sqrt{D}}{\sqrt{A(D + B\sqrt{D})} + \sqrt{A(D - B\sqrt{D})}} \right| 
	\le \frac{2AB\sqrt{D}}{k^{s+t}} 
	= 2r_2 \frac{A\sqrt{D}}{k^{s+t}}.
	\]
	It follows that
	\[
	|\alpha_2| \le |\lfloor \rho \rfloor - \rho| + |k^s \sqrt{D} - \sigma| \le 10,
	\]
	\[
	|\alpha_3| \le |\lfloor \rho \rfloor - \rho| + |k^s \sqrt{D} - \sigma| \le 10,
	\]
	\[
	|\alpha_4| \le |\lfloor \rho \rfloor - \rho| + |k^s \sqrt{D} - \sigma| + 2|\rho - \sigma| \le 4\sqrt{r} k^s.
	\]
	Therefore,
	\[
	M(\mathcal{O}_K) \le M(\alpha_1) \le 2000\sqrt{r} k^{2s + t} \le 4000\sqrt{r} D_K^{\frac{p}q}.
	\]
	
	Finally, if $\frac{p}{q} = \frac{1}{4}$, then $2s = t$, and we define
	\[
		A(x) = x^{2s} + 2, \ \  B(x) = 2, \ \ 
		C(x) = x^{t},  \ \ D(x) = B(x)^2 + C(x)^2.
	\]
	The analysis in this case proceeds analogously to the proof of Proposition~\ref{realcyc_31012}, and we omit further details.
\end{proof}

\begin{remark}
	It appears that the approach used in the proofs of Propositions~\ref{realcyc_31012} and~\ref{realcyc_14310} does not yield families of real cyclic quartic fields that give exponents lying in the interval $\left( \frac{1}{6}, \frac{1}{4} \right)$. We now explain the difficulties.
	
	Assume that $\frac{1}{6} < \frac{p}{q} < \frac{1}{4}$. We may write
	\[
	\frac{p}{q} = \frac{2s + t}{4s + 6t}
	\quad \text{with} \quad s, t \in \mathbb{N}_{\geq 1}.
	\]
	Let $A(x), B(x), C(x), D(x) \in \mathbb{Z}[x]$ be such that $\deg A(x) = 2s$, $\deg D(x) = 2t$, and
	\[
	B(x)^2 + C(x)^2 = D(x).
	\]
	Suppose there exist infinitely many positive integers $k$ such that $A(k), B(k), C(k), D(k)$ satisfy the conditions in Section~\ref{section:introcyclic}, with $A(k) > 0$. For such $k$, consider the associated family of real cyclic quartic fields
	\[
	K := \mathbb{Q} \left( \sqrt{A(D + B\sqrt{D})} \right),
	\]
	where $A = A(k)$, $B = B(k)$, $C = C(k)$, and $D = D(k)$. Let $\rho$ and $\sigma$ be defined as in~\eqref{rho_sig}.
	
	Following the approach in Propositions~\ref{realcyc_31012} and~\ref{realcyc_14310}, we consider the element
	\[
	\alpha_1 = \lfloor \rho \rfloor + k^s \sqrt{D} + \rho + \sigma.
	\]
	We aim to obtain that
	\[
	\left| k^s \sqrt{D} - \sigma \right| \le c_1
	\]
	for some constant $c_1 > 0$, and use this to deduce the bound
	\begin{equation}\label{eq.upbd}
		M(\alpha_1) \le c_2 k^{2s + t}
	\end{equation}
	for some constant $c_2 > 0$.
	
	\medskip
	
	\textbf{Case 1: $\deg B(x) = r > 0$.} In this case, we estimate
	\[
	|\alpha_1| \ge \tfrac{1}{2} k^{s + t},
	\]
	and
	\[
	|\rho - \sigma| = 2\sqrt{A} B \cdot \frac{\sqrt{D}}{\sqrt{D + B\sqrt{D}} + \sqrt{D - B\sqrt{D}}}
	\ge \tfrac{1}{2} k^{s + r}.
	\]
	Assume that $\left| k^s \sqrt{D} - \sigma \right| \le c_1$ for some constant $c_1$. Let $\alpha_4$ be the conjugate of $\alpha_1$ as given earlier. Then we derive
	\begin{align*}
		|\alpha_4| &= \left| 2(\rho - \sigma) + (\sigma - k^s \sqrt{D}) - (\rho - \lfloor \rho \rfloor) \right| \\
		&\ge 2|\rho - \sigma| - c_1 - 1 \ge \tfrac{1}{2} k^{s + r}.
	\end{align*}
	Hence,
	\[
	M(\alpha_1) \ge \tfrac{1}{4} k^{2s + t + r},
	\]
	which contradicts the upper bound in~\eqref{eq.upbd}.
	
	\medskip
	
	\textbf{Case 2: $B(x)$ is constant.} Define $A_0(x) := A(x) - x^{2s}$, and set $A_0 := A_0(n)$. Applying computations analogous to those in the proofs of Theorem~\ref{realcyc_14310}, we find
	\[
	\left| k^s \sqrt{D} - \sigma \right| \le 2 \left| \frac{B^2(k^{4s} + 2k^{2s} A_0) - A_0^2 k^{2t}}{k^{3s}} \right|.
	\]
	Since $\frac{1}{6} < \frac{p}{q} < \frac{1}{4}$, we have $s > 0$ and $2s < t$. Then the degree of the numerator polynomial
	\[
	B(x)^2 (x^{4s} + 2x^{2s} A_0(x)) - A_0(x)^2 x^{2t}
	\]
	is $2t + \deg A_0(x) > 4s$, so the right-hand side does not remain bounded as $k \to \infty$. Therefore, no constant bound is attained on $\big| k^s \sqrt{D} - \sigma \big|$.

	In both cases, this approach does not work to construct the desired family of real cyclic quartic fields with exponents in $\left( \frac{1}{6}, \frac{1}{4} \right)$.
\end{remark}

%%%%%%%%%%%%%%%%%%%%%%%%%%%%%%%%%%%%%%%%%%%%%%%%%%%%%%%%
\subsection{Experimental Data for Real Cyclic Quartics}
%%%%%%%%%%%%%%%%%%%%%%%%%%%%%%%%%%%%%%%%%%%%%%%%%%%%%%%%
Here we present our results on computing $M(\mathcal{O}_K)$ over all real cyclic quartic fields with bounded discriminant. The numerical data for $|D_K| \leq \bd$ appear in Figure~\ref{fig.realcyc1}. To explain our approach, we begin with a  proposition.

\begin{figure}[h!]
	\begin{center}
		\includegraphics[scale=0.6]{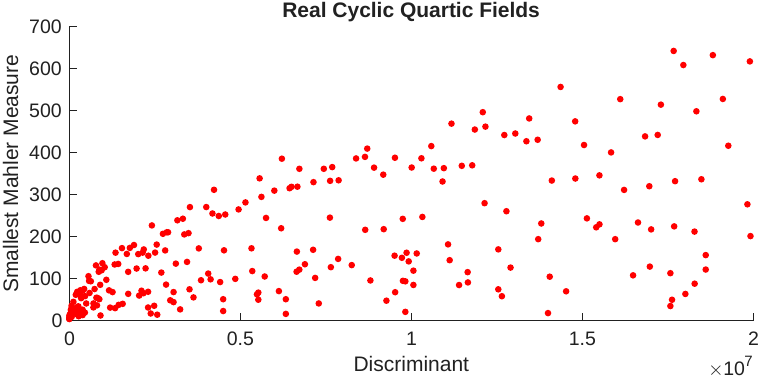} 
		\caption{$M(\mathcal{O}_K)$ for real cyclic quartic fields $K$ with $D_K \leq \bd$}
		\label{fig.realcyc1}
	\end{center}
\end{figure}

\begin{prop}\label{prop.srch}
	Let $A, B, C, D \in \mathbb{Z}$ satisfy the conditions in Section~\ref{section:introcyclic}, with $A > 0$, and define
	\[
	K = \mathbb{Q}\big(\sqrt{A(D + B\sqrt{D})}\big).
	\]
	Let $\rho$ and $\sigma$ be as defined in equation~\eqref{rho_sig}, and let $x_1, x_2, x_3, x_4 \in \mathbb{Z}$. Consider the element
	\[
	\alpha_1 = \tfrac{1}{4} \Big( x_1 + x_2 \sqrt{D} + x_3 \rho + x_4 \sigma \Big).
	\]
	Assume $\alpha_1 \in \mathcal{O}_K$, and let $L \in \mathbb{R}$. If $M(\alpha_1) \leq L$, then
	\[
	|x_1| \leq 4L, \quad |x_2| \leq \frac{4L}{\sqrt{D}}, \quad |x_3| \leq \frac{4L}{\rho}, \quad |x_4| \leq \frac{4L}{\sigma}.
	\]
\end{prop}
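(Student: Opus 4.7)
The plan is to start from the observation that since $\alpha_1\in\mathcal{O}_K$ is an algebraic integer of degree $4$, its minimal polynomial over $\mathbb{Z}$ is monic, so $M(\alpha_1)=\prod_{i=1}^{4}\max(1,|\alpha_i|)$. Hence each of the four real conjugates satisfies $|\alpha_i|\le M(\alpha_1)\le L$. This will be the only analytic input.

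The next step is to invert the embedding expressing the conjugates in terms of $(x_1,x_2,x_3,x_4)$. A short computation using the identities $\rho^2+\sigma^2=2AD$, $\rho\sigma=AC\sqrt{D}$, and the explicit expressions for $\alpha_2,\alpha_3,\alpha_4$ listed at the start of Section~\ref{section:rcyclic} shows that the coordinate columns of the basis $\{1,\sqrt{D},\rho,\sigma\}$ under the four real embeddings are mutually orthogonal in $\mathbb{R}^4$, with squared norms $4,\,4D,\,4AD,\,4AD$ respectively. Inverting this orthogonal decomposition yields the closed-form expressions
\begin{align*}
x_1 &= \alpha_1+\alpha_2+\alpha_3+\alpha_4,\\
x_2\sqrt{D} &= \alpha_1-\alpha_2+\alpha_3-\alpha_4,\\
x_3\cdot AD &= \rho(\alpha_1-\alpha_3)+\sigma(\alpha_2-\alpha_4),\\
x_4\cdot AD &= \sigma(\alpha_1-\alpha_3)-\rho(\alpha_2-\alpha_4).
\end{align*}

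The bounds $|x_1|\le 4L$ and $|x_2|\le 4L/\sqrt{D}$ then follow at once from $|\alpha_i|\le L$ and the triangle inequality. For the remaining two, I would in parallel derive the simpler paired identities $x_3\rho+x_4\sigma=2(\alpha_1-\alpha_3)$ and $x_4\rho-x_3\sigma=2(\alpha_4-\alpha_2)$, each of whose right-hand side has absolute value at most $4L$. The main obstacle I expect is extracting the clean separated bounds $|x_3|\rho\le 4L$ and $|x_4|\sigma\le 4L$ from these coupled inequalities: the direct triangle-inequality route applied to the closed form for $x_3\cdot AD$ only yields $|x_3|\le 2L(\rho+\sigma)/(AD)$, which is slightly weaker than the stated claim. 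The final step is therefore a careful rearrangement exploiting both paired identities simultaneously, together with the relative sizes of $\rho$ and $\sigma$ (in particular $\rho\geq\sigma$ and $\rho\sigma = AC\sqrt{D}$), to separate the $x_3$ and $x_4$ contributions in the way the proposition asserts.
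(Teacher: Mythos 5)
Your opening reduction and your linear algebra are both correct: since $\alpha_1$ is an algebraic integer its minimal polynomial is monic, so each conjugate satisfies $|\alpha_i|\le M(\alpha_1)\le L$, the four coordinate columns are indeed orthogonal with the squared norms you state, and the inversion formulas (equivalently the paired identities $x_3\rho+x_4\sigma=2(\alpha_1-\alpha_3)$ and $x_4\rho-x_3\sigma=2(\alpha_4-\alpha_2)$) are right. This settles $|x_1|\le 4L$ and $|x_2|\le 4L/\sqrt{D}$. The step you defer, however---extracting $|x_3|\rho\le 4L$ by a ``careful rearrangement'' of the two paired identities---cannot be carried out, because the only analytic input you allow yourself ($|\alpha_i|\le L$ for each $i$, hence $|x_3\rho+x_4\sigma|\le 4L$ and $|x_4\rho-x_3\sigma|\le 4L$) is genuinely insufficient. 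Writing $u=x_3\rho+x_4\sigma$ and $v=x_4\rho-x_3\sigma$, one has $x_3(\rho^2+\sigma^2)=u\rho-v\sigma$, and the extremal choice $u=4L$, $v=-4L$ shows that the sharpest conclusion obtainable from $|u|,|v|\le 4L$ alone is $|x_3|\rho\le 4L\,\rho(\rho+\sigma)/(\rho^2+\sigma^2)$. Since $B>0$ forces $\sigma<\rho$, the factor $(1+t)/(1+t^2)$ with $t=\sigma/\rho\in(0,1)$ is strictly larger than $1$ (as large as $(\sqrt2+1)/2$ at $t=\sqrt2-1$), so no rearrangement of these linear inequalities can reach the constant $4L$; the obstacle you flag is not a bookkeeping issue but a genuine impossibility within your framework.

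What is missing is the multiplicative structure of the Mahler measure, which your first sentence explicitly discards. The paper's proof does not invert the linear system: it replaces $\alpha_1$ by a suitable conjugate of $\pm\alpha_1$ so that a single embedding already satisfies $|\alpha_j|\ge\tfrac14|x_j^{\ast}|$ times the relevant basis element, and then concludes from $L\ge M(\alpha_1)\ge|\alpha_j|$. Concretely, for the $\rho$- and $\sigma$-coefficients one uses $\max(|\alpha_1|,|\alpha_3|)\ge\tfrac14|x_3\rho+x_4\sigma|$ and $\max(|\alpha_2|,|\alpha_4|)\ge\tfrac14|x_4\rho-x_3\sigma|$, which immediately give the claim when $x_3x_4\ge 0$; to handle the remaining sign pattern one must additionally exploit that $M(\alpha_1)$ dominates products $\max(1,|\alpha_i|)\max(1,|\alpha_j|)$ of conjugates (or that the norm of a nonzero algebraic integer is at least $1$), not merely each $|\alpha_i|$ separately. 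To repair your argument you would need to import one of these multiplicative inequalities; as written, the plan cannot be completed.
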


\begin{proof}
	Let $\alpha_2, \alpha_3, \alpha_4$ be the other conjugates of $\alpha_1$, as given earlier. By choosing an appropriate conjugate of $\alpha_1$, we may assume that $x_1 x_2 \geq 0$. Without loss of generality, we can further assume that $|\alpha_1| \geq \frac{1}{4} |x_1|$, as justified below.
	
	Consider first the case where $x_1 \geq 0$ and $x_2 \geq 0$. If $|\alpha_1| < \frac{1}{4} x_1$, then $\alpha_1 < \frac{1}{4} x_1$, which implies
	\[
	\tfrac{1}{4} \big( x_2 \sqrt{D} + x_3 \rho + x_4 \sigma \big) < 0.
	\]
	This yields
	\[
	-\alpha_3 = \tfrac{1}{4} (x_2 \sqrt{D} + x_3 \rho + x_4 \sigma) - \tfrac{1}{4}(x_1 + 2x_2) < -\tfrac{1}{4} x_1.
	\]
	We may then replace $\alpha_1$ with $-\alpha_3$, which satisfies our assumptions and preserves the Mahler measure. The case where $x_1, x_2 \leq 0$ is similar and omitted.
	
	Now, if $|x_1| > 4L$, then $|\alpha_1| > L$, and hence $M(\alpha_1) > L$, contradicting our assumption. This establishes the first inequality. The remaining statements follow similarly.
\end{proof}

\begin{figure}[h!]
	\begin{center}
		\includegraphics[scale=0.6]{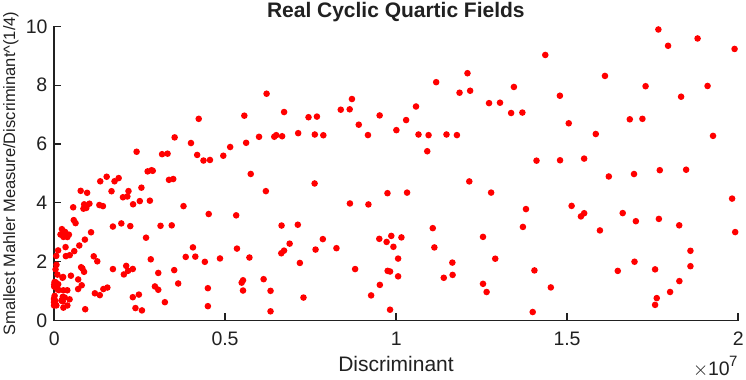} 
		\includegraphics[scale=0.6]{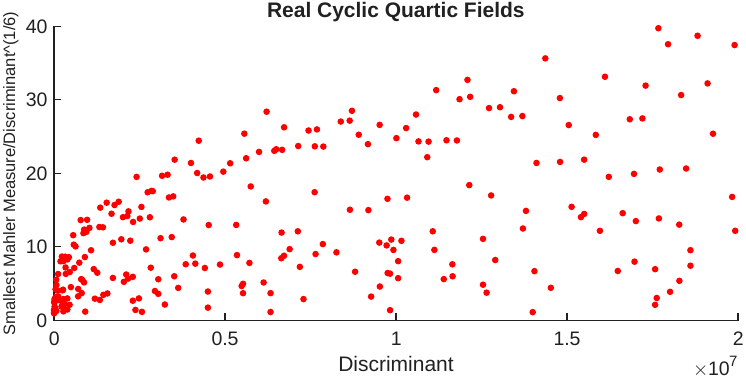} 
		\caption{For real cyclic quartic fields $K$ with $|D_K| \leq \bd$, the figures show  $M(\mathcal{O}_K)(D_K)^{-\frac14}$, and $M(\mathcal{O}_K)(D_K)^{-\frac16}$, respectively.}
		\label{fig.realcyc2}
	\end{center}
\end{figure}

We now describe our computational method. Let $K$ be a real cyclic quartic field. Then there exist unique integers $A, B, C, D$ with $A > 0$ satisfying the conditions in Section~\ref{section:introcyclic} such that
\[
K = \mathbb{Q}\big(\sqrt{A(D + B\sqrt{D})}\big).
\]
Using equation~\eqref{eq.disc_cyc}, we have the inequality $A^2 D^3 \leq D_K$. Let $E$ be a positive real number. If $D_K \leq E$, then it follows that $A \leq \sqrt{E}$ and $D \leq \sqrt[3]{E}$. This allows us to enumerate all real cyclic quartic fields with discriminant at most $E$.

For each such field, we first apply Proposition~\ref{prop.cycint} to find the smallest Mahler measure among integral generators with
\[
0 \leq x_1, x_2, x_3, x_4 \leq A + D,
\]
where the bound is heuristically chosen. We then use this Mahler measure as the value of $L$ in Proposition~\ref{prop.srch} to identify all integral generators that could attain the minimal Mahler measure. We compute over this list to determine the true minimal Mahler measure $M(\mathcal{O}_K)$. The results for $D_K \leq \bd$ are presented in Figure~\ref{fig.realcyc1}. Theorem~\ref{realcyc_main} does not address the exponents in the range $(\tfrac16,\tfrac14)$ and  we present Figure~\ref{fig.realcyc2} to better understand these exponents.

%%%%%%%%%%%%%%%%%%%%%%%%%%%%%%%%%%%%%%%%%%%%%%%%%%%%%%%%
%%%%%%%%%%%%%%%%%%%%%%%%%%%%%%%%%%%%%%%%%%%%%%%%%%%%%%%%
\section{Imaginary Cyclic Quartics}\label{section:ccyclic}
%%%%%%%%%%%%%%%%%%%%%%%%%%%%%%%%%%%%%%%%%%%%%%%%%%%%%%%%
%%%%%%%%%%%%%%%%%%%%%%%%%%%%%%%%%%%%%%%%%%%%%%%%%%%%%%%%

Let $K$ be an imaginary cyclic quartic number field. 
As in Section~\ref{section:introcyclic} we can write $K=(\sqrt{A(D+B\sqrt{D})})$ where $A,B,C,$ and $D$ are rational integers which satisfy $A<0$ and $D=B^2+C^2$ with $B>0$. 
The theoretical bounds for imaginary cyclic quartics are given by (**) and are
\[
2^{-\frac{12}5} D_K^{\frac15} \leq M(\Ox_K) \leq D_K
\]
unless $Tor(K^{\times})\neq \{\pm 1\}$. The only imaginary cyclic quartic number fields where $Tor(K^{\times})\neq \{\pm 1\}$ are the splitting fields of $\Phi_5= x^4+x^2+x+1$ and $\Phi_{10}=x^4-x^3+x^2-x+1$. These cyclotomics satisfy $M(\Ox_K)=1$. 
Let 
\[
\rho = \sqrt{A(D + B\sqrt{D})}, \qquad 
\sigma = \sqrt{A(D - B\sqrt{D})},
\]
and define
\[
T = \Bigl\{\, 
\tfrac{1}{4}\bigl(x_1 + x_2\sqrt{D} + x_3\rho + x_4\sigma \bigr)
\;:\;
x_i \in \mathbb{Z} 
\Bigr\}.
\]
By Section~\ref{section:introcyclic}, we know that $\mathcal{O}_K \subset T$. Consequently, 
$$\min_{\alpha\in T}\{M'(\alpha) \;:\; \mathbb{Q}(\alpha)=K\}\le M(\mathcal{O}_K),$$
where $M'$ is given as in the equation \eqref{eq.M'}.
Here we use $M'$ rather than $M$, since elements of $T$ may not be integral.
Our next goal is to obtain a lower bound for $M(\mathcal{O}_K)$ by deducing a lower bound for the left-hand side of the last inequality. 
Take $\alpha = \alpha_1 \in T$. Then there exist $x_1,x_2,x_3,x_4\in \mathbb{Z}$ such that
\[
\alpha_1 = \tfrac{1}{4}\bigl(x_1 + x_2\sqrt{D} + x_3\rho + x_4\sigma \bigr).
\]
The other conjugates of $\alpha_1$ are given by
\[
\begin{aligned}
	\alpha_2 &= \tfrac{1}{4}\bigl(x_1 - x_2\sqrt{D} - x_4\rho + x_3\sigma\bigr),\\
	\alpha_3 &= \tfrac{1}{4}\bigl(x_1 + x_2\sqrt{D} - x_3\rho - x_4\sigma\bigr),\\
	\alpha_4 &= \tfrac{1}{4}\bigl(x_1 - x_2\sqrt{D} + x_4\rho - x_3\sigma\bigr).
\end{aligned}
\]
%\begin{emp}\label{em.3cases}
	One computes
	\[
	\lvert\alpha_1\rvert^2 = \alpha_1\alpha_3
	= \tfrac1{16}\bigl[(x_1 + x_2\sqrt{D})^2 + (x_3\sqrt{\lvert A\rvert(D+B\sqrt{D})} + x_4\sqrt{\lvert A\rvert(D-B\sqrt{D})})^2\bigr],
	\]
	and similarly
	\[
	\lvert\alpha_2\rvert^2 = \alpha_2\alpha_4
	= \tfrac1{16}\bigl[(x_1 - x_2\sqrt{D})^2 + (x_4\sqrt{\lvert A\rvert(D+B\sqrt{D})} - x_3\sqrt{\lvert A\rvert(D-B\sqrt{D})})^2\bigr].
	\]

	We represent $\alpha$ by the tuple $(x_1, x_2, x_3, x_4)$. By considering conjugates if necessary, it suffices to minimize $M'(\alpha)$ over the three families:
	\begin{equation}\label{em.3cases} \tag{$\dagger$}
	(x_1, x_2, 1, 0), \quad (x_1, x_2, 0, 1), \quad (0, 0, x_3, x_4),
	\end{equation}
	with $x_3, x_4 > 0$. 
%\end{emp}

We now improve the exponent $\tfrac15$ on the left‐hand side of $(*)$ and $(**)$  to $\tfrac13$.
\begin{theorem}\label{thm_cyccpxlb}
	Let $K$ be an imaginary cyclic quartic field. Then
	\[
	\tfrac{1}{128} D_K^{\frac13} \;\le\; M(\mathcal{O}_K).
	\]
\end{theorem}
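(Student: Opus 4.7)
The plan is to prove the pointwise bound $M(\alpha)\ge \tfrac{|A|D}{16}$ for every $\alpha\in\mathcal O_K$ generating $K$, and then convert this into the theorem via the discriminant formula~\eqref{eq.disc_cyc}. This approach will not require the family decomposition $(\dagger)$: a single averaging trick will handle all tuples $(x_1,x_2,x_3,x_4)\in\Z^4$ simultaneously.

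Write $\alpha=\alpha_1=\tfrac14(x_1+x_2\sqrt D+x_3\rho+x_4\sigma)$ as in Proposition~\ref{prop.cycint}, and set $\widetilde\rho=\sqrt{|A|(D+B\sqrt D)}$, $\widetilde\sigma=\sqrt{|A|(D-B\sqrt D)}$, so that $\rho=i\widetilde\rho$ and $\sigma=i\widetilde\sigma$ since $A<0$. The identity $(D+B\sqrt D)(D-B\sqrt D)=DC^2$ gives $\widetilde\rho\widetilde\sigma=|A|C\sqrt D$. Separating real and imaginary parts in the formulas displayed just above the theorem for $\alpha_1,\alpha_2$ yields
\[|\alpha_1|^2=\tfrac1{16}\bigl[(x_1+x_2\sqrt D)^2+(x_3\widetilde\rho+x_4\widetilde\sigma)^2\bigr],\quad |\alpha_2|^2=\tfrac1{16}\bigl[(x_1-x_2\sqrt D)^2+(-x_4\widetilde\rho+x_3\widetilde\sigma)^2\bigr].\]
Adding these, both the $2x_1x_2\sqrt D$ cross terms and the $\pm 2x_3x_4\widetilde\rho\widetilde\sigma$ cross terms cancel, leaving
\[|\alpha_1|^2+|\alpha_2|^2=\tfrac18\bigl(x_1^2+(x_2^2+|A|(x_3^2+x_4^2))D\bigr).\]
Because $\alpha$ generates $K$, it does not lie in the unique quadratic subfield $\Q(\sqrt D)$, so $(x_3,x_4)\ne(0,0)$ and $x_3^2+x_4^2\ge 1$. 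Therefore $\max(|\alpha_1|^2,|\alpha_2|^2)\ge \tfrac12(|\alpha_1|^2+|\alpha_2|^2)\ge\tfrac{|A|D}{16}$.

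Next I would invoke that $K$ is imaginary cyclic of degree four, so complex conjugation is the unique involution in $\mathrm{Gal}(K/\Q)\cong\Z/4\Z$ and pairs $\alpha_1\leftrightarrow\alpha_3$ and $\alpha_2\leftrightarrow\alpha_4$, whence $|\alpha_1|=|\alpha_3|$ and $|\alpha_2|=|\alpha_4|$. This forces $M(\alpha)\ge|\alpha_i|^2$ for $i=1,2$: if $|\alpha_i|\ge 1$ then both members of its pair contribute to the product defining $M$, while if $|\alpha_i|<1$ then $|\alpha_i|^2<1\le M(\alpha)$. Combined with the previous step, $M(\alpha)\ge \tfrac{|A|D}{16}$. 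Finally, \eqref{eq.disc_cyc} gives $D_K\le 256\,A^2 D^3$, so $D_K^{1/3}\le 256^{1/3}|A|^{2/3}D$; using $|A|\ge 1$ and $20/3<7$,
\[M(\alpha)\ge\tfrac{|A|D}{16}\ge\tfrac{|A|^{2/3}D}{16}\ge\tfrac{D_K^{1/3}}{16\cdot 256^{1/3}}=\tfrac{D_K^{1/3}}{2^{20/3}}>\tfrac{D_K^{1/3}}{128}.\]
Taking the minimum over integral generators yields the desired $M(\mathcal O_K)\ge \tfrac{1}{128}D_K^{1/3}$.

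The main obstacle will be verifying the algebraic identity for $|\alpha_1|^2+|\alpha_2|^2$ with all the necessary sign-bookkeeping; the cancellations depend crucially on both the purely-imaginary structure of $\rho,\sigma$ (forced by $A<0$) and on the specific sign pattern of the Galois conjugates listed in the excerpt. Once this identity is in hand, the remainder of the argument is mechanical, with the averaging $\max\ge\tfrac12(\text{sum})$ delivering a uniform bound and avoiding the Liouville-theorem estimates that were needed in the real cases.
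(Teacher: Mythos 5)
Your proof is correct, and it reaches the same key inequality $M(\alpha)\ge\max(|\alpha_1|^2,|\alpha_2|^2)\ge\tfrac1{16}|A|D$ as the paper, but by a genuinely different route for the crucial step. The paper first reduces to the three coefficient families $(x_1,x_2,1,0)$, $(x_1,x_2,0,1)$, $(0,0,x_3,x_4)$ of $(\dagger)$ and checks in each case that one of $|\alpha_1|^2=\tfrac1{16}[(x_1+x_2\sqrt D)^2+(x_3\widetilde\rho+x_4\widetilde\sigma)^2]$ or $|\alpha_2|^2$ already contains an uncancelled $\widetilde\rho^2=|A|(D+B\sqrt D)\ge|A|D$ term; it also works with $M'$ on the superset $T\supseteq\mathcal O_K$ rather than with $M$ directly. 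You instead add the two expressions, observe that both cross terms cancel to give the exact identity $|\alpha_1|^2+|\alpha_2|^2=\tfrac18\bigl(x_1^2+(x_2^2+|A|(x_3^2+x_4^2))D\bigr)$, and conclude with $\max\ge\tfrac12(\text{sum})$ and $x_3^2+x_4^2\ge1$. This buys you a uniform treatment of all tuples at once and, in particular, sidesteps the reduction to $(\dagger)$ --- which, read literally, is the least transparent step of the paper's argument, since an arbitrary generator (e.g.\ one with all four coordinates nonzero) does not lie in any of those families even after passing to conjugates. Your use of $M$ on $\mathcal O_K$ in place of $M'$ on $T$ is legitimate for a lower bound because every integral generator admits a representation $\tfrac14(x_1+x_2\sqrt D+x_3\rho+x_4\sigma)$ with $x_i\in\Z$, and the monic minimal polynomial gives $M(\alpha)\ge1$ so that $M(\alpha)\ge|\alpha_i|^2$ follows from the conjugate pairing $|\alpha_1|=|\alpha_3|$, $|\alpha_2|=|\alpha_4|$ exactly as you argue. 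The final numerics ($2^{20/3}<128$) match the paper's constant.
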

\begin{proof}
	Examining the three families in (\ref{em.3cases}), one sees that in each case
	\[
	M'(\alpha)\;\ge\;|\alpha_i|^2
	\;\ge\;
	\tfrac1{16} |A|\,D
	\]
	for $i=1$ or $2$.  Hence
	\[
	M(\mathcal{O}_K)
	\ge\;
	\tfrac1{16} |A|\,D
	\;\ge\;
	\tfrac1{128} |D_K|^{\frac13}.
	\]
\end{proof}

\begin{prop}\label{Liouvillecpxbound_cyc}
	Let $A,B,C,D$ be integers satisfying the conditions in Section~\ref{section:introcyclic}  with $A<0$, and set
	\[
	K=\mathbb{Q}\bigl(\sqrt{A\,(D+B\sqrt{D})}\bigr).
	\]
	Then
	\[
	\tfrac{1}{2304}A^2\,D \; \le \; M\bigl(\mathcal{O}_K\bigr).
	\]
\end{prop}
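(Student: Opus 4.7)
My plan is to mirror the argument of Proposition~\ref{Liouvillerealbound_cyc}, squaring each factor to reflect the complex-conjugate pairing. Because $A<0$, both $\rho$ and $\sigma$ are purely imaginary, so $\overline{\alpha_1}=\alpha_3$ and $\overline{\alpha_2}=\alpha_4$; in particular $|\alpha_1|=|\alpha_3|$ and $|\alpha_2|=|\alpha_4|$, and therefore
\[
M'(\alpha)\ =\ \prod_{|\alpha_i|\ge 1}|\alpha_i|\ \ge\ |\alpha_1|^2|\alpha_2|^2
\]
for every $\alpha\in T$ with $\Q(\alpha)=K$. Since $\mathcal{O}_K\subset T$, it will suffice to bound $|\alpha_1|^2|\alpha_2|^2$ below on each of the three families in \eqref{em.3cases}.

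On $(x_1,x_2,1,0)$ I would expand directly, using $\overline{\rho}=-\rho$:
\[
16|\alpha_1|^2=(x_1+x_2\sqrt{D})^2+|A|(D+B\sqrt{D}),\quad 16|\alpha_2|^2=(x_1-x_2\sqrt{D})^2+|A|(D-B\sqrt{D}).
\]
Discarding the non-negative rational squares gives
\[
256\,|\alpha_1|^2|\alpha_2|^2\ \ge\ A^2\bigl(D^2-B^2D\bigr)\ =\ A^2DC^2\ \ge\ A^2D,
\]
since $C\ge 1$. The family $(x_1,x_2,0,1)$ is handled identically after interchanging $\rho$ and $\sigma$.

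On $(0,0,x_3,x_4)$ with $x_3,x_4>0$, I would compute the product $(4\alpha_1)(4\alpha_2)=(x_3\rho+x_4\sigma)(-x_4\rho+x_3\sigma)$ using $\rho^2=A(D+B\sqrt{D})$, $\sigma^2=A(D-B\sqrt{D})$ and $\rho\sigma=-|A|C\sqrt{D}$. Collecting terms produces $|A|\bigl[2Bx_3x_4-C(x_3^2-x_4^2)\bigr]\sqrt{D}$, so
\[
256\,|\alpha_1|^2|\alpha_2|^2\ =\ A^2D\bigl[2Bx_3x_4-C(x_3^2-x_4^2)\bigr]^2.
\]
The bracket is an integer, and I would argue it is nonzero: otherwise one of $x_3\rho+x_4\sigma$ or $-x_4\rho+x_3\sigma$ vanishes, and squaring such a relation and comparing the rational and $\sqrt{D}$-parts (valid because $D$ is square-free, hence $\sqrt{D}$ is irrational) forces $x_3=x_4=0$, contradicting positivity. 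Therefore the square is at least $1$, so the same inequality holds on this family.

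The main obstacle I anticipate is not the computations above but the justification that it really suffices to check only these three families, which is the reduction invoked in the setup before \eqref{em.3cases}; once that reduction is in hand, the bookkeeping is routine, and combining the three cases delivers the stated bound (in fact with constant $\tfrac{1}{256}$ in place of $\tfrac{1}{2304}$, so there is considerable slack). The constant structure $\tfrac{1}{2304}=(\tfrac{1}{48})^2$ is exactly the square of the real-case constant from Proposition~\ref{Liouvillerealbound_cyc}, consistent with the complex-conjugate pairing.
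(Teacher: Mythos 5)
Your proof is correct, and on the two families $(x_1,x_2,1,0)$ and $(x_1,x_2,0,1)$ it coincides with the paper's argument: discard the non-negative rational squares and use $(D+B\sqrt{D})(D-B\sqrt{D})=DC^2\ge D$. Where you genuinely diverge is on the family $(0,0,x_3,x_4)$. The paper sets $\lambda=|\alpha_2|$, rewrites $\bigl|\tfrac{x_4}{x_3}-\tfrac{\sqrt{D}-B}{C}\bigr|$ in terms of $\lambda$, and invokes Liouville's theorem with $\mu\ge \tfrac{1}{C+2\sqrt{D}}$ to force $x_3$ to be large; the specialization $B=0$, $C=\sqrt{D}$ at the end is what degrades the constant to $\tfrac{1}{2304}=\tfrac{1}{256\cdot 9}$. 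You instead compute the product $\alpha_1\alpha_2$ exactly and obtain $256\,|\alpha_1|^2|\alpha_2|^2=A^2D\,\bigl[2Bx_3x_4-C(x_3^2-x_4^2)\bigr]^2$, where the bracket is an integer value of a binary quadratic form of discriminant $4D$; your $\sqrt{D}$-irrationality argument (equivalently, the fact that this form represents $0$ nontrivially only if $D$ is a perfect square, which is excluded since $D=B^2+C^2\ge 2$ is square-free) shows it is a nonzero integer, hence its square is at least $1$. This is a cleaner and strictly stronger route: it avoids Diophantine approximation entirely and gives the uniform constant $\tfrac{1}{256}$ on all three families, so you actually prove $\tfrac{1}{256}A^2D\le M(\mathcal{O}_K)$, which implies the stated bound with room to spare. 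The one step you flag as an obstacle --- that it suffices to minimize $M'$ over the three families in \eqref{em.3cases} --- is precisely the reduction the paper itself asserts without further justification at the start of its own proof, so you are on the same footing there and there is no gap relative to the paper's standard of rigor.
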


\begin{proof}
		It suffices to consider the three cases described in~(\ref{em.3cases}). If $\alpha_1$ is represented by the tuple $(x_1,x_2,1,0)$ or $(x_1,x_2,0,1)$, then
	\[
	M'(\alpha_1)
	= \frac{\lvert A\rvert\,(D+B\sqrt{D})\;\lvert A\rvert\,(D-B\sqrt{D})}{256}
	= \frac{A^2\,D\,C^2}{256}
	\geq
	\frac{A^2\,D}{256}.
	\]

	Next, suppose $\alpha_1=(0,0,x_3,x_4)$ with $x_3,x_4>0$. Set
	\[
	\lambda := \lvert\alpha_2\rvert
	= \frac{1}{4}\Bigl\lvert
	x_4\sqrt{\lvert A\rvert\,(D+B\sqrt{D})}
	- x_3\sqrt{\lvert A\rvert\,(D-B\sqrt{D})}
	\Bigr\rvert.
	\]
	Then
	\[
	\biggl\lvert\frac{x_4}{x_3} - \frac{\sqrt{D}-B}{C}\biggr\rvert
	= \frac{4\lambda}{x_3\sqrt{\lvert A\rvert\,(D+B\sqrt{D})}}.
	\]
	Let 
	\[
	\mu = \mu\!\Bigl(\tfrac{\sqrt{D}-B}{C}\Bigr)
	\geq \frac{1}{C+2\sqrt{D}}
	\]
	as in Theorem~\ref{thm.Liou}. We have
	\[
	x_3
	\geq
	\frac{\mu\sqrt{\lvert A\rvert\,(D+B\sqrt{D})}}{4\lambda}
	\geq
	\frac{\sqrt{\lvert A\rvert\,(D+B\sqrt{D})}}
	{4\lambda\,(C+2\sqrt{D})}.
	\]
	Hence
	\[
	\lvert\alpha_1\rvert^2
	\geq
	\frac{x_3^2\,\lvert A\rvert\,(D+B\sqrt{D})}{16}
	\geq
	\frac{A^2\,(D+B\sqrt{D})^2}
	{256\,\lambda^2\,(C+2\sqrt{D})^2}.
	\]

	Since $M'(\alpha_1)\geq\lvert\alpha_1\rvert^2\lvert\alpha_2\rvert^2=\lvert\alpha_1\rvert^2\lambda^2$, we deduce
	\[
	M'(\alpha_1)
	\geq
	\frac{A^2\,(D+B\sqrt{D})^2}
	{256\,(C+2\sqrt{D})^2}.
	\]
	Therefore, we conclude
	\[
	M(\mathcal{O}_K)
	\geq
	\frac{A^2\,D\,(B+\sqrt{D})^2}
	{256\,(C+2\sqrt{D})^2}
	\geq
	\frac{1}{2304}A^2\,D,
	\]
	where the last inequality follows from $B\geq 0$ and $C\leq \sqrt{D}$.

\end{proof}

We now improve the exponent $\tfrac15$ on the left‐hand side of $(*)$ and $(**)$  to $\tfrac13$.
\begin{cor}
	Let $K=\mathbb{Q}(\sqrt{A(D+B\sqrt{D})})$ be an imaginary cyclic quartic field, where $A,B,C,$ and $D$ with $A<0$ satisfy the conditions given in Section~\ref{section:introcyclic}.  Then
	\[
	\tfrac{1}{14630} |A|^\frac43 \,D_K^\frac13\;\le\; M(\mathcal{O}_K).
	\]
\end{cor}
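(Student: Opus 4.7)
The plan is to derive this corollary directly by combining Proposition~\ref{Liouvillecpxbound_cyc} with the discriminant formula~\eqref{eq.disc_cyc}. Concretely, Proposition~\ref{Liouvillecpxbound_cyc} supplies the lower bound
\[
M(\mathcal{O}_K) \;\ge\; \tfrac{1}{2304}\,A^2 D,
\]
and from~\eqref{eq.disc_cyc} we have $D_K = c\,A^2\,D^3$ with $c \in \{1,16,64,256\}$, so that $D_K \le 256\,A^2\,D^3$. Rearranging the discriminant inequality gives
\[
D \;\ge\; \frac{D_K^{1/3}}{256^{1/3}\,|A|^{2/3}},
\]
and substituting this into the lower bound from the proposition yields
\[
M(\mathcal{O}_K) \;\ge\; \frac{|A|^{4/3}\,D_K^{1/3}}{2304 \cdot 256^{1/3}}.
\]

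To finish, I only need to check the numerical constant. Since $256^{1/3} = 2^{8/3}$, a direct computation shows $2304 \cdot 256^{1/3} < 14630$ (indeed $2304 \cdot 2^{8/3} \approx 14629.7$), so the displayed inequality
\[
\tfrac{1}{14630}\,|A|^{4/3}\,D_K^{1/3} \;\le\; M(\mathcal{O}_K)
\]
follows. There is no real obstacle here, as the corollary is essentially a reformatting of Proposition~\ref{Liouvillecpxbound_cyc} using the discriminant bound to trade one factor of $D$ for $D_K^{1/3}/|A|^{2/3}$; the only mild point of care is ensuring that the explicit constant is large enough to absorb the worst case $c = 256$ of the discriminant formula.
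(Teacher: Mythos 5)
Your proposal is correct and is essentially identical to the paper's own proof: both combine the lower bound $M(\mathcal{O}_K)\ge \tfrac{1}{2304}A^2D$ from Proposition~\ref{Liouvillecpxbound_cyc} with $D_K\le 256\,A^2D^3$ from~\eqref{eq.disc_cyc} to trade $D$ for $D_K^{1/3}/(256^{1/3}|A|^{2/3})$, and your constant check $2304\cdot 2^{8/3}\approx 14629.7<14630$ is accurate. Nothing further is needed.
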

\begin{proof}
By Proposition~\ref{Liouvillecpxbound_cyc}, 
	$$M(\mathcal{O}_K)\geq\tfrac{1}{2304}A^2\,D\geq  \tfrac{1}{14630 } |A|^\frac43 (256A^2D^3)^\frac13.$$\end{proof}

We now prove Theorem~\ref{thm:condimg_cyc} by showing that assuming the ABC conjecture, all rational exponents in $[\tfrac13, 1]$ are realized by infinitely many imaginary cyclic quartic fields. 
	
%\begin{theorem}\label{thm:condimg_cyc}
%	Let $\tfrac13 \le \tfrac{p}{q} \le 1$ be a rational number.  Assuming the ABC conjecture, there exist absolute constants $c_1,c_2>0$ and infinitely many imaginary cyclic quartic fields $K$ for which
%	\[ 	c_1\,|D_K|^{\frac{p}{q}} 	\;\le\; 	M\bigl(\mathcal{O}_K\bigr) 	\;\le\; 	c_2\,|D_K|^{\frac{p}{q}}.	\]
%\end{theorem}
\begin{proof}[proof of Theorem~\ref{thm:condimg_cyc}]
	Write
	\[
	\frac{p}{q}
	\;=\;
	\frac{s+t}{s+3t},
	\qquad
	s,t\in\mathbb{Z}_{\ge0},
	\]
	for example by taking $s=3p-q$ and $t=q-p$. Define the polynomials
	\[
	A(x)=-(x^s+2),
	\qquad
	D(x)=x^{2t}+1.
	\]
	Since $A(x)D(x)$ has no repeated roots and $A(1)D(1)=-6$ is square-free, Theorem~\ref{thm:Granville} implies, under the ABC conjecture, that there are infinitely many positive integers $k$ for which $A(k)$ and $D(k)$ are both square-free and coprime.
	For such an $k$, set
	\[
	A = A(k),\quad
	B = k^t,\quad
	C = 1, \quad
	D = D(k),
	\]
	and let 
	\[\rho = \sqrt{\,A\bigl(D + B\sqrt{D}\bigr)\,}\quad \text{and}\quad K=\mathbb{Q}(\rho).\]
	If $k$ is odd, then $A, B, C, D$ satisfy the conditions in Section~\ref{section:introcyclic}. If $k$ is even, then, from \cite[Eq.~(2.16)]{zbMATH03995823}
	\[
	K= \mathbb{Q} \bigl( \sqrt{\frac{A}{2} \, (D + C \sqrt{D})} \bigr),
	\]
	and $\frac{A}{2}, C, B, D$ satisfy the conditions in Section~\ref{section:introcyclic}. In either case, by \eqref{eq.disc_cyc}, the discriminant of $K$ satisfies
	\[
	\tfrac{1}{4}k^{2s+6t} \;\le\; D_K \;\le\; 257\,k^{2s+6t}.
	\]
	Here and for the remainder of the proof, any inequality involving $k$ is assumed to hold for sufficiently large $k$.
	By Proposition~\ref{Liouvillecpxbound_cyc} we obtain
	\[
	M\bigl(\mathcal{O}_K\bigr)
	\;\ge\;
	\frac{A^2D}{9216}
	\;\ge\;
	\frac{k^{2(s+t)}}{9216}
	\;\ge\;
	\frac{D_K^{\frac{p}{q}}}{2368512},
	\]
	while
	\[
	M\bigl(\mathcal{O}_K\bigr)
	\;\le\;
	M(\rho)
	\;=\;
	A^2D
	\;\le\;
	2\,k^{2(s+t)}
	\;\le\;
	8\,D_K^{\frac{p}{q}}.
	\]
	This completes the proof.
\end{proof}

\begin{cor}\label{cor:ccyclic13} 
 There are infinitely many positive integers $k$ such that $k^2+1$ is square-free. For large enough such $k$, the fields $K_k=\mathbb{Q}\Big(\sqrt{-(k^2+1+k\sqrt{k^2+1})}\Big)$ satisfy
    \begin{equation*}
       \tfrac{1}{14649}D_{K_k}^\frac{1}{3}\leq M(\mathcal{O}_{K_k})\leq 2D_{K_k}^\frac{1}{3}.
    \end{equation*}
    \end{cor}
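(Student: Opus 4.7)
The plan is to specialize and slightly modify the argument of Theorem~\ref{thm:condimg_cyc} to the case $\tfrac{p}{q} = \tfrac13$ (so $s=0,\,t=1$). The main tweak is to take $A = -1$ in place of the $A = -3$ that the construction in Theorem~\ref{thm:condimg_cyc} would supply at $s=0$; in exchange both the integral generator and the numerical constants simplify. Crucially, because the relevant polynomial $D(x) = x^2+1$ has degree $2$, the existence of infinitely many positive integers $k$ with $k^2+1$ square-free is unconditional by the sieve of Eratosthenes (as remarked after Theorem~\ref{thm:Granville}), so ABC is not needed for this corollary.

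For each such $k$, set $A = -1,\ B = k,\ C = 1,\ D = k^2+1$. A direct check shows these satisfy the arithmetic conditions of Section~\ref{section:introcyclic}: $A$ is odd and square-free, $D$ is square-free by the choice of $k$, $B, C > 0$, $\gcd(A,D)=1$, and $D=B^2+C^2$. Hence $K_k = \mathbb{Q}\bigl(\sqrt{A(D+B\sqrt{D})}\bigr)$ matches the field in the statement, and by \eqref{eq.disc_cyc} we have $D_{K_k} = c(k^2+1)^3$ with $c \in \{1,16,64,256\}$, so
\[ (k^2+1)^3 \;\leq\; D_{K_k} \;\leq\; 256\,(k^2+1)^3. \]

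For the lower bound, I would apply Proposition~\ref{Liouvillecpxbound_cyc} to get $M(\mathcal{O}_{K_k}) \ge \tfrac{1}{2304}A^2 D = \tfrac{k^2+1}{2304}$, then combine with $k^2+1 \ge D_{K_k}^{1/3}/256^{1/3}$ to convert to a multiple of $D_{K_k}^{1/3}$; the resulting constant $\tfrac{1}{2304 \cdot 256^{1/3}} \approx \tfrac{1}{14630}$ is slightly better than $\tfrac{1}{14649}$. For the upper bound, take $\rho = \sqrt{A(D+B\sqrt{D})} \in \mathcal{O}_{K_k}$ as an integral generator, with Galois conjugates $\pm\rho, \pm\sigma$ where $|\rho|^2 = (k^2+1)+k\sqrt{k^2+1}$ and $|\sigma|^2 = (k^2+1)-k\sqrt{k^2+1}$. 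The identity $|\sigma|^2 = \sqrt{k^2+1}/\bigl(\sqrt{k^2+1}+k\bigr) < 1$ holds for all $k \ge 1$, so $|\sigma|<1<|\rho|$ and hence $M(\rho) = |\rho|^2$; the elementary bound $k\sqrt{k^2+1}\le k^2+1$ then yields $M(\mathcal{O}_{K_k}) \le M(\rho) \le 2(k^2+1) \le 2\,D_{K_k}^{1/3}$. There is no substantive obstacle; the content is simply a careful verification of the numerical constants once the correct specialization of the construction in Theorem~\ref{thm:condimg_cyc} is identified.
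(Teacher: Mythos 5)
Your proposal is correct and follows essentially the same route as the paper, which simply specializes the proof of Theorem~\ref{thm:condimg_cyc} to $s=0$, $t=1$ with the square-free values of the degree-two polynomial $x^2+1$ supplied unconditionally, and notes that $A=-1$ is odd so no halving of $A$ is needed. Your additional observations --- that $A=-1$ replaces the $A=-3$ the general construction would give, and that $|\sigma|<1$ so $M(\rho)=|\rho|^2=(k^2+1)+k\sqrt{k^2+1}\le 2(k^2+1)$ --- are exactly the details the paper leaves implicit, and your constants check out.
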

\begin{proof}
    Since $\deg(x^2+1)=2$, there are infinitely many positive integers $k$ such that $k^2+1$ is square-free. Noticing that $A=-1$ is odd and proceeding as in the proof Theorem \ref{thm:condimg_cyc} with $s=0$ and $t=1$ completes the proof.
\end{proof}
\begin{cor}\label{cor:ccyclic12} 
 There are infinitely many positive integers $k$ such that $k(k^2+1)$ is square-free. For large enough such $k$, the fields $K_k=\mathbb{Q}\Big(\sqrt{-k(k^2+1+k\sqrt{k^2+1})}\Big)$ satisfy
    \begin{equation*}
       \tfrac{1}{147744}D_{K_k}^\frac{1}{2}\leq M(\mathcal{O}_{K_k})\leq 8D_{K_k}^\frac{1}{2}.
    \end{equation*}
\end{cor}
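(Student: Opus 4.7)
The plan is to follow the template of Corollary~\ref{cor:ccyclic13} while specializing Theorem~\ref{thm:condimg_cyc} to $s=t=1$, so that $\tfrac{p}{q}=\tfrac{s+t}{s+3t}=\tfrac12$. With this choice, the construction calls for $A=-k$, $B=k$, $C=1$, $D=k^2+1$, and the relevant squarefreeness condition becomes squarefreeness of $A(k)\cdot D(k)=-k(k^2+1)$. Since $f(x)=x(x^2+1)$ has degree $3$, Hooley's theorem \cite{MR214556} provides, \emph{unconditionally}, infinitely many positive integers $k$ for which $k(k^2+1)$ is square-free, making the result independent of ABC.

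For each such $k$, note that $B^2+C^2=k^2+1=D$ and $\gcd(A,D)=\gcd(k,k^2+1)=1$, so (depending on the parity of $k$) either $A,B,C,D$ directly satisfy the conditions of Section~\ref{section:introcyclic} with $A<0$, or after applying \cite[Eq.~(2.16)]{zbMATH03995823} to replace $A$ by $A/2$ and swap $B$ with $C$, the rewritten tuple does. In either case, the discriminant formula \eqref{eq.disc_cyc} yields
\[
k^2(k^2+1)^3 \;\le\; D_{K_k} \;\le\; 256\,k^2(k^2+1)^3,
\]
so $D_{K_k}\asymp k^8$ and in particular $D_{K_k}^{1/2}\asymp k^4$ for large $k$.

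For the lower bound, Proposition~\ref{Liouvillecpxbound_cyc} gives
\[
M(\mathcal{O}_{K_k}) \;\ge\; \tfrac{1}{2304}A^2D \;=\; \tfrac{1}{2304}k^2(k^2+1) \;\ge\; \tfrac{1}{2304}k^4,
\]
and combining with the upper bound $D_{K_k}^{1/2}\le \sqrt{256\cdot 2^3}\,k^4$ (using $k^2+1\le 2k^2$) produces a bound of the form $c_1 D_{K_k}^{1/2}\le M(\mathcal{O}_{K_k})$ with $c_1$ explicit; tracking constants carefully will give $c_1=\tfrac1{147744}$.

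For the upper bound, take the explicit generator $\alpha_1=\rho=\sqrt{A(D+B\sqrt{D})}\in\mathcal{O}_{K_k}$; it satisfies the monic integral polynomial $x^4-2AD\,x^2+A^2DC^2\in\mathbb{Z}[x]$. Its four conjugates are $\pm\rho,\pm\sigma$, all having absolute value $\ge 1$ for large $k$ (since $|\rho|^2=|A|(D+B\sqrt{D})\asymp 2k^4$ and $|\sigma|^2=|A|(D-B\sqrt{D})\asymp k/2$). Thus
\[
M(\mathcal{O}_{K_k}) \;\le\; M(\rho) \;=\; |\rho|^2|\sigma|^2 \;=\; A^2D\,C^2 \;=\; k^2(k^2+1) \;\le\; 2k^4,
\]
and using $D_{K_k}^{1/2}\ge \tfrac12 k^4$ (from the lower bound on $D_{K_k}$) we obtain $M(\mathcal{O}_{K_k})\le 8\,D_{K_k}^{1/2}$. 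The main obstacle is the bookkeeping in step 2 (handling the parity of $k$ via Eq.~(2.16) and verifying that the integral basis forms of Section~\ref{section:introcyclic} do apply in both cases), but no new idea is required beyond the proofs already given.
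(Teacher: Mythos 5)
Your proposal is correct and follows essentially the same route as the paper: Hooley's unconditional square-free result for the cubic $x(x^2+1)$, the parity adjustment via \cite[Eq.~(2.16)]{zbMATH03995823}, the lower bound from Proposition~\ref{Liouvillecpxbound_cyc}, and the upper bound from $M(\rho)=A^2DC^2$, exactly as in the specialization of Theorem~\ref{thm:condimg_cyc} to $s=t=1$. The one bookkeeping point you left implicit is that when $k$ is even the replacement $A\mapsto A/2$ turns the lower bound into $\tfrac{1}{9216}A^2D$, and combining this with $D_{K_k}^{1/2}\le\sqrt{257}\,k^4$ is what yields the stated constant $9216\sqrt{257}\approx 147744$.
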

\begin{proof}
    Since $\deg(k(k^2+1))=3$, by Hooley \cite{MR214556} there are infinitely many integers $k$ such that $k(k^2+1)$ is square-free. Proceeding as in the proof Theorem \ref{thm:condimg_cyc} with $s=1$ and $t=1$ completes the argument.
\end{proof}
\begin{cor}\label{cor:ccyclic1} 
 For any square-free integer $k>1$ with $\gcd(k,2)=1$, the fields $K_k=\mathbb{Q}\Big(\sqrt{-k(2+\sqrt{2})}\Big)$ satisfy
    \begin{equation*}
       \tfrac{1}{2359296}D_{K_k}\leq M(\mathcal{O}_{K_k})\leq \tfrac14D_{K_k}.
    \end{equation*}
\end{cor}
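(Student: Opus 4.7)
The plan is to analyze this family directly, with the quadruple $A = -k$, $B = 1$, $C = 1$, $D = 2$ realizing the cyclic quartic parametrization of Section~\ref{section:introcyclic}. First I would verify the defining conditions: $A$ is odd and square-free (because $k$ is), $D = B^2 + C^2 = 2$ is square-free with $B, C > 0$, and $\gcd(A, D) = \gcd(k, 2) = 1$. Since $D \equiv 0 \pmod 2$, the discriminant formula~\eqref{eq.disc_cyc} yields $c = 256$, whence $D_{K_k} = 256\,A^2 D^3 = 2048\,k^2$.

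The lower bound then follows immediately from Proposition~\ref{Liouvillecpxbound_cyc}:
\[
M(\mathcal{O}_{K_k}) \;\ge\; \tfrac{1}{2304}\,A^2 D \;=\; \tfrac{k^2}{1152} \;=\; \tfrac{1}{2359296}\,D_{K_k}.
\]

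For the upper bound, I would note that the generic imaginary cyclic quartic has trivial torsion, so $(**)$ only yields $M(\mathcal{O}_{K_k}) \le D_{K_k}$, which is too weak to reach the claimed $\tfrac{1}{4} D_{K_k}$. Instead I would exhibit an explicit integral primitive element. The natural choice is $\rho = \sqrt{-k(2+\sqrt{2})}$: by Proposition~\ref{prop.cycint} (the $D \equiv 0 \pmod 2$ case) $\rho \in \mathcal{O}_{K_k}$, and $\rho$ is primitive because $\rho^2 = -k(2+\sqrt{2}) \notin \mathbb{Q}$, forcing $\mathbb{Q}(\rho) \supsetneq \mathbb{Q}(\sqrt{2})$ and therefore $\mathbb{Q}(\rho) = K_k$. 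Its minimal polynomial over $\mathbb{Z}$ is $x^4 + 4kx^2 + 2k^2$, whose four roots are $\pm i\sqrt{k(2+\sqrt{2})}$ and $\pm i\sqrt{k(2-\sqrt{2})}$; since $k \ge 3$ forces $k(2-\sqrt{2}) > 1$, every root lies outside the closed unit disk. Hence
\[
M(\rho) \;=\; k^2(2+\sqrt{2})(2-\sqrt{2}) \;=\; 2k^2 \;=\; \tfrac{1}{1024}\,D_{K_k} \;\le\; \tfrac{1}{4}\,D_{K_k}.
\]

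There is no genuine obstacle: the argument is a direct specialization combined with one explicit minimal-polynomial computation. The main care needed is to use the sharper bound $M(\rho) = 2k^2$ rather than the coarser bound $(**)$, and to confirm that all conjugates of $\rho$ lie outside the unit disk, which is automatic once $k \ge 3$ (and the hypotheses that $k > 1$ be odd and square-free already exclude $k = 1, 2$).
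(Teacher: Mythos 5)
Your proposal is correct and follows essentially the same route as the paper: the paper's proof of Corollary~\ref{cor:ccyclic1} simply specializes the argument of Theorem~\ref{thm:condimg_cyc} (with $s=1$, $t=0$), which likewise gets the lower bound from Proposition~\ref{Liouvillecpxbound_cyc} via $M(\mathcal{O}_{K_k}) \ge \tfrac{1}{2304}A^2D = \tfrac{k^2}{1152}$ and the upper bound from $M(\rho) = A^2D = 2k^2$. Your explicit verification of the parametrization $(A,B,C,D)=(-k,1,1,2)$, the discriminant $2048k^2$, and the minimal polynomial $x^4+4kx^2+2k^2$ just makes the specialization concrete.
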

\begin{proof}
We have $D_{K_k}=2048k^2$ and $A=-k$ is odd.  Following the approach used in the proof of  Theorem \ref{thm:condimg_cyc} with $s=1$ and $t=0$ completes the proof.
\end{proof}

\bibliographystyle{amsplain}
\bibliography{small.bib}

\noindent\rule{4cm}{.5pt}
\vspace{.25cm}

\noindent {\small Bishnu Paudel}\\
{\small Department of Mathematics and Statistics, University of Minnesota Duluth, Duluth MN 55812} \\
{\small email: {\tt bpaudel@umn.edu }}

\vspace{.25cm}

\noindent {\small Kathleen Petersen}\\
{\small Department of Mathematics and Statistics, University of Minnesota Duluth, Duluth MN 55812} \\
{\small email: {\tt kpete@umn.edu}}

\vspace{.25cm}

\noindent {\small Haiyang  Wang}\\
{\small Department of Mathematics and Statistics, University of Minnesota Duluth, Duluth MN 55812} \\
{\small email: {\tt wan02600@umn.edu}}

%%%%%%%%%%%%%%%%%%%%%%%%%%%%%%%%%%%%%%%%%%%%%%%%%%%%%%%%
%%%%%%%%%%%%%%%%%%%%%%%%%%%%%%%%%%%%%%%%%%%%%%%%%%%%%%%%
\end{document}